\documentclass[a4paper,11pt]{article}
\usepackage[margin=3.5cm]{geometry}
\usepackage{mathtools}
\usepackage[utf8]{inputenc}
\usepackage[all]{xy}
\usepackage{float}

\usepackage{amsmath}
\usepackage{amssymb}
\usepackage{amsfonts}
\usepackage{amsthm}

\usepackage{enumerate}
\usepackage{epsfig}
\usepackage{epstopdf}
\usepackage{xcolor,fancybox,graphicx}
\usepackage{url}
\usepackage{psfrag}
\usepackage{color}
\usepackage{algorithm, algorithmic}
\usepackage{pstricks}
\usepackage{graphicx}
\usepackage{tikz}
\usetikzlibrary{calc,arrows}
\usepackage{caption}

\usepackage{stmaryrd}

\usepackage{hyperref}
\usepackage[nameinlink]{cleveref}
\usepackage{authblk}

%%% Macros

%%% new
\newcommand{\lmax}{{l_{B}}}
\newcommand{\lmin}{{l_{0}}}

\newcommand{\loss}{\textrm{Loss}}
\newcommand{\pa}[1]{\mathbf{#1}}

\newcommand{\ol}[1]{\overline{#1}}
\newcommand{\ul}[1]{\underline{#1}}

%% General commands

%% Comments

%% Usual

\renewcommand{\leq}{\leqslant}

\renewcommand{\geq}{\geqslant}

\newcommand{\eps}{\varepsilon}
\newcommand{\ph}{\varphi}
\newcommand{\e}{{\rm e}}
\renewcommand{\d}{ {\, d}}

\newcommand{\eqdef}{\overset{\textrm{def.}}{=}}
\newcommand{\one}{{\mathbf{1}}}
\newcommand{\dps}{\displaystyle}
\def \dd {\mathrm{d}}

\newcommand{\mrm}[1]{\mathrm{#1}}

%% Letters e.g.: A = bb A / calA = cal A 

\newcommand{\E}{\mathbb{E}}

\newcommand{\R}{\mathbb{R}}

\renewcommand{\P}{\mathbb{P}}

\newcommand{\calE}{\mathcal{E}}

%% Maths ops

\newcommand{\Var}{\mathbb{V}\mathrm{ar}}

\newcommand{\Id}{\mathrm{Id}}
\newcommand{\Law}{{\rm Law}}

\newcommand{\abs}[1]{\left | #1\right |}
\newcommand{\set}[1]{\left\{#1\right\}}

\newcommand{\p}[1]{ \left(#1\right) }
\renewcommand{\b}[1]{ \left [#1\right ] }

\newcommand{\norm}[1]{\left\Vert#1\right\Vert}
\newcommand{\bracket}[1]{\left \langle #1\right \rangle}

%% Arrays, matrices
 
% \newcommand{\system}[1]{\left\{ \begin{array}{rcl}#1\end{array}\right.}
% \newcommand{\larray}[1]{\left. \begin{array}{l}#1\end{array}\right.}
% \newcommand{\barr}[1]{\left. \begin{array}{#1}}
% \newcommand{\earr}{\end{array}\right.}
%\newcommand{\bmat}{\begin{pmatrix}}
%\newcommand{\emat}{\end{pmatrix}}
 
% Theorems

%\theoremstyle{plain}

\newtheorem{Ass}{Assumption}
\crefname{Ass}{Assumption}{Assumptions}

\newtheorem{The}{Theorem}[section]
\newtheorem{Lem}[The]{Lemma}

\newtheorem{Cor}[The]{Corollary}

\newtheorem{Def}[The]{Definition}

\numberwithin{equation}{section}

\newtheorem{Rem}[The]{Remark}

\title{Fluctuations of Rare Event Simulation with Monte Carlo Splitting in the Small Noise Asymptotics}
\date{}
\author[1]{Fr\'ed\'eric C\'erou}
\author[2]{Sofiane Martel}
\author[1]{Mathias Rousset}

%Mathias Rousset$^1$}

%\author[1]{Wanderer}
%\author[1]{Static}
\affil[1]{IRMAR and Inria, University of Rennes, France.}
\affil[2]{\'Ecole Nationale Des Ponts et Chauss\'ees, France.}
\begin{document}
\maketitle

\abstract{Diffusion processes with small noise conditioned to reach a target set are considered. The AMS algorithm is a Monte Carlo method that is used to sample such rare events by iteratively simulating clones of the process and selecting trajectories that have reached the highest value of a so-called importance function. In this paper, the large sample size relative variance of the AMS small probability estimator is considered. The main result is a large deviations logarithmic equivalent of the latter in the small noise asymptotics, which is rigorously derived. It is given as a maximisation problem explicit in terms of the quasi-potential cost function associated with the underlying small noise large deviations. Necessary and sufficient geometric conditions ensuring the vanishing of the obtained quantity  ('weak' asymptotic efficiency) are provided. % each level set of the importance function must contain a state which minimizes simultaneously the quasi-potential cost function i) {\em from} the initial condition, and ii) {\em to} the target rare event. Such a state belongs by construction to a large deviations optimal path. Part i) of the condition is necessary. 
Interpretations and practical consequences are discussed.}

\tableofcontents

\section{Introduction}
Let $\pa{X}^\eps = (\pa{X}^\eps_t)_{t\geq 0}$ denotes a diffusion process %taking values, say, in $E=\R^d$, 
with small noise parameter $\eps > 0$ and initial condition $\pa{X}_0=x_0$. We are interested in this paper with the simulation of rare events of the form $\set{\tau_{B}(\pa{X}^\eps) < \tau_A(\pa{X}^\eps)}$ where $\tau_B(\pa{x})$ generically denotes the first hitting time of a set $B \subset E$ by the trajectory $\pa{x}$. In the present work, the 'target' set $B$ is defined as the level-set 
$
B = \set{\xi \geq \lmax} = \set{x \in E \mid \, \xi(x) \geq \lmax} 
$
of a continuous function $ \xi: E \to \R$, and the {\em reference} set $A$ typically contains the attractors of the deterministic dynamics $(\pa{X}^0_t)_{t\geq 0}$, and, as such is a recurrent set for the process. \medskip

%\footnote{During the analysis, one will consider hitting times of either open or close sets; the topology being left unprecised when the specific topology of the considered set is unimportant}

Such problems are of primary interest in different fields within computational physics. Notable recent examples include molecular simulation (\cite{tutorialAMS,teo2016adaptive}), neutron transport (\cite{louvin2017adaptive}), and climate forecast (\cite{RagWouBou18,LesRagBre18}). In the latter references, the Monte Carlo methods chosen to perform the rare event simulation are Importance Splitting (a.k.a. Multilevel Splitting) type methods with $N$ clones ('fixed effort' algorithms), and are identical or minor variants of the algorithms studied in the present paper. Those algorithms can also be interpreted as Sequential Monte Carlo samplers as studied in~\cite{delmoral06b}, whose structure is defined by a Feynman-Kac model, leading to unbiased estimates of the rare event probability, as studied by P.~Del~Moral in~\cite{delmoral04a} for instance. \medskip

The general idea of Importance Splitting, is to simulate $N$ clones 
$$\pa{X}^{(1,j)}, \ldots, \pa{X}^{(N,j)}$$
using the dynamics of $\pa{X}^\eps$ in a sequential way, $j$ denoting the iteration parameter. At each iteration, the considered algorithm discard trajectories far away from the target set $\set{\xi \geq \lmax}$, and then do split (or branch/duplicate/clone) the trajectories heading closer to $\set{\xi \geq \lmax}$. In order to quantify the closeness to the target set those methods critically rely on the specific choice of the {\em importance function} (also called {\em reaction coordinate}) 
$ \xi: E \to \R $
on $\set{\xi < \lmax}$, the rare of event of interest depending only on the target set $B=\set{\xi \geq \lmax}$. As one may know only very little about the typical trajectories reaching $B$, the specific choice of $\xi$ is usually based on intuitions or loose qualitative knowledge. It is now accepted that this choice is the main bottleneck parameter of the efficiency of those Monte Carlo methods. An optimal theoretical choice of $\xi$ is given by the so called {\em committor function} defined by
$$
\xi^\eps_\ast(x) = \eps \log \P_{x}(\tau_{B}(\pa{X^\eps}) < \tau_{A}(\pa{X^\eps}) ),
$$  
together with $\lmax = 0$. The latter choice, which is in most cases unknown and thus practically infeasible, yields an estimator of the rare event probability with an explicit Poisson distribution and with minimal variance (see \textit{e.g.} Section~$2.2.3$ in~\cite{brehier2015analysis}). We will also use the limiting small noise committor function defined by $\xi_\ast(x)  = \lim_{\eps \to 0} \xi^\eps_\ast(x)$, that will be equal (under our technical assumptions) to $\xi_\ast(x) = U(x,B)$ where $U$ is the subsequently defined two-points quasi-potential function. \medskip 

The main variant we will consider in this work is the so-called Adaptive Multilevel Splitting (AMS) algorithm. The latter can be seen as a limit of a somehow simpler variant -- that we will call the Fixed Multilevel Splitting (FMS) method -- which is an instance of Sequential Monte-Carlo (SMC) sampling. AMS and FMS are detailed in Section~\ref{sec:algos} below. In each iteration of the AMS algorithm, the least performing clone, in terms of the maximum denoted $L$ of the importance function $\xi$ along its trajectory, is discarded; it is then replaced by the duplicate of one of the other trajectories, chosen uniformly among the $N-1$ survivors. The duplication is kept identical from the initial condition up until the first hitting time $\tau_L$ of the level set $\set{\xi \geq L}$, and the duplicated clone is then redrawn independently after $\tau_L$ using the dynamics of $\pa{X}^\eps$.\medskip

Introducing the notation
$
\tau_{l}(\pa{x}) \eqdef \tau_{\set{\xi \geq l}}(\pa{x}),
$
the estimator of the rare event probability associated with level $l$,
\[
 p^\eps_l \eqdef \P\b{ \tau_{l}(\pa{X}^\eps) < \tau_A(\pa{X}^\eps)  },
\]
is given by
\begin{equation*}%\label{eq:estim_p}
p^{N}_{l,\mrm{ams}} \eqdef (1-1/N)^{ I^{N}_{l}}
\end{equation*}
where $I^{N}_{l}$ is the random number of iterations required so that all clones have reached the target set $\set{\xi \geq l}$. The estimator $p^{N}_{l,\mrm{ams}}$
(as well as other non-normalized estimators) is unbiased
$
\E\b{p^{N}_{l,\mrm{ams}}} = p^\eps_l
$ (see \cite{cdgr3,brehier2016unbias}).
The empirical distribution of clones at iteration $I^{N}_{l}$ 
\begin{equation*}%\label{eq:estim_cond}
\eta^{N,\mrm{path}}_l = \frac1N \sum_{n=1}^N \delta_{\pa{X}^{ \p{ n,I^{N}_{l} }} },
\end{equation*}
consistently estimates the conditional distribution
$$
\eta^{N,\mrm{path}}_l \xrightarrow[N \to +\infty]{\P} \eta^{\eps,\mrm{path}}_l \eqdef \Law( \pa{X}^\eps \mid \tau_l(\pa{X}^\eps) < \tau_{A}(\pa{X}^\eps) ) .
$$
The product estimator $p^{N}_{l,\mrm{ams}} \eta^{N,\mrm{path}}_l $ is also unbiased. The convergence and asymptotic normality of all the latter estimators, when the number of clone $N$ goes to infinity, $\eps >0$ being fixed, was studied in~\cite{cdgr3} with an explicit expression of the asymptotic variance (see Section~\ref{sec:algos}). The latter is minimal when $\xi = \xi^\eps_\ast$ in which case it is given by $-(p^\eps_{l})^2\ln p^\eps_{l}$ so that the relative asymptotic variance is only logarithmic with respect to the rare event probability. \medskip 

A standard quantity assessing the efficiency of such Monte Carlo algorithms is given by the relative variance times the average computational cost, here at a logarithmic scale:
\begin{equation*}
\calE(p^{\eps,N}_{l,\mrm{ams}}) \eqdef \eps \log \p{ \E\b{ \mrm{Cost}^{\eps,N}_{l,\mrm{ams}}} \Var \b{\frac{p^{\eps,N}_{l,\mrm{ams}}}{p^\eps_l}} }.
\end{equation*}
The efficiency\footnote{Unlike in \cite{budhi-dupuis-book}, the efficiency here is renormalized by the probability of the rare event $p^\eps_l$.} $\calE$ is a simple variance-based criteria properly normalized, in order to be invariant by i) averaging over new independent runs of the full algorithm; ii) multiplication of the estimator by a constant. As pointed out in \cite{lecuyer}, a good feature of an algorithm would be to have a bounded relative variance at fixed computational cost, when the true probability goes to $0$: $p^\eps_l \to_{\eps \to 0} 0$. But this is virtually always out of reach in practical applications, and one will only seek, to the very best, a sub-exponential behavior. For this purpose we resort to the criteria $\calE$ above (adapted the small noise setting thanks to the use of a logarithmic scale). \medskip 

This efficiency criterion has to be compared to i) the behavior given by the crude Monte Carlo estimator obtained by direct simulation of $N$ independent trajectories, for which $\calE(p^{\eps,N}_{l,\mrm{crude}}) \sim_{\eps \to 0} - \eps \log p^\eps$, and ii) the {\em best} behavior, usually referred to as \emph{asymptotically efficient behavior}, for which the relative variance is sub-exponential at fixed cost, that is $\lim_{\eps \to 0} \calE(p^{\eps,N}_l) = 0$, which happens for the AMS algorithm if (but not only if, as will be proved in this work) $\xi = \xi_\ast$. \medskip

The present work is dedicated to the study when $\eps \to 0$ of
$\lim_{N \to + \infty} \calE \p{ p^{\eps,N}_{l,\mrm{ams}} },$ where we stress that the limit $N \to + \infty$ is taken first. This choice considerably simplify the still intricate analysis. We mention that it is an open problem to study the small $\eps$ limit of $\calE \p{ p^{\eps,N}_{l,\mrm{ams}} }$ at fixed $N$. The optimal efficiency obtained after taking first the limit $N \to +\infty$ and then $\eps \to 0$, that is $\lim_{\eps \to 0} \lim_{N \to + \infty} \calE(p^{\eps,N}_{l}) = 0$, will be called here \emph{weak asymptotic efficiency}. We stress that both the order $\lim_{\eps \to 0} \lim_{N \to +\infty}$ and $ \lim_{N \to +\infty} \lim_{\eps \to 0}$ are relevant for practical applications. The case studied here $\lim_{\eps \to 0}\lim_{N \to +\infty}$ is well suited to mild cases where the Monte Carlo algorithm is able to sample the neighbourhood of the least unlikely trajectory defined by the rare event. Converse cases are more difficult.  \medskip

In the specific case of the AMS (or FMC) algorithm, a single trajectory of $\pa{X}$ is refreshed at each iteration, so that it is fair to set the computational cost equal to the total number of algorithmic iterations
$\mrm{Cost}^{\eps,N}_{\lmax} =  I^{\eps,N}_{\lmax} $  (hence the appellation {\em fixed effort} algorithm). When the algorithm is convergent (which happens under mild assumptions), one has $\lim_{N \to + \infty}  I^{\eps,N}_{l} / N = -\log p_l^\eps$ in probability, and since $\log \log p^\eps \ll \log p^\eps $, one obtains that the cost is sub-exponential with respect to $\eps$ and can be removed from the definition of efficiency:
%negligible
\begin{equation} \label{eq:cost_remove}
\lim_{\eps \to 0} \lim_{N \to + \infty} \calE(p^{\eps,N}_{l,\mrm{ams}}) = \lim_{\eps \to 0} \eps \log\b{ \lim_{N \to + \infty} N%\frac1N
\Var(p^{\eps,N}_{l,\mrm{ams}}/ p_l^\eps ) }.
\end{equation}

The main result of this paper is the rigorous evaluation and interpretation of~\eqref{eq:cost_remove} under some mild technical assumptions, most prominently a Freidlin-Wentzell type uniform large deviations principle on $\pa{X}^\eps$ when $\eps \to 0$. The obtained logarithmic equivalent will be briefly summarized using~\eqref{eq:p_th}-\eqref{eq:loss1} below. The result is based on the explicit formula for the variance of $p^{\eps,N}_{l,\mrm{ams}}$ when $N \to +\infty$ recalled in Section~\ref{sec:algos}, which becomes simpler in the large deviations picture. \medskip

Introducing the notation
$
q_l^\eps(x) \eqdef \P_{\pa{X}^\eps_0=x}[\tau_{l}(\pa{X}^\eps) < \tau_A(\pa{X}^\eps) ]
$
for the probability of the rare event associated with a given level $l$ and initial condition $x$, as well as
%\begin{equation*}%\label{eq:p_th_1}
$
\eta^\eps_l \eqdef \Law \p{ \pa{X}^\eps_{\tau_l(\pa{X}^\eps)} \mid    \tau_l(\pa{X}^\eps) < \tau_A(\pa{X}^\eps)},
$
%\end{equation*}
the associated distribution of the first hitting place of $\set{\xi \geq l}$ conditioned on occurring before the hitting time of $A$. We will first remark that, for any given initial condition $\pa{X}^\eps_0=x_0$ and target level $\lmax \geq \xi(x_0)$:
\begin{equation}\label{eq:p_th}
\lim_{\eps \to 0} \eps \log\b{ \lim_{N \to + \infty} N %\frac1N 
\Var( p^{\eps,N}_{\lmax,\mrm{ams}} ) } = \sup_{l \in [\xi(x_0),\lmax]} \lim_{\eps \to 0} \eps \log \mrm{Var}_{\eta_{l}^\eps} \p{ q^\eps_{\lmax}  {p^\eps_{l}} / {p^\eps_{l}} },
\end{equation}
which exactly states that, on large deviations scales, the relative variance of the AMS estimator of interest is  given by the largest -- obtained for $l$ spanning the interval  $[\xi(x_0),\lmax]$ -- relative variance of an unbiased (theoretical) estimators of $p^\eps_{l}$. This unbiased theoretical estimator is given by
%\begin{equation*}\label{eq:p_th_bis} 
$p_{l_\ast}^\eps \, q_\eps \p{X^\eps_{l\ast}}$ where $X^\eps_{l_\ast} \sim \eta_{l_\ast}^\eps.$  We will show that critical levels $l_\ast$ do exist and belong to the open interval $]\xi(x_0),\lmax[$.%{X^\eps_l}[\tau_{\set{\xi \geq \lmax}}(\pa{X}^\eps) < \tau_A(\pa{X}^\eps) ],
%\end{equation*}
\medskip

% is a (In essence, the main result of this paper is obtained by remarking that, at large deviations scales, the relative variance of the AMS estimator is given by the relative variance of this estimator 
The formula~\eqref{eq:p_th} is interesting in order to interpret the relative variance of the AMS algorithm. We will explain in Section~\ref{sec:related_var} that on of the main contribution in the variance formula~\eqref{eq:p_th} is due to those few trajectories who have been 'lucky' when reaching $\set{\xi \geq l_\ast}$ for the first time because they have a relatively large remaining probability $q^\eps_{\lmax}$ to reach the final level set $B$ before the reference set $A$. Since the empirical distribution of clones at the first hitting time of $\set{\xi \geq l_\ast}$ is approximately $\eta_{l_\ast}^\eps$, the 'lucky' clones do mainly contribute to the $\eta_{l_\ast}^\eps(q_\eps^2)$ quantity in the variance term.\medskip

Our main result is more precise and detailed than this preliminary remark, and consists in proving that
\begin{equation}\label{eq:loss1}
 \lim_{\eps \to 0} \eps \log\b{ \lim_{N \to + \infty} N %\frac1N 
 \Var( p^{\eps,N}_{l,\mrm{ams}} / p_l^\eps) } =  \sup_{l' \in [\xi(x_0),l]}\loss(l')
\end{equation}
where the loss function $\loss(l) \geq 0$ is a non-negative upper semi-continuous function that quantifies the failure from weak asymptotic efficiency. It is explicitly given by the formula
\begin{equation}\label{eq:loss}
\loss(l) \eqdef 2 U(x_0,B) - \inf_{\set{\xi = l}} \b{U^{(l)}(x_0, \, . \, ) + 2 U( \, . \,,B) } - U(x_0,\set{\xi = l}).
\end{equation}
In the above, the notation 
$
U(C_1,C_2) = \inf_{x_1 \in C_1,x_2 \in C_2} U(x_1, x_2)
$ is systematically used, and one has introduced the so-called quasi-potential two-points continuous function $U: \R^d \times \R^d \to \R_+$; a cost function satisfying the triangle inequality $U(x,z) \leq U(x,y) + U(y,z)$ for all $x,y,z$ and defined by 
$$
U(x,y) \eqdef \inf_{\substack{T\geq 0 \\ (\pa{x}_0, \pa{x}_T)=(x,y) \\ \pa{x} \notin A}} I_{[0,T]}(\pa{x}),
$$
where $I_{[0,T]}(x)$ is the rate function associated with the large deviation principle satisfied by $( \pa{X}^\eps )_{\eps > 0}$ on each time interval $[0,T]$. In the same way,
$
U^{(l)}(x, y ) \eqdef \inf_{\substack{ (\pa{x}_0,\pa{x}_{\tau_l(\pa{x})})=(x,y) \\ \pa{x} \notin A}} I_{[0,\tau_l(\pa{x})]}(\pa{x}),
$
is a variant where the minimizing set of trajectories is constrained to take values in $\set{\xi \leq l}$.\medskip

In Section~\ref{sec:loss}, the formula~\eqref{eq:loss} will be interpreted and discussed in details, using a decomposition into an 'underestimation' part and an 'overestimation' part (this nomenclature will be explained); each part being, i) defined by an independent minimization problem involving the intermediate level sets of $\xi$, ii) non-negative. As a consequence, both are identically $0$ if and only if weak asymptotic efficiency is achieved. We will also give in Section~\ref{sec:loss} a geometric interpretation of the loss function. In particular, a \emph{sufficient} condition for weak asymptotic efficiency is given by the following condition: for each $l$ the level sets of the importance function $\xi$ are always 'in between' two tangential level sets ('initial' and 'final') of the quasi-potential $U$. The first 'initial' level set is defined from the initial condition  $\set{U(x_0, \, . \, ) = \mrm{constant}}$, the second 'final level set is defined up to the final rare event set $\set{ U(\, . \,, B ) = \mrm{constant}}$. This condition is equivalent to the existence of a state in $\set{\xi =l}$ that simultaneously minimizes $U(x_0, \, . \, )$ and $ U(\, . \,, B )$. By construction, this state does belong to an optimal trajectory as defined by large deviations of reahcing $B$ before $A$. This sufficient condition for weak asymptotic efficiency is depicted in Figure~\ref{fig:init}. We will show in Section~\ref{sec:insights} that a \emph{necessary} condition is given by the weaker demand that only the minimizer of the quasi-potential \emph{from the initial condition} belongs to an optimal path from the intial condition to $B$ (an 'instanton').\medskip

\begin{figure}
\begin{center}
\begin{tikzpicture}
 
% Include the image in a node
\node [
    above right,
    inner sep=0] (image) at (0,0) {\includegraphics[width=9cm]{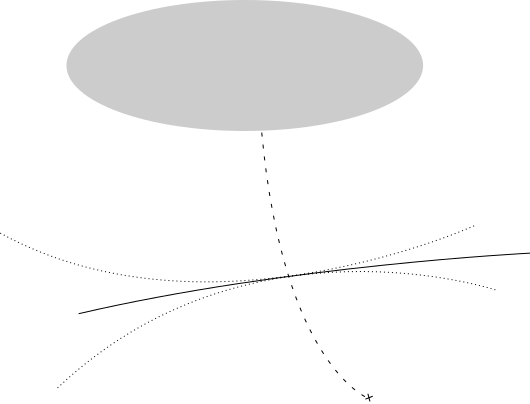}};
 
% Create scope with normalized axes
\begin{scope}[
x={($0.1*(image.south east)$)},
y={($0.1*(image.north west)$)}]
 
% % Grid
%     \draw[lightgray,step=1] (image.south west) grid (image.north east);
%  
% % Axes' labels
%     \foreach \x in {0,1,...,10} { \node [below] at (\x,0) {\x}; }
%     \foreach \y in {0,1,...,10} { \node [left] at (0,\y) {\y};}
 
% Labels
    \node[above right,black,fill=white] at (2.1,7.2){$B=\set{\xi \geq \lmax}$};
    
%     \draw[latex-,thick,gray] (1.5,4.15) -- ++(0.,+.35)
%         node[above,black,fill=white]{$\set{U(\, . \,,B)=u_\ast(l)}$};
    
    \draw[latex-,thick,gray] (1.5,3.55) -- ++(0.,+.45)
        node[above,black,fill=white]{ \begin{tabular}{l}
        $\Big \{ U(\, . \,,B) = \mrm{cte}\Big \}$
        \end{tabular}
        };
        
    \draw[latex-,thick,gray] (1.5,0.5) -- ++(0.,-.45)
        node[below,black,fill=white]{$\Big \{ U(x_0,\, . \,)  = \mrm{cte} \Big \}$};
        
    \draw[latex-,thick,gray] (1.8,2.5) -- ++(-0.35,0.)
        node[left,black,fill=white]{$\Big \{ \xi = l \Big \}$};
        
    \node[circle,black,fill=white] at (7.5,0.2){$x_0$};
    
    \draw[latex-,thick,gray] (5.2,4.7) -- ++(+0.35,0.)
        node[right,black,fill=white]{$\set{x_\ast}$};
        
%%%%%%%%% TIKZ EXAMPLE %%%%%%        
%     \draw[latex-, very thick,green] (2.5,1) -- ++(-0.5,0)
%         node[left,black,fill=white]{\small Voltage source};
        
%     \node[circle,fill=green] at (7.25,6.75){\small 2};

%     \draw[latex-, very thick,green] (2.5,1) -- ++(-0.5,0)
%         node[left,black,fill=white]{\small Voltage source};
 
% %     \draw[stealth-, very thick,green] (5,1.75) -- ++(0.5,-0.5)
%         node[right,black,fill=white]{\small Dspace card};
%  
%     \draw[very thick,green] (0.5,2.5) rectangle (4,9.5) 
%         node[below left,black,fill=green]{\small 1};
 
%     \draw[latex-, very thick,green] (5.5,4) edge (5.5,5.5)
%         (5.75,4.5) -- (5.5,5.5)
%         node[above,black,fill=white]{\small R-L load};
\end{scope}
\end{tikzpicture}
\end{center}
\caption{A sufficient condition for weak asymptotic efficiency when satisfied for all $l \in [\xi(x_0),\lmax]$. Note that the level sets of the importance function $\xi$ is 'in between' the level sets of the quasi-potential cost i) from the initial condition, and ii) up to the final set $B$. $\set{x_\ast }$ represents a least unlikely (optimal) trajectory reaching the rare event (the {\em instanton}, see Section~\ref{sec:loss}).}\label{fig:init}
\end{figure}

The sufficient condition in Figure~\ref{fig:init} can be related to sub-solutions of the Hamilton-Jacobi equation associated with the quasi-potential cost function $U$ (see Section~\ref{sec:HJ}). It is equivalent to the existence of a strictly increasing real valued function $F$ such that $F \circ \xi$ satisfies a certain weaker notion of sub-solution; this weaker notion of sub-solution is given by the usual global notion, i.e. $ F \circ \xi(y) - F \circ \xi(x) \leq U(x,y)$, $\forall \; x,y$, but restricted to either $x$ given by the initial condition $x=x_0$, or $y$ taking values in the target set $y \in B$. In particular, this is a less demanding condition than being the limiting committor function $\xi = \xi_\ast$, or just $F \circ \xi$ being a sub-solution (which are conditions independent of the initial condition). More comparisons with known results in the literature can be found in Section~\ref{sec:related}. \medskip

Finally, some prospects related to practical applications are discussed in Section~\ref{sec:practice}. %First i), we discuss the empirical evaluation of the main fluctuation formulas~\eqref{eq:p_th} or~\eqref{eq:loss} using Monte Carlo estimation. Second ii), when the latter evaluation can be achieved for various $\xi$, we mention the possibility of improving the importance function $\xi$ by minimizing the obtained variance estimation. 
The general idea is that it may be possible to approximate, at least in the large deviations picture, the loss function $\loss(l)$ in~\eqref{eq:p_th} or the variance formula~\eqref{eq:p_th} using the AMS algorithm or some other \textit{ad hoc} Monte Carlo algorithms. When the latter evaluation can be achieved for various $\xi$ but with a given Monte Carlo procedure constructed with a reference $\xi_0$, one can then try to improve $\xi_0$ by minimizing the obtained quantity over the available choices of $\xi$. \medskip

The paper is organized as follows. In Section~\ref{sec:notations}, the main notations are summarized. Section~\ref{sec:main} provides the mathematical context, the considered assumptions, and the rigorously states our main results. More precisely, Freidlin-Wentzell large deviations theory is recalled in Section~\ref{sec:FW}, while the description and asymptotic normality for large sample size of the considered Monte Carlo algorithms is given in Section~\ref{sec:algos}. Assumptions and results are detailed in respectively Section~\ref{sec:ass} and~\ref{sec:results}. Section~\ref{sec:insights} is dedicated to the interpretation of the results, with comments on few related works. Finally, Section~\ref{sec:proofs} contains the mathematical proofs, in particular the main large deviations analysis and its consequences.

\section{Summary of notations}\label{sec:notations}

$E$ denotes the main Polish state space.\medskip

Level sets are denoted \textit{e.g.} $\set{\xi \leq l} = \set{x \in E \mid \xi(x) \leq l}$, the minimum (resp. maximum) with $\wedge$ (resp. $\vee$).\medskip

Continuous trajectories in state space $E$ are denoted with bold lower case. $\pa{x}$ denotes a generic trajectory, $\pa{X}^\eps$ is a random trajectory with small noise $\eps > 0$ satisfying a LDP in $C([0,T],E)$ for each $T$ with rate function $I_{[0,T]}$.\medskip

We denote entrance times in $S$ with $ \tau_S(\pa{x}) \eqdef \inf \{ t \geq 0 : \pa{x}_t \in S \},$ with the usual convention $\inf\emptyset = + \infty$. When the topology of $S$ is important in order to obtain upper or lower bound, we will use the notation $\tau_S^+ \eqdef \tau_{\mathring{S}}$ and $\tau_S^- \eqdef \tau_{\overline{S}}$. $\xi$ denotes the continuous importance function of interest and we also use the shorthand notation.\medskip

Quasi-potential two-points function $U$ is defined by considering rate function minimizing trajectories avoiding $A$, and with fixed end points, that is
$ U(x,y) \eqdef \inf_{\pa{x},T} I_{[0,T]}(\pa{x})$ under the conditions $\tau_A(\pa{x}) > T$, and $\pa{x}_0=x, \pa{x}_T=y$. In the same way, $U^{(l)}$ denotes the variant where trajectories are also constrained in $\set{\xi \leq l}$, that is,  $U^{(l)}(x,y) \eqdef \inf_{\pa{x},T} I_{[0,T]}(\pa{x})$ under the condition $ T < \tau^+_l(\pa{x}) \wedge \tau_A(\pa{x})$, and $\pa{x}_0=x, \pa{x}_T=y$. If $C$ is a subset of $E$, we will denote
$
U(C,y) := \inf_{x \in C} U(x,y), 
$
% $$
% as well as
% $$
$U(y,C) := \inf_{z \in C} U(y,z)$, and similarly for $U^{(l)}$. \medskip
% 
% \OLD{
% 
% 
% where $I_{[0,T]}(x)$ is the rate function (the $\eps$-asymptotic log-likelihood) of the large deviation principle satisfied by $( \pa{X}^\eps )_{\eps > 0}$ on the time interval $[0,T]$. In the same way,
% $$
% U^{(l)}(x, y ) \eqdef \inf_{\substack{\pa{x}_0=x \\ \pa{x}_{\tau_l(\pa{x})}=y}} I_[0,\tau_l(\pa{x})](\pa{x}),
% $$
% 
% 
% 
% The rare event considered in this work consists in the trajectory $(\pa{X}^\eps_t)_{t\geq0}$ hitting a measurable set $B$ before a measurable set $A$:
% $$
% \set{\tau_B(\pa{X}^\eps) < \tau_A (\pa{X}^\eps)}
% $$
% 
% 
% Thoughout Section~\ref{sec:proofs}, $T$ will denote a deterministic horizon time that may be taken aribtrarily large when necessary. Continuous paths will be considered on $[0,T]$, defining the Polish space $C([0,T],\R^d)$, endowed with the uniform topology. One will denote by 
% $$
% \tau_A^+(\pa{x}) := \inf\set{t \in [0,T] | \pa{x}_t \in \mathring{A}} = \tau_{\mathring{A}}(\pa{x})
% $$
% the hitting time of the interior of a set $A$, and similarly 
% $$
% \tau_A^-(\pa{x}) := \inf\set{t \in [0,T] | \pa{x}_t \in \bar{A}} = \tau_{\bar{A}}(\pa{x})
% $$
% the hitting time of the closure.
% }

When $\eps >0$ plays no role, or when considering objects associated with the Monte Carlo algorithm, we may drop in notations the dependence on $\eps$. In the latter case the dependence on $N$ will be used instead.\medskip 

The probability that $\pa{X}$ reaches level $l$ before $A$ for the considered initial condition $x_0$ is denoted $p_l \eqdef \P\b{\tau_{l}(\pa{X}) < \tau_{A}(\pa{X})}$; when we want to stress the dependence in the initial condition we will rather denote 
$q_l(x) \eqdef \P_{\pa{X}_0=x} \left( \tau_l(\pa{X}) < \tau_{A} (\pa{X}) \right)$ so that $q_l(x_0) = p_l$. The conditional distribution at the first hitting time of level $l$ is denoted $\eta_l(\ph) \eqdef \E\b{ \phi\p{\pa{X}_{\tau_l(\pa{X})}}  \mid {\tau_{l}(\pa{X}) < \tau_{A}(\pa{X})} } $, while its pathwise generalization is denoted $ \eta^{\mrm{path}}_l(\psi) \eqdef \E\b{ \psi\p{\pa{X}}  \mid {\tau_{l}(\pa{X}) < \tau_{A}(\pa{X})} } $; $\ph$ (resp. $\psi$) denoting a generic bounded measurable test function on $E$ (resp. path space $C(\R_+,E)$).    \medskip

% \OLD{
% We also recall the following notation fro the key objects. $\ph$ denotes generically a bounded measurable test function on the state space $\R^d$.
% \begin{align*}
%     & \eta_0 = \Law(\pa{X}_0), \\
%     & \gamma_l(\ph) := \E\b{ \ph\p{\pa{X}_{\tau_{l}(\pa{X})} }  \one_{\tau_{l}(\pa{X}) < \tau_{A}(\pa{X})}   } \\
%     &  \\
%     & p_l := \gamma_l(\one) = \eta_0(q_l) = \P\b{\tau_{l}(\pa{X}) < \tau_{A}(\pa{X})} \\
%     & \eta_l := \gamma_l / \gamma_l(\one) . 
% \end{align*}
% We may also straightforwardly generalize the measures $\gamma_l,\eta_l$ defined on $\R^d$ to pathwise measures defined on $C(\R_+,\R^d)

% by \textit{e.g.}
% $\gamma_l(\psi) := \E\b{ \psi\p{\pa{X}}  \one_{\tau_{l}(\pa{X}) < \tau_{A}(\pa{X})}   }
% $, $\psi$ being a pathwise bounded function; notation abuse will be clear from context.
% }

\section{Framework and results}\label{sec:main}

\subsection{Typical framework: The Freidlin-Wentzell theory}\label{sec:FW}

The classical small noise problem (see Section~$6$ of \cite{Var84}, Section~$5.6$ of~\cite{DemZei98}, \cite{FreiWen12}) is studied for small noise diffusion processes given by the $\R^d$-valued (strong) solution 
$
 \pa{X}^\eps = (\pa{X}^\eps_t)_{t\geq 0}
$
to the Stochastic Differential Equation (SDE) 
\begin{equation}\label{eq:SDE}
\begin{cases}
\dd \pa{X}^\eps_t = b \left( \pa{X}^\eps_t \right) \dd t + \sqrt\eps \sigma \left( \pa{X}^\eps_t \right) \dd W_t, \\
\pa{X}^\eps_0 = x_0
\end{cases}
\end{equation}
where, as usual, $(W_t)_{t\geq0}$ is an $\R^m$-valued Brownian motion; and the drift function $b:\R^d \to \R^d$ as well as the diffusion coefficients $\sigma:\R^d \to \R^{d\times m}$ are Lipschitz continuous. The process is parametrised by the noise amplitude $\eps>0$.\medskip

Let $T > 0$ be a given horizon. It is well-known that the SDE~\eqref{eq:SDE} satisfies a Large Deviations Principle in $C([0,T],\R^d)$ with good rate function (in the case of non-degenerate noise) given by
\[ 
I_{[0,T]}(\pa{x}) = \frac12 \int_0^T \abs{\dot{\pa{x}}_t -b(\pa{x}_t)}_{  g(\pa{x}_t) }^2 \dd t.
\]
The latter rate function is as usual a lower semi-continuous functional on $C([0,T],\R^d)$, finite-valued on a Sobolev sub-space. In the above, $\abs{ \, . \,  }_{g} $ denotes the (Riemannian) $l^2$-norm associated with the metric $$g \eqdef \p{\sigma\sigma^T}^{-1} .$$  Clearly, the rate function has an additive structure in the sense that for any $T' \geq T$, one has
\begin{equation}\label{eq:add}
 I_{[0,T']} = I_{[0,T]} + I_{[T',T]},
\end{equation}
where in the above the two functions of the right hand side have being trivially extended to lower semi-continuous functions on $C([0,T'],\R^d)$. \medskip

In our context, the {\em reference} set $A$ will typically contain the attractor set associated with the limiting ordinary differential equation 
$$ \dot{\pa{x}}_t = b(\pa{x}_t)$$
and the considered initial condition $x_0$ in the sense that $\pa{x}_t \in A$ for all $t$ large enough. \medskip 

% \OLD{, although we won't use this property directly as an assumption. The associated probability is explicitly $ \P_{x_0} \left( \tau_B(\pa{X}^\eps) < \tau_A (\pa{X}^\eps) \right)$, and the rare event algorithms presented below aim to compute it, as well as the distribution of the process conditioned by the rare event $\set{\tau_B(\pa{X}^\eps) < \tau_A(\pa{X}^\eps) }$.
% Splitting algorithms crucially rely on a continuous function sometimes called a \emph{reaction coordinate} (or \emph{importance function}), and denoted here $$\xi\in C(\R^d,\R).$$
% The latter quantifies in some way how close the process is from the rare event. This idea can be clearly seen when 
% \[ 
% B \eqdef \set{ \xi \geq 1} = \set{x \in \R^d | \xi(x) \geq 1} ,
% \]
% which we will assume for simplicity from now on. For sake of brevity, we will also denote these entrance times $$ \tau_l(\pa{x}) := \tau_{\{ \xi \geq l\}}(\pa{x})$$ for some level $l\in \R$, so that if $ \xi(\pa{x}_0) \in [0,1]$, then $\tau_B( \pa{x}) = \tau_{1}(\pa{x})$.
% }

Given a reference set $A$, and an initial and final points $x,y$ it is possible to define the \emph{(quasi-potential) cost function} $U:(\R^d)^2\to\R_+$ as the minimum of the rate function:
\[ U(x,y) = U^A(x,y) \eqdef \inf_{T>0} \inf_{\substack{\pa{x}\in C([0,T],\R^d) \setminus \mathring{A}) \\ \pa{x}_0=x\\\pa{x}_T=y}} I_{[0,T]}(\pa{x}) ; \]
the latter quantifies \emph{how unlikely} is a trajectory deviating from the zero-noise solution $\dot x_t = b(x_t)$ in order to go from $x$ to $y$. The quasi-potential 
has a geometric, time-free\footnote{It can be derived by minimizing over an arbitrary time change. A simple calculation enables to double-check that the expression is independent of the path parametrization $\theta \mapsto \pa{x}_{\theta}$.} expression:
\[
 U(x,y) = \inf_{ \substack{ \pa{x}\in C([0,T],\R^d)\\ \pa{x}_0=x,\pa{x}_1=y \\ \pa{x}_{\theta} \notin \mathring{A} }} \int_{0}^1 \left( \abs{ \frac{\d\pa{x}_\theta}{\d \theta}}_{  g(\pa{x}_\theta) } \Big \vert {b(\pa{x}_\theta) \Big \vert }_{  g(\pa{x}_\theta) } - \bracket{ \frac{\d\pa{x}_\theta}{\d \theta}, b(\pa{x}_\theta)}_{  g (\pa{x}_\theta) } \right) \dd \theta ,
\]
and thus can be interpreted as an oriented type of length hence satisfying the triangle inequality
$$
U(x,z) \leq U(x,y) + U(y,z) \qquad \forall x,y,z \in \R^d \setminus \mathring{A} .
$$

Classically (see \cite{freidlin-wentzell-84}), one assumes that the ODE~\eqref{eq:SDE} with $\eps =0$, $\dot x = b(x)$, as a unique attractor $x_A = \lim_{t \to + \infty} x_t$ and the quasi-potential is sometimes defined by the function 
$
U(x) = U(x_A,x).
$
The latter defines the iso-likely exit levels from the attractor defined by $x_A$. %(the so-called \emph{exit problem}). 
In that specific context one usually chooses $A$ to be a small neighborhood of $x_A$. In the present work, the {quasi-potential} will rather refer to the two points function $U$. \medskip

Upon some assumptions that will be detailed in Section~\ref{sec:ass}, the quasi-potential provides the asymptotic behavior of the probability of hitting a specific set $B$ before the attractor $A$:
\[
 \lim_{\eps \to 0} \eps \ln \P_{x_0} \b{\tau_B(\pa{X}^\eps) < \tau_A(\pa{X}^\eps) } = - \inf_{y \in B} U(x_0,y).
\]

\subsection{Rare event simulation: The splitting algorithms and their asymptotic variances}\label{sec:algos}

\subsubsection*{Adaptive Multilevel Splitting (AMS)}

The AMS algorithm is described assuming one can simulate the underlying time-continuous diffusion process $(\pa{X}_t)_{t \geq 0}$, but of course in practice the latter has to be discretized and the AMS algorithm needs to be slightly adjusted in consequence.\medskip

The AMS algorithm can be succinctly but rigorously described as follows.\medskip

Initially, $N$ clones (a.k.a. {\em particles}) are simulated independently using the underlying Markov dynamics until they reach the {\em reference} set $A$. They are denoted $(\pa{X}^{(1,0)},\ldots,\pa{X}^{(N,0)})$. \medskip

First i), the iteration index of the algorithm is denoted by the index $i \geq 1$ and is used to enumerate the finite number of levels $L_1 < \ldots < L_i < \ldots L_{I^N_{l}}$ at which a branching event (that is the killing and splitting of well-chosen clones) occurs. $I^N_{l}$ denotes the first iteration at which all clones reach the target set $\set{\xi \geq l}$ for some given $l$. At each iteration $i$, the sample size  \medskip

Second ii), the level $L_{i}$ is computed as the $k$-th order statistics of the {\em scores} associated with each clones; the 'score' being given by the maximum of the importance function $\xi$ over the clones' trajectories, that is: $\sup_{t \leq \tau_A(\pa{X}^{(n,i-1)})} \xi(\pa{X}_t^{(n,i-1)}).$ Then the $k$ clones with lowest scores are killed, and $k$ new clones are uniformly\footnote{\textit{E.g.} with a multinomial or other permutation invariant distribution} randomly picked among the $N-k$ survivors ({\em selection step}). When $k=1$ the AMS algorithm is called the \emph{last particle algorithm}. \medskip
 
Third iii), each newly created clone is modified ({\em mutation step}) using independent simulations of the underlying Markov dynamics, \emph{with initial condition the first hitting time of $\set{\xi \geq L_{j}}$}, and until the reference set $A$ is reached. \medskip

This algorithm yields as an output two main estimators. First,
\begin{equation}\label{eq:estim_p}
p^N_{l,\mathrm{ams}} \eqdef \b{ \frac{N-k}N }^{I^N_{l}}
\end{equation}
estimates (without bias) the rare event probability $\P(\tau_l(\pa{X})<\tau_A(\pa{X}))$; second the empirical distribution of the clones' trajectories at iteration $I^N_l$ denoted
\begin{equation}\label{eq:estim_cond}
\eta^{N,\mrm{path}}_{l,\mrm{ams}} 
\eqdef \frac1N \sum_{n=1}^N \delta_{\pa{X}^{(n,I^N_l)}}
\end{equation}
estimates the conditional distribution $\Law \p{ \pa{X} \mid \tau_l( \pa{X})<\tau_A(\pa{X}) }$. It is also known (see~\cite{cdgr3}), as is also true with any Sequential Monte Carlo algorithm,  that non-normalized quantities are unbiased, that is $\E \b{p^N_{l,\mrm{ams}} \eta^{N,\mrm{path}}_{l,\mrm{ams}}(\Psi) } = \E \b{ \Psi( \pa{X} ) \one_{\tau_1(\pa{X})<\tau_A(\pa{X})} }$ for any pathwise measurable bounded function $\Psi$. \medskip

Under some mild (non-minimal) assumptions (called Assumptions~$1$, $2$ and~$3$ in~\cite{cdgr3}), the above estimators are asymptotically normal.
\begin{The}[Asymptotic normality of AMS,~\cite{cdgr3}]\label{th:CLT} Let $k=1$. Assume that:
\begin{itemize}
 \item $(\pa{X}_t)_{t \geq 0}$ is a Feller Markov process taking values in a locally compact space Polish $E$, with $\xi(\pa{X}_0) \geq \lmin > - \infty$ almost surely.
 \item For all $x \in \set{\xi \in [\lmin,\lmax]}$, entrance times of closures and interiors of sets of interest are the same, that is $\P_{x}\b{\tau_{\xi(x)}^-(\pa{X})=\tau_{\xi(x)}^+(\pa{X})}=\P_{x}\b{\tau_{A}^-(\pa{X})=\tau_{A}^+(\pa{X})}=1$,
 \item $\P_{x}\b{\tau_{\lmax}(\pa{X})<\tau_{A}(\pa{X})}$ is uniformly bounded away from $0$ for $x \in \set{\xi \geq \lmin}$.
\end{itemize}
Then the estimators $ p_{l,\mathrm{ams}}^N$ and $\eta^{N,\mrm{path}}_{l,\mrm{ams}}$ are asymptotically normal when $N \to + \infty$ with $O(1/N)$ variance. Moreover, the asymptotic variance of $\sqrt{N} \, p^N_{l,\mathrm{ams}} $ is given by
% satisfies
% \[ \sqrt N \left( \hat p_{\mathrm{ams}}^\eps - p^\eps \right) \overset{\mathcal L}{\underset{N\to\infty}\longrightarrow} \mathcal N (0,\sigma_\eps^2 ) \]
% where
\begin{equation}\label{eq:var_in_th}  \sigma_{l,\mrm{ams}}^2 \eqdef -(p_l)^2 \ln p_l + 2 \int_\lmin^l \mrm{Var}_{\eta_{l'}} (q_{l'}) \,p_{l'} \,\dd (-p_{l'}).
\end{equation}
\end{The}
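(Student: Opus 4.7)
The overall plan is to treat AMS as an adaptive Sequential Monte Carlo (SMC) sampler whose underlying Feynman-Kac model lives on path space, and to derive the CLT from the general theory of interacting particle systems. The asymptotic variance $\sigma^2_{l,\mrm{ams}}$ should then emerge as the continuous limit of the standard Del Moral type Feynman-Kac variance as the level increments of a non-adaptive analogue vanish. The second estimator $\eta^{N,\mrm{path}}_{l,\mrm{ams}}$ can be handled in the same stroke, since it is also a functional of the same particle system.

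First I would establish the CLT for the non-adaptive Fixed Multilevel Splitting (FMS) variant with a deterministic increasing sequence $\lmin = l_0 < l_1 < \cdots < l_J = l$: the selection step at iteration $j$ is governed by the path potential $\ind{\tau_{l_j}(\pa{x}) < \tau_A(\pa{x})}$, and the unbiased product estimator $p^N_{l,\mrm{fms}} = \prod_j \eta^N_{j-1}(G_j)$ fits directly in the classical Feynman-Kac CLT of \cite{delmoral04a}, yielding an asymptotic variance expressible as a sum along iterations of propagated conditional variances through the Feynman-Kac semigroup. To pass to AMS with $k=1$, I would take the continuous limit where each ratio $p_{l_j}/p_{l_{j-1}}$ is replaced by $1-1/N$: under the reparametrization $l' \mapsto -\log p_{l'}$, the FMS Riemann sum converges to $2 \int_\lmin^l \mrm{Var}_{\eta_{l'}}(q_{l'}) \, p_{l'} \, d(-p_{l'})$, while the additional diagonal term $-p_l^2 \log p_l$ arises from applying the delta method to $p^N_{l,\mrm{ams}} = (1-1/N)^{I^N_l}$, since $I^N_l$ concentrates around $-N\log p_l$ with Poisson-type fluctuations of the same order.

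The real technical obstacle is to rigorously replace the random adaptive AMS levels $L_i$ by the deterministic FMS levels, which requires controlling the empirical quantiles of the particle scores uniformly in $l$. A natural approach is to couple the AMS dynamics with an FMS-type idealized system and use a functional CLT on the empirical distribution function of the particle scores to show that $\abs{L_i - l_i} = O(1/\sqrt N)$ and that the residual fluctuations only contribute lower-order terms to the CLT. The assumption that interior and closure hitting times coincide almost surely is used precisely here, since it rules out atoms in the score distribution and enables a Bahadur-type linear representation of the order statistics. Once this coupling is in place, the AMS CLT is inherited from the FMS one and the announced closed-form variance follows after identifying the Riemann-sum limit.
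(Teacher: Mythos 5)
The theorem you are proving is not actually proved in this paper: it is quoted from \cite{cdgr3}, and the paper only sketches, ``in a slightly formal way,'' an FMS-to-AMS limiting argument (Theorem~3.2 and Lemma~3.3). According to the paper's own description, the rigorous route in \cite{cdgr3} is \emph{not} ``discretize into FMS, then refine the level ladder'': it is to recast AMS directly as a non-adaptive, \emph{time-continuous} Feynman--Kac/Fleming--Viot particle model, where the continuum of levels plays the role of time, and then invoke the CLT theory for such interacting particle systems. Your plan reproduces the paper's formal sketch rather than its cited proof, and it hides the genuinely hard step inside a single unexamined sentence.

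Concretely, the step you describe as ``controlling the empirical quantiles of the particle scores uniformly in $l$ \ldots Bahadur-type linear representation of the order statistics'' is not a technical obstacle to be dispatched this way; it \emph{is} the theorem. For $k=1$ the AMS algorithm performs $I^N_l \sim -N\log p_l$ iterations, each of which removes the current minimum of $N$ scores whose joint law depends on the entire selection/mutation history. A Bahadur representation controls a \emph{fixed} quantile of i.i.d.\ data to $O(N^{-1/2})$; here you would need to propagate $O(N^{-1/2})$ errors across $O(N)$ rounds of an interacting system with resampling, and a ``functional CLT on the empirical distribution function of the particle scores'' uniformly over all intermediate levels is precisely the statement one has to establish, not a tool one may assume. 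Nothing in your sketch explains why these accumulated errors only contribute lower-order terms to the CLT. Moreover, your accounting of the variance is off: you present $-p_l^2\ln p_l$ as coming separately from a delta method applied to $(1-1/N)^{I^N_l}$, and the integral $2\int \mathrm{Var}_{\eta_{l'}}(q_{l'})\,p_{l'}\,\dd(-p_{l'})$ as the Riemann limit of the FMS variance; in fact both terms appear together in the FMS variance formula~\eqref{eq:var_fms} (the second sum $\sum_j(p_{\ell_{j-1}}/p_{\ell_j}-1)$ is what converges to $-\ln p_l$), and, conversely, a delta method on $I^N_l$ would have to reproduce the \emph{entire} variance since $\Var(I^N_l)$ encodes both contributions. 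As it stands the proposal is a plausible heuristic derivation of the formula, but it is not a proof and does not follow the route the paper attributes to \cite{cdgr3}.
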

In~\cite{cdgr3}, a similar formula is given for the large sample size variance of all estimators, see Corollary~$2.8$ and Theorem~$2.13$. The extension to the case $k > 1$ under the same assumptions, where $k$ is fixed and $N \to + \infty$ can be obtained using the results of~\cite{cgr_sync}. \medskip

Note that the considered assumptions, although quite mild, are probably not minimal. in particular the Feller assumption may not be necessary and the third assumption may be replaced by the strict positivity $\P_{x_0}\b{\tau_{\lmax}(\pa{X})<\tau_{A}(\pa{X})} > 0$, see Appendix~$F$ in~\cite{cdgr3}. \medskip

Although the AMS algorithm has been originally presented as an \emph{adaptive} Sequential Monte Carlo method, it is more convenient, in order to understand its unbiasedness structure (typical of non-adaptive SMC methods) and to compute formally the variance formula~\eqref{eq:var_in_th}, to recast it as a \emph{classical, non-adaptive, time continuous Sequential Monte Carlo model}. In order to do so the role of {\em time} must be played by the continuum of possible levels in $[\lmin,\lmax]$, see Section~$3$ in~\cite{cdgr3}. The AMS algorithm can then be interpreted as a Fleming-Viot process which possesses the Feynman-Kac structure promoted in the work of P.~Del~Moral (see~\cite{dm00,delmoral04a}). In the next section, we present a fixed levels version of the AMS algorithm which enables to formally justify the latter ideas. A short review of those variants of fixed effort splitting algorithms is provided in~\cite{cgr_hist}. \medskip

\subsubsection*{Fixed Multilevel Splitting} Consider now a fixed number $J$ of levels $\ell_1<\cdots<\ell_J=\lmax$, with final level $\lmax$. Those levels are deterministic and chosen beforehand. The Fixed Multilevel Splitting algorithm is a standard Sequential Monte Carlo method, with a Del~Moral-Feynman-Kac structure as in~\cite{delmoral04a} section 12.2, or see also \cite{alea06}.\medskip 

This algorithm can be succinctly described as follows. Initially, $N$ clones (a.k.a. {\em particles}) are simulated independently using the underlying Markov dynamics until they reach the {\em reference} set $A$. They are denoted $(\pa{X}^{(1,i)},\ldots,\pa{X}^{(N,i)})$ for $i=0$.\medskip

First, i) the iteration index is given by the index $j$ of the considered level in the ladder $\ell_1<\cdots< \ell_j < \cdots <\ell_J=\lmax$. \medskip

Second ii), the {\em selection step} is made using the following $0$~or~$1$ weights: the $K_j$ clones whose {\em score} given the maximum of the importance function $\xi$ over the clones' trajectories -- that is: $\sup_{t \leq \tau_A(\pa{X}^{(n,j-1)})} \xi(\pa{X}_t^{(n;j-1)})$ --  fail to reach level $\ell_j$ are killed (weight $0$). $K_j$ new clones are then  randomly picked (e.g. multinomial distribution\footnote{for a discussion on the different resampling options in that case see \cite{ma2022random}}) among survivors. If $K_j=N$, the algorithm is stopped, and the rare event probability is estimated by~$0$. \medskip

Third iii), each new created clone is modified ({\em mutation step}) by simulating independently the underlying Markov dynamics, \emph{with initial condition the first hitting point of the level set $\set{\xi \geq \ell_j}$}, up until reaching the {\em reference set} A. \medskip

This Fixed Level algorithm yields as an output similar estimators as the Adaptive algorithm; the estimator of the small probability $p_{\ell_j}$ is obtained for instance \textit{mutatis mutandis} by

\[ 
p^N_{\ell_j,\mathrm{fms}} \eqdef \prod_{j'=1}^j \frac{N-K_{j'}}N ;
\]

and the empirical distribution of the clones' trajectories at iteration $j$, that is
$$
\eta^{N,\mrm{path}}_{\ell_j,\mathrm{fms}} \eqdef \frac1N \sum_{n=1}^N \delta_{\pa{X}^{(n,j)}}
$$
estimates, as for AMS, the conditional distribution $\Law \p{ \pa{X} \mid \tau_{l_j}( \pa{X})<\tau_A(\pa{X}) }$. It is also well known and easy to check --as is always true with Sequential Monte Carlo strategies -- that non-normalized quantities are unbiased, that is $\E \b{p^N_{\ell_j,\mathrm{fms}} \eta^{N,\mrm{path}}_{\ell_j,\mathrm{fms}}(\Psi) } = \E \b{ \Psi( \pa{X} ) \one_{\tau_{\ell_j}(\pa{X})<\tau_A(\pa{X})} }$ for any $\Psi$ bounded measurable pathwise test function. \medskip

The AMS algorithm can then be obtained as a \emph{limit of the Fixed Level algorithm} when $J \to + \infty$ with $\max_j \ell_{j+1} - \ell _j  \to 0$; at least in a slightly formal way. To do so, consider in the Fixed Multilevel Splitting algorithm, the random sequence of levels 
$$
L_1 < \ldots < L_i < \ldots L_{I_{\mrm{iter}}}
$$
defined as the subsequence in the sequence $\set{ \ell_1 < \ldots < \ell_J}$ for which at least one killing event occur. The Fixed level algorithm can then be equivalently reformulated by iterating on the index $i = 1 \ldots I_{\lmax}$ instead of $j = 1 \ldots J$ with $\ell_{J}= \lmax$; $I_{\lmax}$ denoting the total number of killing (or branching) events required so that all clones have reached the level $\set{\xi \geq \ell_J= \lmax}$. When $\max_j \ell_{j+1} - \ell_j  \to 0$, then $K_{j'} \in \set{0,1}$ with probability tending to $1$. The Adaptive Multilevel Splitting for $k=1$ (the 'last particle' case) is thus simply obtained by taking the limit $J \to +\infty$ with  $ \max_j \ell_{j+1} - \ell_j  \to 0 $, or even more simply, by formally removing the constraints that $L_i \in \set{\ell_1, \ldots, \ell_J}$. \medskip

The case $k >1$ can be formulated in a similar fashion. One only needs to modify the Fixed Level algorithm above by triggering duplications of clones only when the number of surviving clones gets below $N-k$. The Adaptive Multilevel Splitting for any $k \geq 1$ is again simply obtained by taking the limit $J \to +\infty$ with  $ \max_j \ell_{j+1} - \ell_j  \to 0 $. \medskip

The asymptotic normality of the estimators of the Fixed Multilevel algorithm follows from the classical results in Section~$9$ of~\cite{delmoral04a}.
\begin{The}[Asymptotic normality of Fixed Multilevel Splitting, ~\cite{delmoral04a}] Let $J$ and $\ell_1<\cdots<\ell_J$ be given, assume $\xi(\pa{X}^\eps_0) \geq l_0$ almost surely, and assume that uniformly in the initial condition $x_0 \in \set{\xi \in [\lmin,\ell_J]}$, the probability that $\pa{X}$ reaches $\set{\xi \geq \ell_J}$ before $A$ is bounded away from $0$. \medskip

Then the estimators $ p_{l,\mathrm{fms}}^N$, and $\eta^{N,\mrm{path}}_{l,\mrm{fms}}(f)$, for any test function $f$, are asymptotically normal when $N \to + \infty$ with $O(1/N)$ variance. Moreover, the asymptotic variance of $\sqrt{N} \, p^N_{\ell_J,\mathrm{fms}} $ is given by 
\begin{equation}\label{eq:var_fms}
\sigma^2_{\ell_J,\mrm{fms}} \eqdef \sum_{j=1}^{J-1} \frac{p_{\ell_j}}{p_{\ell_{j-1}}} \left( \left( p_{\ell_{j-1}} \right)^2 - \left( p_{\ell_j} \right)^2 \right) \mathrm{Var}_{\eta_{\ell_j}} (q_{\ell_J}) + (p_{\ell_J})^2 \sum_{j=1}^J \left( \frac{p_{\ell_{j-1}}}{p_{\ell_j}} - 1 \right).
\end{equation}
\end{The}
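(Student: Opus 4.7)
The plan is to identify the Fixed Multilevel Splitting algorithm as a standard discrete-time Sequential Monte Carlo (Feynman-Kac) particle approximation on path space, to apply the central limit theorem for such particle models from Chapter~9 of~\cite{delmoral04a}, and then to simplify the resulting abstract variance expression into the closed form~\eqref{eq:var_fms} using the specific $0$--$1$ structure of the splitting potentials.

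First, I would set up the Feynman-Kac formalism. Index iterations by $j = 0, \ldots, J-1$, the state of a clone being the full trajectory $\pa{x}$ from $x_0$ up to $\tau_A(\pa{x})$. The selection potential is the indicator $G_j(\pa{x}) = \ind{\tau_{\ell_{j+1}}(\pa{x}) < \tau_A(\pa{x})}$, and the mutation kernel $M_{j+1}$ acts on a surviving trajectory $\pa{x}$ by freezing it up to its first hitting time of $\set{\xi \geq \ell_{j+1}}$ and then re-simulating independently from $\pa{x}_{\tau_{\ell_{j+1}}(\pa{x})}$ under the law of $\pa{X}$ until reaching $A$. By construction, the associated unnormalized Feynman-Kac measure at iteration $j$ equals $p_{\ell_j}\,\eta^{\mrm{path}}_{\ell_j}$ and the normalized one equals $\eta^{\mrm{path}}_{\ell_j}$, and the dynamics of the $N$-particle system described in the algorithm matches exactly the mutation-selection dynamics of the Feynman-Kac particle model associated with $(G_j, M_{j+1})$.

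Next, I would verify the non-degeneracy assumptions of the SMC CLT. The hypothesis that $\P_x[\tau_{\ell_J}(\pa{X}) < \tau_A(\pa{X})] \geq c > 0$ uniformly on $\set{\xi \in [\lmin, \ell_J]}$ implies in particular that $\eta^{\mrm{path}}_{\ell_{j-1}}(G_{j-1}) = p_{\ell_j}/p_{\ell_{j-1}} \geq c'$ for some $c' > 0$ at every stage, so that normalizing constants are uniformly non-degenerate and the probability of full extinction of the particle system is negligible (it decays geometrically in $N$). Applying the Feynman-Kac CLT (in the form of Chapter~9 of \cite{delmoral04a}, see also \cite{alea06}) then yields joint asymptotic normality of $(p^N_{\ell_J,\mrm{fms}}, \eta^{N,\mrm{path}}_{\ell_J,\mrm{fms}}(f))$ at rate $\sqrt N$ with $O(1/N)$ variance, expressed as a telescoping sum of local variances along the flow.

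Finally, I would simplify that sum to obtain~\eqref{eq:var_fms}. The key elementary identities are $\eta^{\mrm{path}}_{\ell_{j-1}}(G_{j-1}) = p_{\ell_j}/p_{\ell_{j-1}}$ and, for the unnormalized semigroup acting on the terminal state of the path, $Q_{j,J}(\one)(\pa{x}) = \ind{\tau_{\ell_j}(\pa{x}) < \tau_A(\pa{x})} \, q_{\ell_J}(\pa{x}_{\tau_{\ell_j}(\pa{x})})$; together they convert each local variance in Del Moral's expansion into a combination of $\mathrm{Var}_{\eta_{\ell_j}}(q_{\ell_J})$ and survival ratios $p_{\ell_j}/p_{\ell_{j-1}}$. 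I expect the main obstacle to be the algebraic bookkeeping in this simplification: tracking how the multinomial resampling at each level contributes the weight $(p_{\ell_{j-1}})^2 - (p_{\ell_j})^2$, and isolating the purely ``terminal'' contribution $(p_{\ell_J})^2 \sum_{j=1}^{J} (p_{\ell_{j-1}}/p_{\ell_j} - 1)$ which comes from the variance of the normalizing constant itself rather than from any path observable. Extinction events contribute only terms that vanish faster than $1/N$ and can be discarded in the CLT.
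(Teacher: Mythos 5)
Your proposal follows essentially the same route the paper takes: recast the Fixed Multilevel Splitting algorithm as a Feynman-Kac particle model on path space with indicator potentials $G_j$ and path-freezing mutation kernels, invoke the CLT for SMC approximations from Chapter~9 of Del Moral's book, and then simplify the abstract asymptotic variance expansion using $\eta^{\mrm{path}}_{\ell_{j-1}}(G_{j-1}) = p_{\ell_j}/p_{\ell_{j-1}}$ and $Q_{j,J}(\one)$ into the closed form~\eqref{eq:var_fms}. The paper itself does not carry out the algebraic simplification (it notes the explicit formula is absent from Del Moral's Section~9.4.2 and points to \cite{cgr_hist}), and you correctly identify that this bookkeeping step — isolating the $(p_{\ell_{j-1}})^2 - (p_{\ell_j})^2$ weights and the pure normalizing-constant contribution — is the only nontrivial part of the argument.
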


The above result should hold for $k>1$ fixed, although a rigorous extension is not provided explicitly in the literature up to our knowledge. Note that the variance formula~\eqref{eq:var_fms} is not provided explicitly in Section~$9.4.2$ of~\cite{delmoral04a}, see~\cite{cgr_hist} for more comments. %For the sake of completeness, we provide a self-contained appendix computing the formula~\eqref{eq:var_fms}. \medskip

It is then possible to derive formula~\eqref{eq:var_in_th} from~\eqref{eq:var_fms} as follows:
\begin{Lem} Assume that the decreasing function $l\mapsto p_l$ is continuous, and that $\ell_J = \lmax$ is fixed. Then one has
\[
 \lim_{\substack{J \to +\infty \\ \max_j \ell_{j+1} - \ell_j  \to 0 }} \sigma^2_{\lmax,\mrm{fms}} = \sigma^2_{\lmax,\mrm{ams}} .
\]
\end{Lem}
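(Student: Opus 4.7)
My strategy is to decompose the fixed-level asymptotic variance as $\sigma^{2}_{\lmax,\mrm{fms}}=\mathrm{I}_{J}+\mathrm{II}_{J}$, where
\[
\mathrm{II}_{J}\eqdef (p_{\lmax})^{2}\sum_{j=1}^{J}\Big(\frac{p_{\ell_{j-1}}}{p_{\ell_{j}}}-1\Big),\qquad \mathrm{I}_{J}\eqdef \sum_{j=1}^{J-1}\frac{p_{\ell_{j}}}{p_{\ell_{j-1}}}\bigl(p_{\ell_{j-1}}^{2}-p_{\ell_{j}}^{2}\bigr)\,V_{\ell_{j}},
\]
with $V_{l}\eqdef \mrm{Var}_{\eta_{l}}(q_{\lmax})\in[0,1/4]$, and identify each of these two quantities, via the change of variable $u=p_{\ell}$, as a Riemann--Stieltjes sum that refines to the corresponding piece of~\eqref{eq:var_in_th} in the limit $\max_{j}(\ell_{j+1}-\ell_{j})\to 0$.

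\textbf{Log-like term.} My first step is to observe that $p_{\ell_{0}}=1$ (since $\xi(\pa{X}_{0})\geq \lmin=\ell_{0}$ almost surely) and to rewrite $\mathrm{II}_{J}=(p_{\lmax})^{2}\sum_{j}(p_{\ell_{j-1}}-p_{\ell_{j}})/p_{\ell_{j}}$. By the assumed continuity of $l\mapsto p_{l}$, the collection $\{p_{\ell_{j}}\}_{j=0}^{J}$ is a partition of $[p_{\lmax},1]$ of vanishing mesh; the sum is then a tagged Riemann sum for the continuous bounded function $u\mapsto 1/u$ on $[p_{\lmax},1]$, and I would thus conclude $\mathrm{II}_{J}\to (p_{\lmax})^{2}\int_{p_{\lmax}}^{1}\dd u/u=-(p_{\lmax})^{2}\ln p_{\lmax}$.

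\textbf{Variance term.} I would factorize $p_{\ell_{j-1}}^{2}-p_{\ell_{j}}^{2}=(p_{\ell_{j-1}}+p_{\ell_{j}})(p_{\ell_{j-1}}-p_{\ell_{j}})$ so that the $j$-th summand of $\mathrm{I}_{J}$ becomes
\[
\Bigl(p_{\ell_{j}}+\frac{p_{\ell_{j}}^{2}}{p_{\ell_{j-1}}}\Bigr)\,(p_{\ell_{j-1}}-p_{\ell_{j}})\,V_{\ell_{j}}.
\]
Continuity of $l\mapsto p_{l}$ forces the bracketed factor to equal $2p_{\ell_{j}}\,(1+o(1))$ uniformly in $j$. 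Noting that the missing $j=J$ term vanishes (since $\eta_{\lmax}$-almost surely $q_{\lmax}\equiv 1$, so $V_{\lmax}=0$), and again switching to $u=p_{\ell}$, the sum $\mathrm{I}_{J}$ becomes a Riemann--Stieltjes sum for $2\int_{p_{\lmax}}^{1}u\,\psi(u)\,\dd u$ with $\psi(u)\eqdef V_{p^{-1}(u)}$; reverting to the variable $l$ yields the AMS integral $2\int_{\lmin}^{\lmax}V_{l}\,p_{l}\,\dd(-p_{l})$.

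\textbf{Main obstacle.} The delicate step is the convergence of the variance Riemann sum, which requires enough regularity of $l\mapsto V_{l}$: the sole continuity of $l\mapsto p_{l}$ is not sufficient. To handle this, I would invoke the Feller hypothesis already in force in Theorem~\ref{th:CLT}: a standard weak-continuity argument then shows that $l\mapsto \eta_{l}$ is weakly continuous at each $l$ where $l\mapsto p_{l}$ is continuous, and combined with the boundedness of $q_{\lmax}$ (and dominated convergence) this yields continuity of $V_{l}$, which is enough for Riemann--Stieltjes convergence. Summing the two limits then identifies $\lim_{J}\sigma^{2}_{\lmax,\mrm{fms}}$ with $\sigma^{2}_{\lmax,\mrm{ams}}$.
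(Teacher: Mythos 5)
The paper states this lemma without supplying a proof, so there is no internal argument to compare against; I therefore assess your proposal on its own terms. Your overall strategy---splitting $\sigma^2_{\lmax,\mrm{fms}}$ into $\mathrm{I}_J + \mathrm{II}_J$, recognising each sum as a Riemann(--Stieltjes) sum via the change of variable $u = p_\ell$, and passing to the vanishing-mesh limit---is the right one. Your treatment of the logarithmic term $\mathrm{II}_J$ is complete: uniform continuity of $l\mapsto p_l$ on the compact $[\lmin,\lmax]$ makes the induced $u$-mesh vanish, so the tagged sum converges to $-(p_{\lmax})^2\ln p_{\lmax}$. The factorisation and the uniform $(1+o(1))$ estimate for the variance summand are also correct, using the lower bound $p_\ell\ge p_{\lmax}>0$ to control $p_{\ell_j}/p_{\ell_{j-1}}\to 1$ uniformly.

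The genuine gap is in your justification that the variance Riemann sum converges. The Feller property of the semigroup $(P_t)_t$ does not, by itself, give a.s.\ continuity in $l$ of the stopped state $\pa{X}_{\tau_l\wedge\tau_A}$; that requires the hitting-time regularity $\tau^-_l=\tau^+_l$ propagated through the strong Markov property (the second bullet of Theorem~\ref{th:CLT}), not the Feller property. More seriously, weak continuity of $l\mapsto\eta_l$ together with mere \emph{boundedness} of $q_{\lmax}$ does not yield $\eta_{l'}(q_{\lmax}^2)\to\eta_l(q_{\lmax}^2)$: one would also need $q_{\lmax}$ to be ($\eta_l$-a.e.) continuous, a nontrivial boundary-regularity fact that you neither prove nor cite, and which is not part of the lemma's stated hypotheses. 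Fortunately there is a self-contained route that closes this step using only the stated assumption that $l\mapsto p_l$ is continuous. Write $V_l=\gamma_l(q_{\lmax}^2)/p_l-(p_{\lmax}/p_l)^2$. By Lemma~\ref{lem:monotone}, $l\mapsto\gamma_l(q_{\lmax}^2)$ is non-decreasing; since $p_l$ is positive, non-increasing and continuous, both $\gamma_l(q_{\lmax}^2)/p_l$ and $(p_{\lmax}/p_l)^2$ are non-decreasing, so $V_l$ (and hence $p_lV_l$) is a difference of monotone bounded functions and is thus of bounded variation on $[\lmin,\lmax]$. A bounded-variation integrand is Riemann--Stieltjes integrable against the continuous monotone integrator $-p_l$, and for a continuous integrator the Riemann sums converge along any sequence of partitions with vanishing mesh, regardless of tag choice. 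This delivers $\mathrm{I}_J\to 2\int_{\lmin}^{\lmax}V_l\,p_l\,\dd(-p_l)$ directly. I would replace your Feller-based continuity argument with this bounded-variation observation: it is elementary, reuses a lemma already in the paper, and matches the hypotheses of the statement exactly.
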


\subsection{Assumptions and basic consequences} \label{sec:ass}
Our results can  be stated for a family of pathwise continuous time homogeneous Markov processes $$\left\{ (\pa{X}^\eps_t)_{t\geq 0} : \eps>0 \right\},$$
taking value in a Polish state space $E$. $A$ denotes the {\em reference} set, and $\xi$ the continuous importance function of interest. $\lmax$ denotes the level of the target set $B \eqdef = \set{\xi \geq \lmax}$, the rare event of interest being $\set{\tau_B < \tau_A}$.\medskip

\begin{Rem} Although not necessary, one can assume with a negligible loss of generality that for any initial condition and level $l$, the hitting time of the interior or closure of $A$ or $\set{\xi \leq l}$ are the same $\tau^+_A(\pa{X}^\eps) = \tau^-_A(\pa{X}^\eps)$, and $\tau^+_l(\pa{X}^\eps) = \tau^-_l(\pa{X}^\eps)$, almost surely -- avoiding any ambiguity in the precise definition of the stopping times.
\end{Rem}

We will also need that $\pa{X}^\eps$ satisfies the strong Markov property (for its natural filtration) with respect to the stopping times $\tau^-_l(\pa{X}^\eps)$, $l\in \R$. \medskip 

For simplicity, we assume that the main rare event of interest is defined for a given deterministic initial condition with level greater than a reference $\lmin$
$$
\pa{X}^\eps_0 = x_0, \quad \xi(x_0) > \lmin.
$$
\begin{Rem}
 Our setting and results can be easily generalized \textrm{mutatis mutandis} to a general initial distribution $\eta_0^\eps$ satisfying a Large Deviation Principle on the Polish space $E$. Such a generalization can be obtained by adding the rate function associated to the initial distribution to the rate function $I_{[0,T]}$ of the process on the time interval $[0,T]$ in the small noise asymptotics.
\end{Rem}

The process $\pa{X}^\eps$ is assumed to satisfy on each time interval $[0,T]$ as $\eps\to 0$ a large deviation principle on the Polish space $C([0,T],E)$ (endowed with uniform convergence) for some good rate function $I_{[0,T]}$. The large deviations estimates are (classically) assumed to be true uniformly (in a local sense) with respect to the initial condition. This is the content of our first assumption.

\begin{Ass}\label{ass:ldp}
 For each final time $T$ and initial condition $x_0 \in E$, the family of processes $\set{\pa{X}^\eps}_{\eps \geq 0}$ satisfies a LDP in $C([0,T],E)$ with good rate function $I_{[0,T]}$. The LDP is locally uniform with respect to the initial condition, that is, for the upper bound:
 $$
 \limsup_{\substack{(x,\eps) \to (x_0,0)}} \eps \ln \P_{\pa{X}^\eps_0=x}\b{\pa{X}^\eps \in C} \leq -\inf_{C} I_{[0,T]}
 $$
for any closed set $C \subset C([0,T],E)$; and similarly for the lower bound:
 $$
 \liminf_{\substack{(x,\eps) \to (x _0,0)}} \eps \ln \P_{\pa{X}^\eps_0=x}\b{\pa{X}^\eps \in O} \geq -\inf_{O} I_{[0,T]}
 $$
 for any open set $O \subset C([0,T],E)$. 
 %Moreover, for any trajectory $\pa{x}$ such that $I_{[0,T]}(\pa{x})<+\infty$ then the map
 %$$
 %(t,t') \mapsto I_{[t,t']}(\pa{x})
 %$$
 %is continuous for $0 \leq t \leq t' \leq T$.
\end{Ass}
Classically, the uniform large deviations principle ensures that the Markovian property of the underlying process translates into the additivity property~\eqref{eq:add} of the rate function (using for instance the extended Varadhan lemmas detailed in Section~\ref{sec:varh}):
\begin{Lem} Under Assumption~\ref{ass:ldp}, the family of rate functions $(I_{[T,T']})_{0\leq T \leq T'}$ parametrized by time intervals satisfies the additivity property~\eqref{eq:add}.%\fred{details ? ref ?}
\end{Lem}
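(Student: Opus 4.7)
The plan is to combine the strong Markov property of $\pa{X}^\eps$ at the deterministic time $T$ with the \emph{locally uniform} character of the LDP (Assumption~\ref{ass:ldp}). The combination implies that the law of $\pa{X}^\eps$ on $C([0,T'],E)$ satisfies a LDP with rate function equal to the sum $\tilde{I}(\pa{x}) \eqdef I_{[0,T]}(\pa{x}) + I_{[T,T']}(\pa{x})$ (with each term extended to $C([0,T'],E)$ via restriction). Uniqueness of the rate function in a LDP then forces $I_{[0,T']} = \tilde{I}$.

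For the upper bound, I would fix a path $\pa{x} \in C([0,T'],E)$ and a small $\delta > 0$, and consider the $\delta$-tubular neighbourhood $U_\delta(\pa{x})$ in uniform topology. Writing $\pa{x}^1 \eqdef \pa{x}|_{[0,T]}$ and $\pa{x}^2 \eqdef \pa{x}|_{[T,T']}$, the Markov property at $T$ yields the factorisation
\[
\P_{x_0}\b{\pa{X}^\eps \in U_\delta(\pa{x})} = \E_{x_0}\b{\ind{\pa{X}^\eps|_{[0,T]} \in U_\delta(\pa{x}^1)} \, \P_{\pa{X}^\eps_T}\b{\pa{X}^\eps|_{[0,T'-T]} \in V_\delta(\pa{X}^\eps_T)}},
\]
where $V_\delta(y)$ denotes the time-shifted slice of $U_\delta(\pa{x}^2)$ starting at $y$. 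Applying the locally uniform upper bound of Assumption~\ref{ass:ldp} on the inner probability (which is uniform for $y$ in a small neighbourhood of $\pa{x}_T$) and combining with the outer LDP on $\pa{X}^\eps|_{[0,T]}$ via a Varadhan-type sandwich (as alluded to in Section~\ref{sec:varh}), one obtains
\[
\limsup_{\eps \to 0} \eps \ln \P_{x_0}\b{\pa{X}^\eps \in U_\delta(\pa{x})} \leq -I_{[0,T]}(\pa{x}) - I_{[T,T']}(\pa{x}) + o_\delta(1).
\]
Letting $\delta \to 0$ and using the lower semi-continuity of $I_{[0,T']}$ gives $I_{[0,T']}(\pa{x}) \geq \tilde{I}(\pa{x})$.

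For the reverse inequality, I would proceed symmetrically with the LDP lower bound on open sets. Choosing a tubular neighbourhood of $\pa{x}$ and applying the locally uniform lower bound on the inner conditional probability together with the outer LDP on $\pa{X}^\eps|_{[0,T]}$ produces the matching inequality $\liminf \eps \ln \P_{x_0}[\pa{X}^\eps \in U_\delta(\pa{x})] \geq -\tilde{I}(\pa{x}) - o_\delta(1)$. By the LDP lower bound applied to the open set $U_\delta(\pa{x})$, this yields $\inf_{U_\delta(\pa{x})} I_{[0,T']} \leq \tilde{I}(\pa{x}) + o_\delta(1)$, and lower semi-continuity of $I_{[0,T']}$ together with $\delta \to 0$ gives $I_{[0,T']}(\pa{x}) \leq \tilde{I}(\pa{x})$.

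The main technical obstacle is the handling of the random initial condition $\pa{X}^\eps_T$ in the inner probability: one cannot directly apply a pointwise LDP, and the \emph{locally uniform} version of Assumption~\ref{ass:ldp} is indispensable. The correct mechanism is a sandwich argument in which the event $\set{\pa{X}^\eps_T \in K}$ is used, with $K$ a small compact neighbourhood of $\pa{x}_T$ on which the LDP estimates are uniform, while the complementary event $\set{\pa{X}^\eps_T \notin K}$ is absorbed by the outer LDP upper bound at a rate no better than $-I_{[0,T]}(\pa{x})$, which is negligible compared with $-\tilde{I}(\pa{x})$ modulo the $o_\delta(1)$ errors. This is precisely the type of manipulation formalised by the extended Varadhan lemmas referenced in Section~\ref{sec:varh}.
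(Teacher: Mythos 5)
Your argument follows precisely the route the paper alludes to: the paper gives no written proof of this lemma, only the remark preceding it that the Markov property combined with the locally uniform LDP and the extended Varadhan lemmas of Section~\ref{sec:varh} yields the additivity. Your tube-estimate argument is the standard way to make that precise, and it is correct in substance -- factorise the tube probability at time $T$, use the locally uniform LDP of Assumption~\ref{ass:ldp} to control the inner probability despite the random restart point $\pa{X}^\eps_T$, then match the resulting two-sided tube estimate against the one supplied by the LDP with rate $I_{[0,T']}$.

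There are two small cross-wirings in how you attribute the ingredients, which cancel out but deserve correcting. In the direction $I_{[0,T']}\geq I_{[0,T]}+I_{[T,T']}$ you invoke lower semi-continuity of $I_{[0,T']}$, but that is not what closes the argument there: once the Markov upper bound gives $\limsup_\eps\eps\ln\P_{x_0}[\pa{X}^\eps\in U_\delta(\pa{x})]\leq-I_{[0,T]}(\pa{x})-I_{[T,T']}(\pa{x})+o_\delta(1)$, the matching ingredient is the LDP \emph{lower} bound on the open tube, which directly gives $\liminf_\eps\eps\ln\P_{x_0}[\pa{X}^\eps\in U_\delta(\pa{x})]\geq-\inf_{U_\delta(\pa{x})}I_{[0,T']}\geq-I_{[0,T']}(\pa{x})$ with no semi-continuity needed. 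Conversely, in the direction $I_{[0,T']}\leq I_{[0,T]}+I_{[T,T']}$ you write that the LDP lower bound applied to the open set $U_\delta(\pa{x})$ yields $\inf_{U_\delta(\pa{x})}I_{[0,T']}\leq I_{[0,T]}(\pa{x})+I_{[T,T']}(\pa{x})+o_\delta(1)$; but two lower bounds on the same $\liminf$ cannot be played against each other. What is needed is the LDP \emph{upper} bound on the closed tube $\overline{U_\delta(\pa{x})}$, giving $\limsup_\eps\eps\ln\P_{x_0}[\pa{X}^\eps\in U_\delta(\pa{x})]\leq-\inf_{\overline{U_\delta(\pa{x})}}I_{[0,T']}$, and lower semi-continuity of $I_{[0,T']}$ is then the ingredient that sends $\inf_{\overline{U_\delta(\pa{x})}}I_{[0,T']}$ to $I_{[0,T']}(\pa{x})$ as $\delta\to0$. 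So the roles of the LDP upper/lower bounds and of lower semi-continuity are swapped relative to what you wrote, but once reassigned both directions go through and the proof is sound.
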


% \begin{proof}
%  Identification of rate functions follows from the Markov property of $\pa{X}^\eps$ together with a direct application of 
% \end{proof}

%*** ADD EXAMPLE ***
% \[
%  U(x,y) = \inf \set{ l \geq 0 \mid  \pa{x}_0=x, \, \pa{x}_l=y, \, \d \pa{x}_l = b(\pa{x}_l) \d l + \sigma \d r_l , \, \abs{\frac{\d r_l}{\d l}} = 1 } 
%  \]
% where $$\sigma\sigma^T(x) \equiv \bracket{\quad}_{\sigma\sigma^T(x)}$$ defines the Riemannian metric associated with the diffusion matrix. \medskip
We need now to define the quasi-potential avoiding $A$.
\begin{Def} Let us denote for each $x,y \in E$ and $A\subset E$
$$
U(x,y) = U^A(x,y) \eqdef  \inf_{\substack{T > 0, \pa{x} \in C([0,T], E \setminus A): \\ \, (\pa{x}_0,\pa{x}_T)=(x,y)}} I_{[0,T]}\b{\pa{x}},
$$
the \emph{quasi-potential} or \emph{likelihood-cost} function to go from $x$ to $y$ while avoiding $A$.
\end{Def}
Next, the following assumption is a very mild technical simplification that prevents degenerate cases in which the boundary of $A$ may play a role in the definition of optimal trajectories.
\begin{Ass}\label{ass:boundA} For any $\delta > 0$, $T > 0$, and any $ \pa{x} \in C([0,T], E \setminus \mathring{A})$ with $\pa{x}_0,\pa{x}_T \notin \partial A)$, there exist $\pa{x}^\delta \in C([0,T], E \setminus \overline{A})$ with $\pa{x}_0=\pa{x}_{0}^\delta$ and $\pa{x}_T=\pa{x}_{T}^\delta$ such that 
$I_{[0,T]}[\pa{x}^\delta] \leq I_{[0,T]}[\pa{x}] + \delta$.
\end{Ass}
In short, Assumption~\ref{ass:boundA} ensures that trajectories avoiding $\mathring{A}$ can be modified to avoid $\overline{A}$ at arbitrarily small cost. The role of this assumption is to make the definition of optimal costs using $\overline{A}$ or $\mathring{A}$ equivalent. Indeed, one immediately gets:
\begin{Rem}
 Under Assumption~\ref{ass:boundA}, one has
$$
U^{\overline{A}} = U^{\mathring{A}}.
$$
\end{Rem}
The present work will also resort to variants of the quasi-potential in which trajectories are restricted to lower level-sets $\set{\xi\leq l}$.
\begin{Def} Let $\xi: E \to \R$ be continuous, and denote for each $l \in \R$, $x \in \set{\xi \leq l}$, $x,y \in \set{\xi =l}$ and $A\subset E$
$$
U^{(l)}(x,y) \eqdef U^{(l),A}(x,y) \eqdef  \inf_{\substack{T > 0, \pa{x} \in C([0,T], \set{\xi \leq l} \setminus A): \\ \, (\pa{x}_0,\pa{x}_T)=(x,y)}} I_{[0,T]}\b{\pa{x}},
$$
the \emph{quasi-potential} or \emph{likelihood-cost} function to go from $x$ to $y$ in $\set{\xi \geq l}$ while avoiding $A$. Note that:
$$
U^{(l)} \geq U .
$$
\end{Def}

\begin{Rem}
Under Assumption~\ref{ass:boundA}, one has
$$
U^{(l),\overline{A}} = U^{(l),\mathring{A}}.
$$
\end{Rem}
It is also useful to remark that costs to reach level sets defined by $U$ or $U^{(l)}$ are the same.
\begin{Lem}
 For each level $l$ and initial condition $X_0$, one has by definition and additivity of the rate function
 $$
 U(x_0,\set{\xi \leq l}) = U^{(l)}(x_0,\set{\xi \leq l})
 $$
\end{Lem}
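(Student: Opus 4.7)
The inequality $U(x_0,\set{\xi \leq l}) \leq U^{(l)}(x_0,\set{\xi \leq l})$ is immediate from the definitions, since every trajectory admissible for $U^{(l)}$ (constrained to $\set{\xi \leq l}$) is \textit{a fortiori} admissible for $U$. The content of the lemma is the reverse inequality, which I would establish by a first-hitting-time truncation argument combined with the additivity property of the rate function proved just above.

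Concretely, fix $\delta>0$ and select $T>0$ together with $\pa{x} \in C([0,T], E \setminus \overline{A})$ (Assumption~\ref{ass:boundA} lets us use $\overline{A}$ in place of $\mathring{A}$ without loss of generality) such that $\pa{x}_0 = x_0$, $\pa{x}_T \in \set{\xi \leq l}$, and $I_{[0,T]}(\pa{x}) \leq U(x_0,\set{\xi \leq l}) + \delta$. If $\xi(x_0) \leq l$ the result is trivial since one may take the constant trajectory and both sides vanish; in the meaningful regime the path $\pa{x}$ must first cross the level $l$. Define
\[
\tau \eqdef \tau_l^-(\pa{x}) = \inf\set{t \geq 0 : \xi(\pa{x}_t) \geq l},
\]
which by continuity of $\xi \circ \pa{x}$ satisfies $\tau \leq T$ and $\xi(\pa{x}_\tau) = l$.

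By the very definition of $\tau$, the truncated trajectory $\pa{x}|_{[0,\tau]}$ takes values in $\set{\xi \leq l} \setminus \overline{A}$ and has endpoints $(x_0, \pa{x}_\tau)$ with $\pa{x}_\tau \in \set{\xi = l} \subset \set{\xi \leq l}$, hence it is admissible for $U^{(l)}(x_0, \pa{x}_\tau)$. Invoking the additivity identity from the preceding lemma,
\[
U^{(l)}(x_0, \pa{x}_\tau) \leq I_{[0,\tau]}(\pa{x}|_{[0,\tau]}) \leq I_{[0,\tau]}(\pa{x}|_{[0,\tau]}) + I_{[\tau,T]}(\pa{x}|_{[\tau,T]}) = I_{[0,T]}(\pa{x}) \leq U(x_0,\set{\xi \leq l}) + \delta.
\]
Taking the infimum over endpoints on the left and letting $\delta \to 0$ delivers $U^{(l)}(x_0,\set{\xi \leq l}) \leq U(x_0,\set{\xi \leq l})$.

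There is no real obstacle; the only point requiring care is verifying that truncating at the first hitting time of $\set{\xi \geq l}$ yields a path genuinely admissible for $U^{(l)}$, which is what the definition of $\tau$ combined with continuity of $\xi$ provides, and that discarding the tail of the trajectory cannot increase the cost, which is exactly the additivity property of the rate function on disjoint time intervals.
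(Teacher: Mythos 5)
Your truncation-plus-additivity strategy is exactly the paper's (one-line) proof: stop a near-minimizer for $U$ at the first hitting time of $\set{\xi = l}$ and observe that the truncated path is admissible for $U^{(l)}$.

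However, as written your argument contains a regime error that would break the key step. You declare the ``meaningful regime'' to be $\xi(x_0) > l$ (treating $\xi(x_0)\leq l$ as trivial), yet you then set $\tau = \tau_l^-(\pa{x}) = \inf\set{t\geq0 : \xi(\pa{x}_t)\geq l}$. When $\xi(x_0) > l$ one gets $\tau = 0$, so $\xi(\pa{x}_\tau)=\xi(x_0) > l$, and the truncated trajectory $\pa{x}|_{[0,\tau]}$ is the single point $x_0\notin\set{\xi\leq l}$ — it is not admissible for $U^{(l)}$, and your claim ``$\xi(\pa{x}_\tau)=l$'' fails. What has actually happened is that the statement as printed in the paper has a typo: the argument of $U$ and $U^{(l)}$ should be $\set{\xi\geq l}$ (equivalently $\set{\xi=l}$), not $\set{\xi\leq l}$; this is consistent with the paper's own proof (which truncates at the first hitting of $\set{\xi=l}$), with the implicit standing assumption $\xi(x_0)<l$, and with the equality $U(x_0,\set{\xi=l})=U^{(l)}(x_0,\set{\xi=l})$ invoked in the proof of Theorem~\ref{th:suff}. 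Under that corrected reading your argument is right once you swap your two regimes: the trivial case is $\xi(x_0)\geq l$, the meaningful case is $\xi(x_0)<l$, in which the near-minimizer for $U(x_0,\set{\xi\geq l})$ stays in $\set{\xi<l}$ until time $\tau$ and hits $\set{\xi=l}$ there, so the truncation does lie in $\set{\xi\leq l}\setminus\overline{A}$ and is admissible for $U^{(l)}$, and additivity of the rate function gives $I_{[0,\tau]}\leq I_{[0,T]}$ as you say.
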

\begin{proof}
 Minimizers of $U(\,\,.,\set{\xi \leq l})$ can be stopped at the first hitting time of $\set{xi=l}$ to obtain minimizers of $U^{(l)}(\,\,.,\set{\xi \leq l})$.
\end{proof}

We then need a finiteness and continuity assumption on the cost to reach level sets; broadly speaking, ensures that the cost to reach level sets is finite and the cost to 'infinitesimally increase' levels is zero.

\begin{Ass}\label{ass:cont} %Let $T>0$ be given\footnote{\textit{Remark:} the precise value of $T$ does not play any role in this assumption.\fred{pourquoi pas l'inf sur $T>0$ alors ?}}. 
For any intial condition $x\in\set{\lmin \leq \xi \leq \lmax}$, the cost $U(x,\set{\xi \leq \lmax}) < +\infty$ is finite and the cost to immediately enter the open set $\set{\xi > \xi(x)}$ is zero. Formally:
\[
 \inf_{\substack{T > 0, \, \pa{x}:\, \pa{x}_0 = x , \\ \tau^+_{\xi(x)}(\pa{x}) = 0}} I_{[0,T]}[\pa{x}] = 0.
\]
\end{Ass}
In particular, this assumption is the most important assumption required to obtain the continuity (with respect to level) of the cost to enter a level-set; as is stated in Lemma~\ref{lem:cont:UL} below.\medskip

The most demanding assumption is the following. It is similar to Condition (16.22) in \cite{budhi-dupuis-book}. It implies in particular that the reference set $A$ contains all the possible attractors of the dynamics the deterministic dynamics $\pa{X}^{\eps=0}$ with initial condition $x_0$. % (see Remark~\ref{rem:attract}, Item~4)).
 \begin{Ass}\label{ass:time}
 %Uniformly in the initial condition $x \in \set{\xi \in [0,1]}$, 
 The process $\pa{X}^\eps$ with initial condition $x_0$ reaches the interior of the reference set $\mathring{A}$ with a probability exponentially close to $1$ when $\eps \to 0$; the associated rate being arbitrary for large enough times. Rigorously:
 $$
 \limsup_{T \to + \infty} \limsup_{\eps \to 0} \eps \ln \P_{\pa{X}^\eps_0=x_0}\b{\tau_{\mathring{A}}(\pa{X}^\eps) > T} = - \infty.
 $$
 %(Nota Bene: Later one we will denote $\tau^{+}_A \eqdef \tau_{\mathring{A}} $).
 \end{Ass}

 \begin{Rem}\label{rem:attract} Assumption~\ref{ass:time} is not satisfied rigorously in many practical situations because practitioners usually do not include in $A$ all the attractors, or even critical points, of the deterministic dynamics $\pa{X}^{\eps=0}$; but only consider those 'close' to the initial condition. However one should remark that:
 \begin{itemize}
  \item When $\eps \to 0$ attractors outside of $A$ will considerably slow down the splitting algorithms, since some trajectories may be stuck in one of the latter for a very large time before reaching $A$. It is a practical argument that shows that Assumption~\ref{ass:time}, although perhaps not minimal, is not a superfluous assumption.
  
  \item It is possible to consider the formal limit $A \to \emptyset$ in the present work, the various minimizations problems of interest -- for instance defining the functions $U$, $U^{(l)}$, or $\loss$, still being well defined (and even sometimes continuous) in this limit.
 \end{itemize}

 \end{Rem}

 The most important consequence of the above assumptions is the following lemma, which interprets the quasi-potential as the rate of vanishing of the rare event probability in the small noise limit.
 \begin{Lem}\label{lem:unif_proba} Let~\Cref{ass:ldp,,ass:cont,,ass:boundA,,ass:time} hold true. Let $x \in \set{\xi \geq \lmin}$, $l\in [\lmin,\lmax]$ be given and define
 $$
 q^\eps_l(x) = \P_{\pa{X}^\eps_{0}=x}[\tau_{l}(\pa{X}^\eps) < \tau_{A}(\pa{X}^\eps)] ,
 $$
 that is the probability to reach the level $l$ before $A$ starting from $x$. Then:
 $$
 \lim_{\substack{\eps \to 0 \\ x \to x_0}} - \eps \log q^\eps_l(x) = U(x_0,\set{\xi = l}) \eqdef  \inf_{\{y : \xi(y)=l\}} U(x_0,y) .
 $$
\end{Lem}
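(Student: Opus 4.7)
The plan is to match upper and lower bounds using the two sides of the uniform LDP of Assumption~\ref{ass:ldp}. Set $U^\ast \eqdef U(x_0,\set{\xi=l})$; we may assume $\xi(x_0) < l$ (the opposite case being trivial), so that by continuity of $\xi$ and of trajectories the first hit of $\set{\xi \geq l}$ lies in $\set{\xi = l}$ and $U(x_0,\set{\xi \geq l})=U^\ast$.

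For the upper bound, I would fix $T > 0$ and decompose
\[
\set{\tau_l(\pa{X}^\eps) < \tau_A(\pa{X}^\eps)} \subset \set{\pa{X}^\eps \in C_T} \cup \set{\tau_{\mathring{A}}(\pa{X}^\eps) > T},
\]
where $C_T \eqdef \set{\pa{x} \in C([0,T],E): \exists\, s \leq T,\, \pa{x}_s \in \set{\xi \geq l},\, \pa{x}_{[0,s]} \cap \mathring{A} = \emptyset}$ is closed in the uniform topology (closedness of $\set{\xi \ge l}$ and of $E\setminus \mathring{A}$ plus a short argument on hitting times of limits of paths). The uniform LDP upper bound on $C_T$, after truncating any admissible path at its first hit of $\set{\xi = l}$ and invoking additivity of $I_{[0,T]}$, bounds $\limsup_{(x,\eps)\to(x_0,0)}\eps\log \P_x[\pa{X}^\eps \in C_T]$ from above by $-\inf_{T' \leq T}\inf_{\pa{x}_0 = x_0,\pa{x}_{T'} \in \set{\xi = l},\pa{x}_{[0,T']}\cap \mathring A = \emptyset} I_{[0,T']}(\pa{x})$, a quantity that decreases to $-U^\ast$ as $T \to \infty$—Assumption~\ref{ass:boundA}, equating $U^{\mathring A}$ with $U^{\overline A}$, is invoked here. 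Assumption~\ref{ass:time} yields that $\limsup_\eps \eps\log \P_x[\tau_{\mathring A} > T] \to -\infty$ as $T \to \infty$, which kills the second term in the limit, giving $\limsup \eps \log q^\eps_l(x) \leq -U^\ast$.

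For the lower bound, fix $\delta > 0$. Using the definition of $U^\ast$ together with Assumption~\ref{ass:boundA}, choose a path $\pa{x}^\delta \in C([0,T^\delta], E \setminus \overline{A})$ from $x_0$ to a point of $\set{\xi = l}$ with $I_{[0,T^\delta]}(\pa{x}^\delta) \leq U^\ast + \delta$. By Assumption~\ref{ass:cont}, append a short piece driving the endpoint strictly into $\set{\xi > l}$ at additional cost at most $\delta$; call the extension $\tilde{\pa{x}}^\delta$, defined on $[0,\tilde T]$, of total cost $\leq U^\ast + 2\delta$, with endpoint in $\set{\xi > l}$ and avoiding $\overline{A}$. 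Since $\tilde{\pa{x}}^\delta$ is a compact curve, there exists $r > 0$ such that the open tube $O \eqdef \set{\pa{x}: \|\pa{x} - \tilde{\pa{x}}^\delta\|_\infty < r}$ consists only of paths that avoid $A$ throughout $[0,\tilde T]$ and end in $\set{\xi > l}$; for $x$ within $r$ of $x_0$, any such realization satisfies $\tau_l(\pa{x}) < \tilde T \leq \tau_A(\pa{x})$ by the intermediate value theorem applied to $s \mapsto \xi(\pa{x}_s)$, and hence $\P_x[\pa{X}^\eps \in O] \leq q^\eps_l(x)$. The uniform LDP lower bound then yields $\liminf \eps \log q^\eps_l(x) \geq -I(\tilde{\pa{x}}^\delta) \geq -(U^\ast + 2\delta)$, and $\delta \to 0$ concludes.

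The main obstacle is the upper bound: the LDP upper bound applies only to closed sets while $U^\ast$ is defined via trajectories strictly avoiding $\mathring{A}$, so one must carefully verify that the relevant infima agree (role of Assumption~\ref{ass:boundA}) and, separately, control the "trapped" event $\set{\tau_A > T}$ at finite horizons (role of Assumption~\ref{ass:time}). The lower bound is more routine once the quasi-optimal path is arranged to end strictly above level $l$ via Assumption~\ref{ass:cont}, which is precisely what opens a robust tube around it.
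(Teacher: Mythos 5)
Your proof is correct and follows essentially the same route as the paper's: an upper bound obtained by decomposing the event into a closed set with bounded horizon (treated by the uniform LDP upper bound, with Assumption~\ref{ass:boundA} implicitly identifying the $\mathring{A}$-avoiding infimum with $U$) plus the tail event $\set{\tau_A > T}$ (handled by Assumption~\ref{ass:time}), and a lower bound obtained from a near-optimal path extended strictly past level $l$ using Assumption~\ref{ass:cont}, enclosed in an open set to which the LDP lower bound applies. The only cosmetic differences are in the lower bound: you use a uniform-norm tube around the near-optimal path to manufacture the open set, whereas the paper invokes Lemma~\ref{lem:lsc} to work with the open set $\set{\tau_l^+(\pa{x}) < t_\ast < \tau_A^-(\pa{x})}$ directly, and you extend past $l$ via Assumption~\ref{ass:cont} in one step where the paper first passes through Lemma~\ref{lem:cont:UL}; both give the same result.
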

This result is classical. We give a self-contained, warm-up proof adapted to the setting of this work in Section~\ref{sec:unif_proba}. The latter proof justifies the role of the proposed set of assumptions. \medskip

Finally an already mentioned consequence of the above assumptions is the continuity of costs to reach level sets. The proof is also postponed to Section~\ref{sec:unif_proba}.
\begin{Lem} 
\label{lem:cont:UL}Let~\Cref{ass:ldp,,ass:cont,,ass:boundA,,ass:time} hold true. For all $x\in\set{\lmin \leq \xi \leq \lmax}$, the map 
 $
 l \mapsto U\p{x,\set{\xi \geq l}}
 $
is continuous on $[\lmin, \lmax]$.
\end{Lem}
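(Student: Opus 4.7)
The plan is to prove monotonicity of the map $l\mapsto U(x,\set{\xi\geq l})$, then deduce right- and left-continuity by separate arguments based on near-minimizers. Monotonicity is immediate: if $l\leq l'$ then $\set{\xi\geq l'}\subset \set{\xi\geq l}$, so any trajectory admissible for the larger target level is admissible for the smaller one, which gives $U(x,\set{\xi\geq l})\leq U(x,\set{\xi\geq l'})$.

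For right-continuity, I would fix $\epsilon>0$ and pick a near-minimizer $\pa{x}\in C([0,T], E\setminus\mathring A)$ with $\pa{x}_0=x$, $I_{[0,T]}[\pa{x}]\leq U(x,\set{\xi\geq l})+\epsilon$, stopped at the first hitting time of $\set{\xi\geq l}$ so that $\xi(\pa{x}_T)=l$. Applying Assumption~\ref{ass:cont} at the point $y=\pa{x}_T$ produces a trajectory $\pa{y}$ starting at $y$ with $\tau^+_l(\pa{y})=0$ and $I[\pa{y}]\leq\epsilon$. By definition of $\tau^+_l$ and continuity of $\xi$, there exists $t_0>0$ for which $\xi(\pa{y}_{t_0})>l$, say $\xi(\pa{y}_{t_0})\geq l+\delta_0$ for some $\delta_0>0$. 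Concatenating $\pa{x}$ with $\pa{y}|_{[0,t_0]}$, and invoking Assumption~\ref{ass:boundA} to perturb into $E\setminus\overline{A}$ at arbitrarily small extra cost, yields an admissible trajectory from $x$ to $\set{\xi\geq l+\delta_0}$ whose cost is bounded by $U(x,\set{\xi\geq l})+3\epsilon$. Hence $U(x,\set{\xi\geq l'})\leq U(x,\set{\xi\geq l})+3\epsilon$ for every $l'\in(l,l+\delta_0]$, giving right-continuity.

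For left-continuity, let $l_n\uparrow l$ and set $V\eqdef\lim_n U(x,\set{\xi\geq l_n})\leq U(x,\set{\xi\geq l})$, which is finite by Assumption~\ref{ass:cont}. Fix $\epsilon>0$ and choose near-minimizers $\pa{x}^n\in C([0,T_n],E\setminus\mathring A)$ realizing cost at most $V+\epsilon$ and stopped at the first hitting time of $\set{\xi\geq l_n}$. Using the reparametrization-invariant, time-free form of the quasi-potential recalled in Section~\ref{sec:FW}, I may view the $\pa{x}^n$ as elements of $C([0,1],E)$ with a common upper bound on the geometric action. Goodness of the rate function then yields relative compactness of this sublevel set, so I can extract a subsequence converging uniformly to some $\pa{x}^\infty$ with $\pa{x}^\infty_0=x$ and $\xi(\pa{x}^\infty_1)\geq l$ by continuity of $\xi$. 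Lower semi-continuity of the action gives $I[\pa{x}^\infty]\leq V+\epsilon$, while Assumption~\ref{ass:boundA} permits perturbing $\pa{x}^\infty$ into $E\setminus\overline A$ at extra cost $\epsilon$. Altogether $U(x,\set{\xi\geq l})\leq V+2\epsilon$, and letting $\epsilon\to 0$ concludes.

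The main obstacle is the left-continuity step, where near-minimizers may have unbounded time durations $T_n$ and may accumulate on $\partial A$; the former is handled by passing to the reparametrization-invariant formulation of $U$, and the latter is precisely the situation that Assumption~\ref{ass:boundA} was introduced to address. The right-continuity step is comparatively routine, relying entirely on Assumption~\ref{ass:cont} to supply a short cheap extension that pushes the endpoint strictly above level $l$.
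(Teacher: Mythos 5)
Your right-continuity argument is essentially the paper's own: take a near-minimizer to level $l$, use Assumption~\ref{ass:cont} to append a cheap extension that pushes the endpoint strictly above level $l$, deduce that the cost to reach $\set{\xi\geq l'}$ is close to the cost to reach $\set{\xi\geq l}$ for $l'$ slightly above $l$, and conclude by monotonicity. That part is fine.

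The left-continuity argument has a genuine gap, and it is exactly at the step you flag as the main obstacle. You propose to reparametrize the near-minimizers $\pa{x}^n$ to $C([0,1],E)$ and then invoke ``goodness of the rate function'' to get relative compactness. But Assumption~\ref{ass:ldp} gives goodness only of the time-indexed rate functions $I_{[0,T]}$ on $C([0,T],E)$ for each fixed $T$; it says nothing about the geometric action in the reparametrization-invariant form. And the geometric action does \emph{not} have compact sublevel sets in $C([0,1],E)$: the integrand $\abs{\dot{\pa{x}}}_{g}\abs{b}_{g}-\bracket{\dot{\pa{x}},b}_{g}$ vanishes whenever $\dot{\pa{x}}$ is a nonnegative multiple of $b$, so arbitrary zigzags along deterministic-flow directions add zero cost, and the zero-sublevel set is already noncompact unless the flow degenerates. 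Moreover, the geometric form of $U$ is only introduced in the finite-dimensional SDE setting of Section~\ref{sec:FW}, whereas the lemma is stated in the general Polish-space setting where no Sobolev/reparametrization structure is available. The paper instead copes with the unbounded durations $T_n$ by using Assumption~\ref{ass:time}: long excursions avoiding $A$ become, via the uniform LDP, arbitrarily costly, so near-minimizers of a bounded cost can be taken with a uniformly bounded horizon $T$, after which goodness of $I_{[0,T]}$ on $C([0,T],E)$ legitimately yields the compactness you need. Without an argument of that type (or an independent proof that the reparametrized sublevel sets are compact, which is false here), the extraction of $\pa{x}^\infty$ is unjustified.
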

%\fred{on ne montre que la continuite a droite ? Etudier la continuite de I ?}

We can now explicitly state a simple example of conditions on $\xi$ and on the finite dimensional SDE~\eqref{eq:SDE} under which the latter assumptions hold true.

\begin{Lem} Consider the SDE~\eqref{eq:SDE} taking values in $\R^d$. Assume $b = - \nabla V$ and that $\sigma = \sqrt{2}\,\Id$.
\begin{enumerate}[1)]
 \item Assume that $\nabla V$ is globally Lipschitz continuous, then Assumption~\ref{ass:ldp} holds true.
 
 \item Assume that $A = \set{\xi \leq l_A}$ with $l_A < \lmin$, $\xi$ is smooth and has no critical point that is
 $
  \nabla \xi \neq 0
 $
 on the set $\set{\xi \in [l_A- \delta, l_A+\delta]}$ for some $\delta$. Then Assumption~\ref{ass:boundA} holds true.
 
 \item Assume that $\xi$ is smooth and has no critical point that is
 $
  \nabla \xi \neq 0
 $
 on the set $\set{\xi \in [\lmin, \lmax]}$, then Assumption~\ref{ass:cont} holds true.
 
 \item Assume that $ \abs{ \nabla V}$ is bounded \emph{away from $0$} on $\R^d\setminus A$. Then Assumption~\ref{ass:time} holds true.
 
\end{enumerate}

% \OLD{
% \begin{enumerate}[1)]
%  \item Assume that $\nabla V$ and $\sigma$ are globally Lipschitz continuous, then Assumption~\ref{ass:ldp} holds true.
%  
%  \item Assume that $A = \set{\xi \leq l_A}$ with $l_A < \lmin$, $\xi$ is smooth and 
%  $$
%  \nabla^T \xi \sigma \sigma^T \nabla \xi > 0
%  $$
%  on the set $\set{\xi \in [l_A- \delta, l_A+\delta]}$ for some $\delta$. Then Assumption~\ref{ass:boundA} holds true.
%  
%  \item Assume that $\xi$ is smooth and 
%  $$
%  \nabla^T \xi \sigma \sigma^T \nabla \xi > 0
%  $$
%  on the set $\set{\xi \in [\lmin, \lmax]}$, then Assumption~\ref{ass:cont} holds true.
%  
%  \item If the solution to the EDO~\eqref{eq:SDE} for $\eps=0$ satisfies $\pa{X}^{\eps=0}_t \in A$ for all $t$ large enough, then Assumption~\ref{ass:time} holds true.
% \end{enumerate}
% }
\end{Lem}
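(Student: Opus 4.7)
Each item corresponds to verifying one of the four assumptions; they are logically independent, so the plan is to address them in turn. For item~(1), the uniform Freidlin--Wentzell LDP for SDEs with globally Lipschitz drift and constant non-degenerate diffusion is classical (see e.g.\ Theorem~5.6.7 of \cite{DemZei98} or \cite{FreiWen12}); the local uniformity in the initial condition is part of the standard formulation and follows from pathwise continuous dependence of $\pa{X}^\eps$ on its starting point via a Gronwall estimate applied to~\eqref{eq:SDE}.

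For items~(2) and~(3), the key geometric object is the vector field $W(x) \eqdef \chi(\xi(x))\,\nabla\xi(x)/|\nabla\xi(x)|^2$, where $\chi$ is a smooth cutoff equal to $1$ near the relevant level set and supported where $\nabla\xi\neq 0$. By construction $W\cdot\nabla\xi = \chi(\xi)$, so the flow $(\Phi^s)$ of $W$ increases $\xi$ at unit speed on the level sets of interest. For item~(2), since $\pa{x}$ avoids $\mathring A = \set{\xi<l_A}$ and its endpoints avoid $\partial A$, both endpoints lie in the open set $\set{\xi>l_A}$; picking $\psi\in C^1([0,T])$ with $\psi_0=\psi_T=0$ and $\psi>0$ in between, I set $\pa{x}^\delta_t \eqdef \Phi^{\mu\psi_t}(\pa{x}_t)$, which for $\mu$ small enough stays in $\set{\xi>l_A}\subset E\setminus\overline A$ and matches endpoints. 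A direct $C^1$-perturbation estimate, using Lipschitz continuity of $b$ together with $\dot{\pa{x}}\in L^2$ (a consequence of $I_{[0,T]}(\pa{x})<\infty$ and boundedness of $b$ on the compact image of $\pa{x}$), yields $I_{[0,T]}(\pa{x}^\delta)\leq I_{[0,T]}(\pa{x}) + C(\pa{x})\mu$, so that $\mu\leq \delta/C(\pa{x})$ concludes. For item~(3), the finiteness statement is obtained by following the integral curve of $W$ from $x$: this curve remains in $\set{\lmin\leq\xi\leq\lmax}$ (disjoint from $A$, since $l_A<\lmin$) and reaches $\set{\xi\geq\lmax}$ in finite time with finite action because both $W$ and $\nabla V$ are bounded on that strip. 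The zero-cost immediate exit condition is then obtained from the family of short trajectories $\pa{x}_t \eqdef \Phi^{ct}(x)$ on $[0,T]$: one has $\xi(\pa{x}_t)>\xi(x)$ for every $t>0$, hence $\tau^+_{\xi(x)}(\pa{x})=0$, while $I_{[0,T]}(\pa{x})\leq T\cdot\mrm{const}\to 0$ as $T\to 0$.

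For item~(4), the plan is to use the explicit identity (valid for $b=-\nabla V$ and $\sigma=\sqrt{2}\,\Id$, so $g=\Id/2$)
\[
 I_{[0,T]}(\pa{x}) \;=\; \tfrac14\int_0^T|\dot{\pa{x}}_t|^2\,\dd t \;+\; \tfrac12\bigl[V(\pa{x}_T)-V(x_0)\bigr] \;+\; \tfrac14\int_0^T|\nabla V(\pa{x}_t)|^2\,\dd t ,
\]
obtained by expanding $|\dot{\pa{x}}+\nabla V|^2$ and integrating the exact differential $\dot{\pa{x}}\cdot\nabla V$. Under the hypothesis $|\nabla V|\geq c>0$ on $\R^d\setminus A$ (supplemented by $V$ bounded below, which is natural since $A$ absorbs the gradient flow), any trajectory starting at $x_0$ and avoiding $\mathring A$ on $[0,T]$ satisfies
\[
 I_{[0,T]}(\pa{x}) \;\geq\; \tfrac{c^2}{4}T + \tfrac12 \inf V - \tfrac12 V(x_0) .
\]
The event $\set{\tau_{\mathring A}(\pa{X}^\eps)>T}$ being closed in $C([0,T],\R^d)$, Assumption~\ref{ass:ldp} gives $\limsup_{\eps\to 0}\eps\ln \P_{x_0}\b{\tau_{\mathring A}(\pa{X}^\eps)>T} \leq -c^2 T/4 + C_0$, and letting $T\to+\infty$ concludes. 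The step I expect to be the main obstacle for a completely rigorous write-up is precisely this lower bound: it depends on $V$ being bounded below, which the stated hypothesis $|\nabla V|\ge c$ does not by itself guarantee, and must either be added explicitly or deduced from a coercivity/compactness property implicit in the modelling context.
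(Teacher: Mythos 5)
Your argument follows the paper's own proof quite closely for all four items. For item~1 both simply invoke the classical Freidlin--Wentzell result. For item~2 the paper constructs a tubular-neighborhood diffeomorphism $\chi_\delta$ of $\partial A$ that is $C^1$-close to the identity, whereas you realize the same idea by flowing along the normalized gradient field $W = \chi(\xi)\nabla\xi/|\nabla\xi|^2$ for a small, endpoint-vanishing amount of fictitious time; the two constructions are interchangeable, and each reduces the claim to $C^1$-stability of the rate functional. Your item~3 is in fact more complete than the paper's: the paper only exhibits the short-time straight-line trajectory $t\mapsto x_0 + t\nabla\xi(x_0)$ to get the zero-cost immediate exit and says nothing about finiteness of $U(x,\set{\xi\geq\lmax})$, while you cover finiteness as well by running the integral curve of $W$ across the level strip, which is something Assumption~\ref{ass:cont} actually requires.

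For item~4 you use exactly the same gradient-system identity as the paper, and you are right to single out the step that the published proof glosses over. After dropping $\tfrac14\int_0^T|\dot{\pa{x}}|^2$ and bounding $\tfrac14\int_0^T|\nabla V(\pa{x}_t)|^2 \geq \tfrac{\kappa^2}{4}T$, the paper replaces the boundary term $\tfrac12(V(\pa{x}_T)-V(\pa{x}_0))$ by $-\tfrac12 V(\pa{x}_0)$, which silently uses $V(\pa{x}_T)\geq 0$, that is, a lower bound on $V$ outside $A$. The hypothesis $|\nabla V|\geq \kappa>0$ on $\R^d\setminus A$ does not imply this (take $V(x)=-x$ on $\R$ with $A=\set{x\leq 0}$: then $|\nabla V|\equiv 1$ but $\pa{X}^\eps$ drifts away from $A$ and Assumption~\ref{ass:time} fails). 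So the conclusion does need an additional boundedness-below or coercivity hypothesis on $V$, or a geometric hypothesis on $A$ (e.g.\ a bounded sub-level set trapping the gradient flow). The paper's own proof carries the same implicit assumption; your flagging of it is a correct and useful observation, not a defect in your write-up.
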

\begin{proof}
 Item 1) is the classical Freidlin-Wentzell LDP, see~\cite{DemZei98,Var84}. \medskip
 
 Item 2) can be proved by using a diffeomorphism in the neighbourhood of $\partial A$ that approximates the identity. Indeed, since $\xi$ is smooth and has no critical point, $A$ is a smooth domain, and one can locally in the neighbourhood of $\partial A$ consider a smooth set of new coordinates of $\R^d$ the form $(y,r) \in \partial A \times \R$ where $x = y + r \nabla_y \xi$. One can then set $\chi_\delta(x) = y + k_\delta(r) \nabla_y \xi$ where $k_\delta$ is smooth and strictly increasing, $k_\delta(0) > 0$, $k_\delta(r) = r$ outside $[-\delta,+\delta]$ and $\norm{k_\delta'}_\infty < \delta$. Hence $\chi_\delta$ converges to the identity in $C^1$. By construction if $\pa{x}$ does not intersect $\mathring{A}$, then $\chi_\delta(\pa{x})$ does not intersect $\overline{A}$; yet on the other hand $I[\chi_\delta(\pa{x})] \to_{\delta \to 0} I[\pa{x}]$ and the result follows. \medskip
 
 Item 3) is somehow similar to Item 2), yet much simpler. Indeed, just consider the trajectory $\pa{x}_t = t \nabla_{x_0} \xi$ and the result follows. \medskip
 
 Item 4) By assumption $\abs{\nabla V} \geq \kappa > 0$ on set $\R^d\setminus A$. The classical Freidlin-Wentzell formula for the rate function can be rewritten in the form:
 \begin{align*}
  I_{[0,T]}\b{\pa{x}} & = \frac14 \int_0^T \abs{\dot{\pa{x}}}^2 \dd t + \frac14 \int_0^T \abs{\nabla V ( \pa{x}_t) }^2 \dd t + \frac12\p{V(\pa{x}_t) - V(\pa{x}_0)} \\
  & \geq \frac14 \kappa^2 T - \frac12V(\pa{x}_0) \xrightarrow[T \to +\infty]{} + \infty
 \end{align*}

\end{proof}

\begin{Rem} Although it is not done in practice, it might be interesting to include in the reference set $A$ the subset $\set{\abs{\nabla V} \leq \kappa }$ for a well-chosen small $\kappa$, see Remark~\ref{rem:attract}.
\end{Rem}

\subsection{Main results}\label{sec:results}

We can now state rigorously the main results of the present work, whose proof is postponed to Section~\ref{sec:proofs}. The first (and most prominent) result provides the small noise asymptotics of the (large sample size asymptotic) variance formula $\p{\sigma^\eps_{\lmax,\mrm{ams}}}^2$ of the AMS rare event probability estimator $p^{\eps,N}_{\lmax,\mrm{ams}}$, as defined by~\eqref{eq:var_in_th}. We have recalled in Section~\ref{sec:algos} that, under mild assumptions, one has indeed $\p{\sigma^\eps_{\lmax,\mrm{ams}}}^2 = \lim_{N \to + \infty} \frac1N \Var( p^{\eps,N}_{\lmax,\mrm{ams}} )$.

\begin{The}[Small-noise asymptotics of AMS fluctuations] 
\label{th:maintheo}
Let the variance of an AMS probability estimator $\sigma^\eps_{\lmax,\mrm{ams}}$ be defined by~\eqref{eq:var_in_th}. Under~\Cref{ass:ldp,,ass:cont,,ass:time,,ass:boundA} the following holds true:
\begin{align*}
\lim_{\eps \to 0} &\eps \log\b{\p{\sigma^\eps_{\lmax,\mrm{ams}}}^2/p_{\lmax}^2 } 
\\ &= \sup_{l \in [\xi(x_0),\lmax]} \lim_{\eps \to 0} \eps \log \mrm{Var}_{\eta_{l}^\eps} \p{ q^\eps_{\lmax}  {p^\eps_{l}} / {p^\eps_{\lmax}} }
 = \sup_{l \in [\xi(x_0),\lmax]}\loss(l),
\end{align*}
where the loss functions is defined by:
\begin{equation}\label{eq:loss_in_th}
 \loss(l) \eqdef 2 U(x_0,\set{\xi = \lmax}) - \inf_{\set{\xi = l}} \b{U^{(l)}(x_0, \, . \, ) + 2 U( \, . \,,\set{\xi = \lmax}) } - U(x_0,\set{\xi = l}).
\end{equation}
Moreover there exist at least one \emph{critical level} $l_\ast \in ]\xi(x_0),\lmax[$ such that $\sup_{l \in [\xi(x_0),\lmax]}\loss(l)=\loss(l_\ast)$.
\end{The}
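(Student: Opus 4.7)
The plan is to isolate the dominant contribution of the large sample size variance formula~\eqref{eq:var_in_th} in the small noise limit, and to evaluate it via a Laplace-type principle in the level variable. As a warm-up, once divided by $(p^\eps_\lmax)^2$ the additive term $-(p^\eps_\lmax)^2 \ln p^\eps_\lmax$ contributes only $-\ln p^\eps_\lmax = O(1/\eps)$ by Lemma~\ref{lem:unif_proba}, so $\eps \log$ of this term is $O(\eps \log(1/\eps)) \to 0$: it is sub-exponential and can be discarded. Moreover for $l' < \xi(x_0)$ the conditional law $\eta^\eps_{l'}$ is the Dirac mass at $x_0$ so the integrand in~\eqref{eq:var_in_th} vanishes. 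The whole problem therefore reduces to identifying the $\eps \log$-asymptotics of
\[
\mathcal{I}^\eps \eqdef \frac{2}{(p^\eps_\lmax)^2} \int_{\xi(x_0)}^{\lmax} \mrm{Var}_{\eta^\eps_{l'}}(q^\eps_\lmax) \, p^\eps_{l'} \, d(-p^\eps_{l'}).
\]

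The second step is a Laplace-type principle for $\mathcal{I}^\eps$ on $[\xi(x_0), \lmax]$. By Lemmas~\ref{lem:unif_proba} and~\ref{lem:cont:UL}, the map $l' \mapsto p^\eps_{l'}$ is log-equivalent to $\exp(-u(l')/\eps)$ with continuous limiting rate $u(l') \eqdef U(x_0, \set{\xi = l'})$. Partitioning the level interval into fine subintervals and using the Stieltjes mass bound $p^\eps_{l'-\delta} - p^\eps_{l'} \leq p^\eps_{l'-\delta}$ for the upper estimate, together with a matching lower bound obtained by localizing near an approximate maximizer, I expect
\[
 \lim_{\eps \to 0} \eps \log \mathcal{I}^\eps = \sup_{l' \in [\xi(x_0), \lmax]} \Bigl\{ \lim_{\eps \to 0} \eps \log \mrm{Var}_{\eta^\eps_{l'}}(q^\eps_\lmax) + 2 u(\lmax) - 2 u(l') \Bigr\},
\]
which is precisely the intermediate rewriting $\sup_{l'} \lim_\eps \eps \log \mrm{Var}_{\eta^\eps_{l'}}(q^\eps_\lmax p^\eps_{l'}/p^\eps_\lmax)$ announced in the theorem.

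The technical core of the proof is to identify $\lim_{\eps \to 0} \eps \log \mrm{Var}_{\eta^\eps_{l'}}(q^\eps_\lmax)$ at each level $l'$. Using the strong Markov property at $\tau^-_{l'}(\pa{X}^\eps)$, the contraction principle for the first-exit map of $\set{\xi \leq l'}$, and Assumption~\ref{ass:boundA}, the conditional law $\eta^\eps_{l'}$ satisfies an LDP on $\set{\xi = l'}$ with rate function $y \mapsto U^{(l')}(x_0, y) - U(x_0, \set{\xi = l'})$. Combining this LDP with the pointwise asymptotics $q^\eps_\lmax(y) \sim \exp(-U(y, \set{\xi = \lmax})/\eps)$ from Lemma~\ref{lem:unif_proba} via Varadhan's lemma yields
\[
 \lim_{\eps \to 0} \eps \log \eta^\eps_{l'}\b{(q^\eps_\lmax)^2} = -\inf_{y \in \set{\xi = l'}} \Bigl[ U^{(l')}(x_0, y) - U(x_0, \set{\xi = l'}) + 2 U(y, \set{\xi = \lmax}) \Bigr].
\]
The main obstacle is passing from the second moment to the variance: by Cauchy-Schwarz the rate of the second moment is at most the rate of the squared mean $(p^\eps_\lmax/p^\eps_{l'})^2$, hence at log-scale the variance coincides with the second moment \emph{unless} these two rates agree, in which case unwanted cancellation could in principle increase the variance rate. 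I plan to resolve this by checking algebraically that strict inequality of these two rates is equivalent to $\loss(l') > 0$, so that whenever cancellation may occur one automatically has $\loss(l') = 0$ and the level contributes nothing to $\sup_{l'} \loss(l')$.

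Combining the three steps and performing a short algebraic rearrangement yields the claimed equality with $\loss$ given by~\eqref{eq:loss_in_th}. For the existence of a critical level $l_\ast \in \, ]\xi(x_0), \lmax[$, I would first verify that $\loss$ is upper semi-continuous on $[\xi(x_0), \lmax]$, using Lemma~\ref{lem:cont:UL} together with the standard lower semi-continuity of $(l, y) \mapsto U^{(l)}(x_0, y) + 2 U(y, \set{\xi = \lmax})$ inherited from the rate function. Then $\loss$ vanishes at the two endpoints: at $l' = \xi(x_0)$ the trial $y = x_0$ in the infimum of~\eqref{eq:loss_in_th} gives $\loss(\xi(x_0)) \leq 0$; at $l' = \lmax$ one uses $U(y, \set{\xi = \lmax}) = 0$ for $y \in \set{\xi = \lmax}$ combined with $\inf_{\set{\xi = \lmax}} U^{(\lmax)}(x_0, \cdot) = U(x_0, \set{\xi = \lmax})$. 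Compactness, upper semi-continuity and $\loss \geq 0$ together force any maximizer of $\loss$ to lie in the open interval $]\xi(x_0), \lmax[$.
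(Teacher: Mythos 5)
Your proposal is a genuinely different route from the paper's, and although the key algebraic insight is correct, it leaves two real gaps.

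What the paper actually does: starting from~\eqref{eq:var_in_th}, it performs the integration-by-parts identity
$\sigma^2_{\eps,\mrm{ams}} = p_\eps^2 \ln p_\eps + 2 \int \gamma^\eps_l(q_\eps^2)\,\dd(-p^\eps_l)$
(note $\gamma^\eps_l = p^\eps_l\,\eta^\eps_l$), which absorbs the squared-mean contribution of the variance into the boundary $p^2 \ln p$ term. This converts the \emph{variance} integrand into a \emph{second moment} under the unnormalized conditional measure, so no cancellation ever needs to be controlled. The integral is then treated with the extended Varadhan lemmas (\Cref{lem:var:lower,lem:var:upper}) applied to the $\eps$-dependent, merely-measurable path-space potential $V^T_{l,\eps}$, together with the semicontinuous-envelope Lemma~\ref{lem:V} and the hitting-set Lemma~\ref{lem:lim_hit}; the Stieltjes-Laplace step exploits the \emph{monotonicity} $l \mapsto \gamma_l(q^2_\lmax)$ (Lemma~\ref{lem:monotone}) to sandwich Riemann sums. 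Finally, the proof distinguishes $\sup\loss > 0$ from $\sup\loss = 0$ and uses the two-sided bounds $-p^2\ln p \leq \sigma^2 \leq 2\int\gamma_l(q^2)\dd(-p_l)$ to conclude in both cases.

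In contrast you work directly with $\Var_{\eta_l}(q_\lmax)$ and propose to (i) invoke an LDP for the conditional hitting law $\eta^\eps_l$ via the contraction principle and Gibbs conditioning, (ii) apply Varadhan's lemma to $\eta^\eps_l\b{(q^\eps_\lmax)^2}$, and (iii) resolve the variance/second-moment discrepancy by observing that the two rates agree iff $\loss(l)=0$. Point (iii) is a correct and nice observation — it is exactly the calculation that makes the paper's integration-by-parts trick work. But there are two genuine gaps. First, the case $\sup_l\loss(l)=0$ is not actually handled by your plan: you discard the $-\ln p^\eps_\lmax$ term at the outset, but when $\loss\equiv 0$ your bounds give only $\limsup_\eps\eps\log\mathcal{I}^\eps\leq 0$, and since $\Var_{\eta_l}$ could then decay exponentially, the \emph{lower} bound $\eps\log(\sigma^2/p^2)\to 0$ must come precisely from the discarded $-\ln p$ term (equivalently one must track that the rate of a sum of nonnegative quantities is the max of rates, including the $0$ contributed by $-\ln p$). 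Second, the asserted LDP for $\eta^\eps_l$ via "the contraction principle for the first-exit map" is not a routine step: the first-exit map is only semicontinuous (see Lemma~\ref{lem:lsc}), the conditional law has an $\eps$-dependent normalization $1/p^\eps_l$, and the cost function must be the constrained $U^{(l)}$ rather than $U$ — all of which the paper handles by avoiding any LDP for $\eta^\eps_l$ altogether and instead applying extended Varadhan lemmas to the non-normalized quantity with $\eps$-dependent potential. Relatedly, your Stieltjes Laplace principle needs a monotonicity or uniform-discretization argument that you do not supply; the paper gets it from the monotonicity of $\gamma_l(q^2_\lmax)$, which is not available for the variance integrand in your formulation. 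Your argument for the existence of a critical level in the open interval is in the same spirit as the paper's (endpoint vanishing plus upper semicontinuity) and is fine modulo the above.
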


The most important result associated with the above theorem is the following sufficient condition for \emph{weak asymptotic efficiency}, which is definition the vanishing of the logarithmic equivalent stated in Theorem~\ref{th:maintheo}.
\begin{The}\label{th:suff} For all $l\in [\xi(x_0),\lmax]$, the loss function~\eqref{eq:loss_in_th} is non-negative: $\loss(l) \geq 0$. If for some $l \in [\xi(x_0),\lmax]$, the initial minimal cost $U(x_0,\set{\xi = l})$ and the final minimal cost $U(\set{\xi=l},\set{\xi = \lmax})$ are attained by a same state $x_\ast(l) \in \set{\xi=l}$, that is $$U(x_0,x_\ast(l)) = U(x_0,\set{\xi = l})$$ and $$U(x_\ast(l),\set{\xi = \lmax})=U(\set{\xi=l},\set{\xi = \lmax}),$$ then the loss function vanishes $\loss(l) = 0$. \end{The}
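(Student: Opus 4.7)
The plan is to derive both statements from purely algebraic manipulations: the triangle inequality for $U$, the pointwise bound $U\leq U^{(l)}$, and the additivity~\eqref{eq:add} of the rate function. Throughout I write $B=\set{\xi=\lmax}$; by continuity of $\xi$ a trajectory reaching $\set{\xi\geq \lmax}$ must first cross $\set{\xi=\lmax}$, so $U(\cdot,\set{\xi\geq\lmax})=U(\cdot,\set{\xi=\lmax})$ and the two natural interpretations of $B$ in the statement agree.

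For the non-negativity $\loss(l)\geq 0$, I would fix $\eta>0$. By \Cref{ass:cont}, $U(x_0,B)<+\infty$, so one can pick $T>0$ and a trajectory $\pa{x}\in C([0,T],E\setminus \mathring A)$ from $x_0$ to a point of $B$ with $I_{[0,T]}[\pa{x}]\leq U(x_0,B)+\eta$. Since $\xi(x_0)\leq l\leq \lmax$ and $\xi$ is continuous, the first time $t_\eta$ at which $\xi(\pa{x}_{t})=l$ is well defined, and $y_\eta\eqdef \pa{x}_{t_\eta}\in\set{\xi=l}$. The restriction of $\pa{x}$ to $[0,t_\eta]$ lies in $\set{\xi\leq l}\setminus \mathring A$ and ends at $y_\eta$, so by additivity of $I$ one gets
\begin{equation*}
U(x_0,B)+\eta \;\geq\; I_{[0,t_\eta]}[\pa{x}]+I_{[t_\eta,T]}[\pa{x}] \;\geq\; U^{(l)}(x_0,y_\eta)+U(y_\eta,B).
\end{equation*}
Using $y=y_\eta$ as test point in the infimum defining $\loss(l)$ together with the trivial bound $U(x_0,\set{\xi=l})\leq U(x_0,y_\eta)\leq U^{(l)}(x_0,y_\eta)$,
\begin{equation*}
\inf_{\set{\xi=l}}\b{U^{(l)}(x_0,\cdot)+2U(\cdot,B)}+U(x_0,\set{\xi=l}) \;\leq\; 2\b{U^{(l)}(x_0,y_\eta)+U(y_\eta,B)} \;\leq\; 2U(x_0,B)+2\eta,
\end{equation*}
and letting $\eta\to 0$ yields $\loss(l)\geq 0$.

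For the vanishing under the common-minimizer hypothesis, it suffices to prove $\loss(l)\leq 0$. For any $y\in\set{\xi=l}$, combining $U\leq U^{(l)}$ with the triangle inequality $U(x_0,y)+U(y,B)\geq U(x_0,B)$ gives $U^{(l)}(x_0,y)+2U(y,B) \geq U(x_0,B)+U(y,B)$; taking the infimum,
\begin{equation*}
\inf_{\set{\xi=l}}\b{U^{(l)}(x_0,\cdot)+2U(\cdot,B)} \;\geq\; U(x_0,B)+U(\set{\xi=l},B).
\end{equation*}
The hypothesis on the common minimizer $x_\ast$, combined with the triangle inequality applied at $x_\ast$, gives $U(x_0,\set{\xi=l})+U(\set{\xi=l},B)=U(x_0,x_\ast)+U(x_\ast,B)\geq U(x_0,B)$. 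Adding this to the previous display yields $\inf+U(x_0,\set{\xi=l})\geq 2U(x_0,B)$, i.e. $\loss(l)\leq 0$; together with the first part, $\loss(l)=0$.

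The only mildly delicate point is the use of a near-optimal path rather than an exact minimizer in the first step, which is necessary since \Cref{ass:ldp,ass:cont,ass:boundA,ass:time} do not guarantee attainment of the infimum defining $U$; everything else is elementary once the first hitting point $y_\eta$ has been isolated. Conceptually, the role of the common-minimizer hypothesis is precisely to promote the general triangle-type bound $\inf\geq U(x_0,B)+U(\set{\xi=l},B)$ into the sharper $\inf+U(x_0,\set{\xi=l})\geq 2U(x_0,B)$ matching the upper bound produced in the first step.
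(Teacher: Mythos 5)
Your proof is correct. The first half is essentially the paper's argument, but fleshed out: the paper quotes the identity $U(x_0,\set{\xi=l})=U^{(l)}(x_0,\set{\xi=l})$ and asserts without proof the splitting identity $\inf_{\set{\xi=l}}\bigl[2U^{(l)}(x_0,\cdot)+2U(\cdot,B)\bigr]=2U(x_0,B)$, whereas your near-optimal-path argument (splitting at the first hitting time $t_\eta$ of $\set{\xi=l}$ and invoking additivity of the rate function) supplies, in one stroke, both the justification for that splitting identity and the bound $U(x_0,\set{\xi=l})\leq U^{(l)}(x_0,y_\eta)$ that replaces the cited lemma. For the vanishing under the common-minimizer hypothesis, the paper lower-bounds each term in the infimum pointwise using $x_\ast(l)$ (via $U^{(l)}(x_0,x)\geq U(x_0,x_\ast(l))$ and $U(x,\set{\xi=\lmax})\geq U(x_\ast(l),\set{\xi=\lmax})$) and concludes that the infimum is realized at $x_\ast(l)$; you instead derive a lower bound $\inf\geq U(x_0,B)+U(\set{\xi=l},B)$ that holds for \emph{any} $\xi$ from $U\leq U^{(l)}$ and the triangle inequality, and then invoke the common-minimizer hypothesis only through the single triangle inequality $U(x_0,\set{\xi=l})+U(\set{\xi=l},B)=U(x_0,x_\ast)+U(x_\ast,B)\geq U(x_0,B)$, adding the two. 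This is a slight reorganization rather than a different proof: it isolates exactly where the hypothesis enters, which is arguably a bit cleaner, while the paper's version is more direct. Both are correct, and your version is more self-contained since it does not lean on an unjustified displayed identity.
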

The above theorem gives a sufficient criterion ensuring that the AMS algorithm is weakly asymptotically efficient, in the sense that $\loss(l) = 0$ for all $l\in [\xi(x_0),\lmax]$. The interpretation and the geometric visualisation of the loss function and of conditions ensuring weak asymptotic efficiency will be discussed in Section~\ref{sec:insights}.In particular, it will be shown that a \emph{necessary} condition for weak asymptotic efficiency is that the minimum of the  
quasi-potential from the initial condition $U(x_0, \, . \,)$ co\"incide with a state of the optimal Freidlin-Wentzell path from $x_0$ to $B$ (the instanton).\medskip
\begin{proof}[Proof of Theorem~\ref{th:suff}] Since $U(x_0,\set{\xi =l})=U^{(l)}(x_0,\set{\xi =l})$, one has on $\set{\xi =l}$:
$$U^{(l)}(x_0, \, . \, ) \leq 2 U^{(l)}(x_0, \, . \, ) - U(x_0,\set{\xi =l}).$$
One can then remark that $$\inf_{\set{\xi = l}} \b{2U^{(l)}(x_0, \, . \, ) + 2 U( \, . \,,\set{\xi = \lmax}) } = 2 U(x_0,\set{\xi=\lmax}), $$
and combining the two last equations we get the positivity of the loss function. \medskip

Now, the existence of $x_\ast(l)$ ensures that for all $x \in \set{\xi = l}$, first i) $U^{(l)}(x_0,x) \geq U(x_0,x_\ast(l))$ and second ii) $U(x,\set{\xi=\lmax}) \geq U(x_\ast(l),\set{\xi=\lmax})$. This shows that the minimization in the definition of the loss functions is attained for $x = x_\ast(l)$, leading to $\loss(l)=0$.
\end{proof}
We also obtain a similar result for the fixed level variant. In what follows, we will use the abuse of notation
$$
U(x,\ell) = U(x,\set{\xi = \ell})
$$
which is clearer in that context to keep track of the different levels.
\begin{The}[Small-noise asymptotics of FMS fluctuations]\label{th:main_fms} Let $\ell_1, \ldots, \ell_J$ denotes a fixed sequence of levels with $\ell_J$. Let the variance of a FMS probability estimator $\p{\sigma^\eps_{\ell_J,\mrm{fms}}}^2$ be defined by~\eqref{eq:var_fms}. Under~\Cref{ass:ldp,,ass:cont,,ass:time,,ass:boundA}, the following holds true:
\[
 \lim_{\eps\to 0} \eps\log \b{ \p{\sigma^{\eps}_{\ell_J,\mathrm{SMC}}}^2/p_{\ell_J,\eps}^2} =  \max(C_1,C_2) \geq 0,
\]
where
\[
 C_1 \eqdef 2 U(x_0,\ell_J)  - \min_{1 \leq j \leq J-1 } \b{ \inf_{\set{\xi=\ell_j}} \p{ U^{(\ell_j)}(x_0,\,.\,) + 2 U(\,.\,,\ell_J) } + U(x_0,\ell_{j-1})},
\]
and
\[
 C_2 \eqdef \max_{1 \leq j \leq J} \b{ U(x_0,\ell_{j}) - U(x_0,\ell_{j-1})} \geq 0 .
\]
\end{The}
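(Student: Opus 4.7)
The plan is to compute the small-noise logarithmic asymptotics of each individual term in the FMS variance formula~\eqref{eq:var_fms} (divided by $(p_{\ell_J}^\eps)^2$), and then use the elementary fact that for a finite sum (with $\eps$-independent number of terms) of non-negative quantities, the logarithmic scale of the sum is the maximum of the individual logarithmic scales. After this reduction, essentially all the work is already contained in the LDP estimates developed for Theorem~\ref{th:maintheo}.

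More concretely, I would split
\[
 \p{\sigma^\eps_{\ell_J,\mrm{fms}}}^2/(p_{\ell_J}^\eps)^2 = \sum_{j=1}^{J-1} A_j^\eps + \sum_{j=1}^{J} B_j^\eps,
\]
with
\[
 A_j^\eps \eqdef \frac{p_{\ell_j}^\eps}{p_{\ell_{j-1}}^\eps}\p{(p_{\ell_{j-1}}^\eps)^2 - (p_{\ell_j}^\eps)^2}\mrm{Var}_{\eta_{\ell_j}^\eps}(q_{\ell_J}^\eps)/(p_{\ell_J}^\eps)^2,\quad B_j^\eps \eqdef p_{\ell_{j-1}}^\eps/p_{\ell_j}^\eps - 1 .
\]
By Lemma~\ref{lem:unif_proba}, $\lim_{\eps\to 0}\eps\log p_{\ell_j}^\eps = -U(x_0,\ell_j)$, and since the quasi-potential is non-decreasing along the ladder of levels, $\lim_{\eps\to 0}\eps\log B_j^\eps = U(x_0,\ell_j) - U(x_0,\ell_{j-1}) \geq 0$, so that $\max_j \lim \eps\log B_j^\eps = C_2 \geq 0$.

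The only non-elementary ingredient in $A_j^\eps$ is $\mrm{Var}_{\eta_{\ell_j}^\eps}(q_{\ell_J}^\eps)$. By the strong Markov property at $\tau_{\ell_j}$, one has $\eta_{\ell_j}^\eps(q_{\ell_J}^\eps) = p_{\ell_J}^\eps/p_{\ell_j}^\eps$, so the squared first moment has log-scale $2U(x_0,\ell_j) - 2U(x_0,\ell_J)$. The second moment is handled by writing
\[
\eta_{\ell_j}^\eps\p{(q_{\ell_J}^\eps)^2} = \frac{1}{p_{\ell_j}^\eps} \E_{x_0}\b{\p{q_{\ell_J}^\eps(\pa{X}^\eps_{\tau_{\ell_j}})}^2 \one_{\tau_{\ell_j}<\tau_A}}
\]
and applying a Varadhan--Laplace-type lemma to the hitting measure restricted to trajectories staying in $\set{\xi \leq \ell_j} \setminus A$ up to $\tau_{\ell_j}$, using the uniform LDP upper/lower bounds on $x\mapsto q_{\ell_J}^\eps(x)$ from Lemma~\ref{lem:unif_proba}, continuity of $y\mapsto U(y,\ell_J)$ via Lemma~\ref{lem:cont:UL}, and additivity of the rate function. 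This yields
\[
 \lim_{\eps\to 0}\eps\log \eta_{\ell_j}^\eps((q_{\ell_J}^\eps)^2) = U(x_0,\ell_j) - \inf_{\set{\xi=\ell_j}}\b{U^{(\ell_j)}(x_0,\cdot) + 2U(\cdot,\ell_J)},
\]
the constrained quasi-potential $U^{(\ell_j)}$ arising precisely because the event $\set{\tau_{\ell_j} < \tau_A}$ forces the path to stay in $\set{\xi\leq\ell_j}\setminus A$. A short triangle-inequality check shows this second-moment exponent dominates the squared-first-moment one, so $\mrm{Var}_{\eta_{\ell_j}^\eps}(q_{\ell_J}^\eps)$ has the same log-scale.

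Assembling the four exponential contributions in $A_j^\eps$ one finds after simplification
\[
\lim_{\eps\to 0}\eps \log A_j^\eps = 2U(x_0,\ell_J) - U(x_0,\ell_{j-1}) - \inf_{\set{\xi=\ell_j}}\b{U^{(\ell_j)}(x_0,\cdot) + 2U(\cdot,\ell_J)} .
\]
Maximizing over $j \in \set{1,\ldots,J-1}$ gives exactly $C_1$, and combining with the $B$-contribution via the log-of-sum principle yields the announced $\max(C_1,C_2)\geq 0$. The main obstacle is the Varadhan-type step for the second moment: one must integrate the exponentially vanishing quantity $(q_{\ell_J}^\eps)^2$ against the non-normalized hitting law at $\tau_{\ell_j}$, uniformly in the hit point $y\in\set{\xi=\ell_j}$. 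This uniformity is essential because the integrand itself is level-dependent at the LDP scale, but it is already provided by Assumption~\ref{ass:ldp} combined with the strong Markov property and additivity of $I_{[0,T]}$, exactly the toolbox assembled for Theorem~\ref{th:maintheo}; the arguments should therefore transfer with essentially cosmetic adjustments.
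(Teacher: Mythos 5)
The paper does not actually supply a proof of Theorem~\ref{th:main_fms}: Section~\ref{sec:proofs} is devoted entirely to the AMS case of Theorem~\ref{th:maintheo}, and the FMS statement is left implicit. Your approach—decompose the normalized variance~\eqref{eq:var_fms} into the two non-negative sums $\sum_j A_j^\eps + \sum_j B_j^\eps$, use Lemma~\ref{lem:unif_proba} for the $p_{\ell_j}^\eps$ and $q_{\ell_J}^\eps$ exponents, use the Varadhan-type Lemmas~\ref{lem:var:lower}--\ref{lem:var:upper} (via Lemmas~\ref{lem:up}--\ref{lem:down}) for $\gamma_{\ell_j}^\eps((q_{\ell_J}^\eps)^2)$, and combine with the log-of-finite-sum rule—is precisely the route the paper's AMS proof suggests, and the bookkeeping leading to $\lim_\eps \eps \log A_j^\eps$ and hence to $\max(C_1,C_2)$ is correct.

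Two places need more care than you give them, both inherited from subtleties the AMS proof handles explicitly. First, $\mrm{Var}_{\eta_{\ell_j}^\eps}(q_{\ell_J}^\eps)$ is a difference $m_2-m_1^2$ of positive quantities; your triangle-inequality check only shows the second-moment exponent is $\geq$ the squared-first-moment exponent, and when they coincide the variance could a priori be exponentially smaller than $m_2$ through cancellation. The AMS proof (proof of Theorem~\ref{th:maintheo}) avoids this by a case split on $\sup_l\loss(l)=0$ versus $>0$; here one should note that $C_1=0$ forces $U(x_0,\ell_j)=U(x_0,\ell_{j-1})$ for all $j$ (compare $\inf_{\set{\xi=\ell_j}}(\cdot)+U(x_0,\ell_{j-1}) \geq 2U(x_0,\ell_J) \geq \inf_{\set{\xi=\ell_j}}(\cdot)+U(x_0,\ell_j)$), hence $U(x_0,\ell_J)=0$ and $C_2=0$; in that degenerate, non-rare regime everything is sub-exponential and the cancellation is harmless. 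Second, Lemmas~\ref{lem:up}--\ref{lem:down} only pinch $-\eps\log\gamma_{\ell_j}^\eps((q_{\ell_J}^\eps)^2)$ between $\ul{w}(\ell_j)$ and $\ol{w}(\ell_j)$, which by Lemma~\ref{lem:updown} are the right- and left-continuous envelopes of the same decreasing function, not necessarily equal at a \emph{fixed} level $\ell_j$. The AMS proof dissolves this by taking the supremum over a continuum of levels; at fixed $\ell_j$ the limit you assert for the second moment need not exist if $\ul{w}(\ell_j)<\ol{w}(\ell_j)$. This gap is latent in the theorem statement itself (the quantity $\inf_{\set{\xi=\ell_j}}\p{U^{(\ell_j)}(x_0,\cdot)+2U(\cdot,\ell_J)}$ sits between the two envelopes), so your proof does not introduce it, but an honest proof should flag either a continuity hypothesis on the chosen levels or state the result with $\liminf/\limsup$.
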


It is worth noticing the following:
\begin{Lem}
 The quantity $C_1$ in Theorem~\ref{th:main_fms} satisfies:
 \begin{align*}
 & 0 \leq \min_{1 \leq j \leq J-1} U(x_0,\ell_{j}) - U(x_0,\ell_{j-1}) \\
 &\leq C_1 -\min_{1 \leq j \leq J-1 } \loss(\ell_j) \leq  \max_{1 \leq j \leq J-1} U(x_0,\ell_{j}) - U(x_0,\ell_{j-1}) %& %\leq 2 U(x_0,\ell_J)  - \min_{1 \leq j \leq J-1 } \b{ \inf_{\set{\xi=\ell_j}} \p{ U^{(\ell_j)}(x_0,\,.\,) + 2 U(\,.\,,\ell_J) } + U(x_0,\ell_{j}) } - C_2. \\
 %\leq \min_{1 \leq j \leq J-1 } \loss(\ell_j) + \max_{1 \leq j \leq J-1} U(x_0,\ell_{j}) - U(x_0,\ell_{j-1}) ,
\end{align*}
\end{Lem}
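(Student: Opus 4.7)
The plan is to collapse the definition of $C_1$ into a clean algebraic expression. Introduce the shorthand $\Delta_j \eqdef U(x_0,\ell_j) - U(x_0,\ell_{j-1})$ and $A_j \eqdef \inf_{\set{\xi = \ell_j}}\bigl[U^{(\ell_j)}(x_0,\cdot) + 2U(\cdot,\ell_J)\bigr]$. The defining equality for the loss function (as it appears in Theorem~\ref{th:maintheo}) reads $A_j = 2U(x_0,\ell_J) - \loss(\ell_j) - U(x_0,\ell_j)$, hence
\[ A_j + U(x_0,\ell_{j-1}) = 2U(x_0,\ell_J) - \loss(\ell_j) - \Delta_j. \]
Substituting into the definition of $C_1$ and using $-\min(\cdot) = \max(-\cdot)$ produces the key identity
\[ C_1 = \max_{1 \leq j \leq J-1} \bigl[\loss(\ell_j) + \Delta_j\bigr], \]
reducing the lemma to purely combinatorial bounds on the finite sequence $\bigl(\loss(\ell_j),\Delta_j\bigr)_{1 \leq j \leq J-1}$.

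The first inequality $\min_j \Delta_j \geq 0$ follows from the monotonicity of $l \mapsto U(x_0,\set{\xi = l})$: since $\xi$ is continuous and $\ell_j > \ell_{j-1}$, any continuous path from $x_0$ reaching $\set{\xi = \ell_j}$ must first cross $\set{\xi = \ell_{j-1}}$, so by concatenation of the rate function $U(x_0,\ell_j) \geq U(x_0,\ell_{j-1})$. Let $j^\ast$ be any index at which $\loss$ attains its minimum over $\{1,\dots,J-1\}$. Evaluating the key identity at $j = j^\ast$ gives $C_1 \geq \loss(\ell_{j^\ast}) + \Delta_{j^\ast} = \min_j \loss(\ell_j) + \Delta_{j^\ast}$, hence $C_1 - \min_j \loss(\ell_j) \geq \Delta_{j^\ast} \geq \min_j \Delta_j$, which is the lower bound in the chain.

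The third inequality $C_1 - \min_j \loss(\ell_j) \leq \max_j \Delta_j$ is the delicate step. Picking any index $j_1$ realising the maximum in the identity for $C_1$, it amounts to showing
\[ \loss(\ell_{j_1}) - \min_j \loss(\ell_j) + \Delta_{j_1} \leq \max_j \Delta_j, \]
which is \emph{not} available from the identity and the mere non-negativity of $\loss$: the main obstacle is to control the excess $\loss(\ell_{j_1}) - \loss(\ell_{j^\ast})$ by a gap of the form $\max_j \Delta_j - \Delta_{j_1}$. The plan would be to exploit the geometric interpretation of $\loss$ in the Freidlin-Wentzell picture developed in Section~\ref{sec:loss} --- an excess of loss at some intermediate level forces the optimal instanton trajectory from $x_0$ to $\ell_J$ to accumulate a compensating cost gap on a neighbouring level, so that any peak of $\loss$ on the ladder $(\ell_j)_j$ is offset by a correspondingly large jump of $U(x_0,\ell_{\,\cdot\,})$ nearby. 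This balancing via the instanton geometry is what I expect to be the crux of the proof and would carry out in detail by a case analysis on the position of $j_1$ relative to the minimiser $j^\ast$ within the ladder.
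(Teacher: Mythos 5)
Your key identity $C_1 = \max_{1 \leq j \leq J-1}\bigl[\loss(\ell_j) + \Delta_j\bigr]$, with $\Delta_j \eqdef U(x_0,\ell_j)-U(x_0,\ell_{j-1})$, is correct, and your proofs of $0 \leq \min_j\Delta_j$ and of the middle inequality are fine. But the obstruction you ran into on the third inequality is genuine and cannot be repaired by instanton geometry: read literally, $C_1 - \min_j\loss(\ell_j) \leq \max_j\Delta_j$ is false as a combinatorial fact about the sequences $(\loss(\ell_j))_j$ and $(\Delta_j)_j$. For example, $\loss(\ell_1)=0$, $\loss(\ell_2)=M$ large, and $\Delta_1=\Delta_2=1$ gives $C_1 - \min_j\loss(\ell_j) = M+1 \gg 1 = \max_j\Delta_j$, and nothing in the quasi-potential structure couples interior peaks of $\loss$ to the ladder gaps $\Delta_j$ in the way your proposed fix would require.

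The resolution is that the printed statement has a sign slip: $\min_{1\leq j\leq J-1}\loss(\ell_j)$ should read $\max_{1\leq j\leq J-1}\loss(\ell_j)$. The paper's own two-line proof bounds
\[
\min_{1\leq j\leq J-1}\b{A_j + U(x_0,\ell_j)} - \min_{1\leq j\leq J-1}\b{A_j + U(x_0,\ell_{j-1})}
\]
(in your notation $A_j \eqdef \inf_{\set{\xi=\ell_j}}\b{U^{(\ell_j)}(x_0,\cdot)+2U(\cdot,\ell_J)}$) between $\min_j\Delta_j$ and $\max_j\Delta_j$ by shifting $U(x_0,\ell_{j-1})$ to $U(x_0,\ell_j)$ inside the first $\min$; since $\loss(\ell_j) = 2U(x_0,\ell_J) - A_j - U(x_0,\ell_j)$ and $C_1 = 2U(x_0,\ell_J) - \min_j\b{A_j+U(x_0,\ell_{j-1})}$, this displayed quantity is precisely $C_1 - \max_j\loss(\ell_j)$, not $C_1 - \min_j\loss(\ell_j)$. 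Once this is corrected, your identity makes both remaining inequalities immediate and equivalent to the paper's argument: picking a maximiser $j^\ast$ of $\loss$, $C_1 \geq \loss(\ell_{j^\ast})+\Delta_{j^\ast}$, so $C_1-\max_j\loss(\ell_j) \geq \Delta_{j^\ast} \geq \min_j\Delta_j$; picking a maximiser $j_1$ of $\loss(\ell_j)+\Delta_j$, $\max_j\loss(\ell_j)\geq\loss(\ell_{j_1})$, so $C_1 - \max_j\loss(\ell_j) \leq \Delta_{j_1} \leq \max_j\Delta_j$. No geometric input is needed, and your identity is arguably a cleaner packaging of the paper's elementary manipulation.
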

\begin{proof} The lower bound comes from
 \begin{align*}
  &\min_{1 \leq j \leq J-1 } \b{ \inf_{\set{\xi=\ell_j}}\p{ U^{(\ell_j)}(x_0,\,.\,) + 2 U(\,.\,,\ell_J) } + U(x_0,\ell_{j-1}) } +\min_{1 \leq j \leq J-1 } \b{ U(x_0,\ell_{j}) -  U(x_0,\ell_{j-1})} \\
  & \qquad \leq \min_{1 \leq j \leq J-1 } \b{ \inf_{\set{\xi=\ell_j} } \p{ U^{(\ell_j)}(x_0,\,.\,) + 2 U(\,.\,,\ell_J) } + U(x_0,\ell_{j}) } 
  %\\
  %& \qquad \leq \min_{1 \leq j \leq J-1 } \b{ \inf_{\set{\xi=\ell_j} } \p{ 2 U^{(\ell_j)}(x_0,\,.\,) + 2 U(\,.\,,\ell_J)} } = 2 U(x_0,\ell_J)
 \end{align*}
 while the upper bound comes from
 \begin{align*}
  &\min_{1 \leq j \leq J-1 } \b{ \inf_{\set{\xi=\ell_j}}\p{ U^{(\ell_j)}(x_0,\,.\,) + 2 U(\,.\,,\ell_J) } + U(x_0,\ell_{j-1}) } \\
  & \qquad \geq \min_{1 \leq j \leq J-1 } \b{ \inf_{\set{\xi=\ell_j} } \p{ U^{(\ell_j)}(x_0,\,.\,) + 2 U(\,.\,,\ell_J) } + U(x_0,\ell_{j}) } -  \max_{1 \leq j \leq J-1 } \b{ U(x_0,\ell_{j}) -  U(x_0,\ell_{j-1})} \\
  %& \qquad \leq \min_{1 \leq j \leq J-1 } \b{ \inf_{\set{\xi=\ell_j} } \p{ 2 U^{(\ell_j)}(x_0,\,.\,) + 2 U(\,.\,,\ell_J)} } = 2 U(x_0,\ell_J)
 \end{align*}
\end{proof}
% necessary
% \begin{align*}
%  C_1 %& %\leq 2 U(x_0,\ell_J)  - \min_{1 \leq j \leq J-1 } \b{ \inf_{\set{\xi=\ell_j}} \p{ U^{(\ell_j)}(x_0,\,.\,) + 2 U(\,.\,,\ell_J) } + U(x_0,\ell_{j}) } - C_2. \\
%  & \geq \min_{1 \leq j \leq J-1 } \loss(\ell_J) + \max_{1 \leq j \leq J-1} U(x_0,\ell_{j}) - U(x_0,\ell_{j-1}) ,
% \end{align*}
% where 
Note that in the above lemma the equality case $C_1 = \min_{1 \leq j \leq J-1 } \loss(\ell_j)$ is satisfied at least if the difference of initial cost between two levels is constant, that is $ U(x_0,\ell_{j}) - U(x_0,\ell_{j-1})$ is independent of $j$ for $j=1 \ldots J$, and thus equal $C_2$. \medskip

% Since there are geometric cases for which $\loss \equiv 0$, we obtain the sharp inequality
% $$
% C_1 %& %\leq 2 U(x_0,\ell_J)  - \min_{1 \leq j \leq J-1 } \b{ \inf_{\set{\xi=\ell_j}} \p{ U^{(\ell_j)}(x_0,\,.\,) + 2 U(\,.\,,\ell_J) } + U(x_0,\ell_{j}) } - C_2. \\
%  & \leq \min_{1 \leq j \leq J-1 } \loss(\ell_J) + \max_{1 \leq j \leq J-1} U(x_0,\ell_{j}) - U(x_0,\ell_{j-1}) 
% $$
The previous lemma thus shows that the surplus of loss in the FMS case as compared to the AMS case exactly comes from the differences $U(x_0,\ell_{j}) - U(x_0,\ell_{j-1})$. The AMS and the FMS small noise asymptotic variance will be similar only if the latter are small as compared to the loss function. This requires to choose sufficiently many levels $\ell_j$ in the FMS algorithms. 
%is negligeable as compared  Using large deviations estimates, the difference of initial cost between levels can be recast as the limit at large deviations scales of the probability to reach $\ell_j$ conditional on having reached $\ell_{j-1}$ (before $A$):
%$$
%U(x_0,\ell_{j}) - U(x_0,\ell_{j-1}) = - \lim_{\eps \to 0} \eps \ln \P_{x_0}\b{\tau_{\ell_{j}} < \tau_A \mid \tau_{\ell_{j-1}} < \tau_A}. 
%$$
%  2 U(x_0,1) -   \\
% & \qquad  - \min_{1 \leq j \leq J-1 } \b{ \inf_{{\set{\xi=\ell_j} }\p{ U(x_0,x) + 2 U(x,1) } + U(x_0,\ell_{j-1})} \quad (=A\geq B \geq 0) \\
% & \qquad + \max_{1 \leq j \leq J} \b{ U(x_0,\ell_{j}) - U(x_0,\ell_{j-1})}\quad (=B\geq 0).
%  \end{align*}

%\mathias{Re-re-verifier les formules ci-dessus, entre les $\ell_j$ et $\ell_{j-1}$.}

% \begin{Rem}
%  It also is interesting to remark that one has the sharp inequality:
%  \[
%   C_1 \geq \min_{1 \leq j \leq J-1} \b{ U(x_0,\ell_{j}) - U(x_0,\ell_{j-1})} \geq 0.
%  \]
%  This inequality comes from
%  \begin{align*}
%   &\min_{1 \leq j \leq J-1 } \b{ \inf_{\set{\xi=\ell_j}}\p{ U^{(\ell_j)}(x_0,\,.\,) + 2 U(\,.\,,\ell_J) } + U(x_0,\ell_{j-1}) } +\min_{1 \leq j \leq J-1 } \b{ U(x_0,\ell_{j}) -  U(x_0,\ell_{j-1})} \\
%   & \qquad \leq \min_{1 \leq j \leq J-1 } \b{ \inf_{\set{\xi=\ell_j} } \p{ U^{(\ell_j)}(x_0,\,.\,) + 2 U(\,.\,,\ell_J) } + U(x_0,\ell_{j}) } \\
%   & \qquad \leq \min_{1 \leq j \leq J-1 } \b{ \inf_{\set{\xi=\ell_j} } \p{ 2 U^{(\ell_j)}(x_0,\,.\,) + 2 U(\,.\,,\ell_J)} } = 2 U(x_0,\ell_J)
%  \end{align*}
% \end{Rem}

As a corollary we obtain equality between the small noise asymptotic variance of the adaptive AMS and fixed level FMS algorithms, when the number of levels $J$ tends to infinity. This shows that under the different assumptions used in this work (\Cref{ass:ldp,,ass:cont,,ass:time,,ass:boundA} and the assumptions of Theorem~\ref{th:CLT}), one can commute the $J \to + \infty$ and the $\eps \to 0$ limit.
\begin{Cor} Assume $\ell_J = \lmax$ is fixed and $J \to +\infty$ with $\max_j(\ell_j - \ell_{j-1}) \to 0$. Then under \Cref{ass:ldp,,ass:cont,,ass:time,,ass:boundA}
 $$
 \lim_{J \to + \infty} \lim_{\eps\to0}-\eps\log \p{ (\sigma^\eps_{\lmax,\mrm{FMS}} )^2}  = \lim_{\eps\to 0}-\eps\log\p{ (\sigma^\eps_{\lmax,\mrm{AMS}})^2}.
 $$
 %\fred{ on peut remarquer que les deux limites commuttent}
\end{Cor}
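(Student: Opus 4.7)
The plan is to combine Theorems~\ref{th:maintheo} and~\ref{th:main_fms} and then pass to the limit $J \to +\infty$ in the resulting logarithmic equivalent of the FMS variance. Using Lemma~\ref{lem:unif_proba} to convert $\eps\log p_{\lmax}^\eps \to -U(x_0, \set{\xi=\lmax})$, so that the multiplicative factor $p_{\lmax}^2$ contributes identically to both sides, the statement reduces to establishing
\[
 \lim_{J \to +\infty} \max(C_1, C_2) \;=\; \sup_{l \in [\xi(x_0),\lmax]} \loss(l)
\]
along any admissible sequence of partitions with $\max_j(\ell_j - \ell_{j-1}) \to 0$.

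The first step I would carry out is to control $C_2$: by Lemma~\ref{lem:cont:UL} the function $l \mapsto U(x_0, \set{\xi \geq l})$ is continuous on the compact interval $[\lmin, \lmax]$, hence uniformly continuous, so $C_2 = \max_j (U(x_0, \ell_j) - U(x_0, \ell_{j-1})) \to 0$ as the mesh vanishes. The second step is to show $C_1 = \max_j \loss(\ell_j) + o(1)$. This follows from the lemma stated immediately before the corollary: inspecting its proof, one actually obtains the sharper sandwich $\min_j \Delta_j \leq C_1 - \max_j \loss(\ell_j) \leq \max_j \Delta_j$ with $\Delta_j \eqdef U(x_0, \ell_j) - U(x_0, \ell_{j-1}) \geq 0$, so $C_1 - \max_j \loss(\ell_j)$ is squeezed to $0$. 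Since $\loss \geq 0$ by Theorem~\ref{th:suff}, combining the two gives $\max(C_1, C_2) = \max_j \loss(\ell_j) + o(1)$.

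The remaining step is to show $\max_j \loss(\ell_j) \to \sup_l \loss(l)$ as the mesh vanishes, which is where I expect the main difficulty. The upper bound $\max_j \loss(\ell_j) \leq \sup_l \loss(l)$ is immediate. For the converse, Theorem~\ref{th:maintheo} guarantees a critical level $l_\ast \in \, ]\xi(x_0), \lmax[$ with $\loss(l_\ast) = \sup_l \loss(l)$, and one selects partition points $\ell_{j(J)} \to l_\ast$; the task reduces to establishing $\loss(\ell_{j(J)}) \to \loss(l_\ast)$. Here the obstacle is that $\loss$ is a priori only upper semi-continuous, while the argument demands lower semi-continuity at $l_\ast$. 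The missing direction would come from the continuity at $l_\ast$ of the map $l \mapsto \inf_{\set{\xi = l}}\b{U^{(l)}(x_0, \, . \,) + 2 U(\, . \,,\set{\xi = \lmax})}$. One proves this by gluing any near-optimal trajectory for the infimum at level $l_\ast$ to a short segment crossing $\set{\xi \in [l_\ast - \delta, l_\ast + \delta]}$, whose cost is $o(1)$ as $\delta \to 0$ by Assumption~\ref{ass:cont}; this is exactly the same construction already used in the proof of Lemma~\ref{lem:cont:UL}, applied here not to the level-set cost $U(x_0, \set{\xi \geq l})$ alone, but to the joint infimum appearing in the definition of $\loss$.
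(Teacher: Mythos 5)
Your steps for $C_2$ and for the sandwich $\min_j \Delta_j \leq C_1 - \max_j \loss(\ell_j) \leq \max_j \Delta_j$ (where $\Delta_j \eqdef U(x_0,\ell_j)-U(x_0,\ell_{j-1})$) are correct, and you rightly spot that the lemma preceding the corollary should read $\max_j\loss(\ell_j)$ rather than $\min_j\loss(\ell_j)$; your algebra confirming this is sound. The issue lies in your step~3 and the fix you propose for it.

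You reduce the problem to showing $\loss(\ell_{j(J)})\to\loss(l_\ast)$ along a sequence $\ell_{j(J)}\to l_\ast$ and propose to prove \emph{full} continuity of $\tilde{w}(l)\eqdef\inf_{\set{\xi=l}}\b{U^{(l)}(x_0,\,.\,)+2U(\,.\,,B)}$ at $l_\ast$ by a gluing argument modelled on Lemma~\ref{lem:cont:UL}. This is a strictly stronger statement than the paper makes available (Lemma~\ref{lem:updown} explicitly leaves room for $\ul{w}\neq\ol{w}$ at countably many jump points, i.e.\ discontinuities of $\tilde{w}$), and the gluing argument itself does not go through as proposed: if $\pa{x}'$ is a near-optimal trajectory for $\tilde{w}(l')$ with $l'>l_\ast$, truncating it at $\tau_{l_\ast}$ yields only $\tilde{w}(l_\ast)\leq\tilde{w}(l')+I_{[\tau_{l_\ast},\tau_{l'}]}[\pa{x}']+\eta$, and the correction term $I_{[\tau_{l_\ast},\tau_{l'}]}[\pa{x}']$ has no a priori reason to vanish as $l'\searrow l_\ast$ because $\pa{x}'$ itself changes with $l'$; Assumption~\ref{ass:cont} gives cheap \emph{extensions upward} from a fixed point, not small-cost level crossings of arbitrary near-optimal trajectories. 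Left-continuity is even more problematic and is not needed here.

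The correct (and much shorter) route uses the machinery the paper already supplies. From the last lemma before the variance analysis, $2u(\lmax)-u(l)-\ol{w}(l)\leq\loss(l)\leq 2u(\lmax)-u(l)-\ul{w}(l)$, with $u$ continuous and $\ul{w}(l)=\lim_{l'\searrow l}\ol{w}(l')$ by Lemma~\ref{lem:updown}. At the maximizer $l_\ast\in\,]\xi(x_0),\lmax[$ this forces right-continuity of $\loss$: indeed $\loss(l_\ast)=M\eqdef\sup_l\loss(l)\leq 2u(\lmax)-u(l_\ast)-\ul{w}(l_\ast)$, while for $l\searrow l_\ast$ the lower bound gives $\liminf_{l\searrow l_\ast}\loss(l)\geq 2u(\lmax)-u(l_\ast)-\ul{w}(l_\ast)\geq M$, and $\loss\leq M$ always, so $\lim_{l\searrow l_\ast}\loss(l)=M$. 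Hence for any $\eta>0$ there is a non-degenerate interval $(l_\ast,l_\ast+\delta)\subset\,]\xi(x_0),\lmax[$ on which $\loss>M-\eta$; once the mesh is fine enough, some $\ell_j$ lands there, yielding $\max_j\loss(\ell_j)\geq M-\eta$. This is precisely the same one-sided continuity argument that closes the proof of Theorem~\ref{th:maintheo}, so no new continuity result (and in particular no new gluing construction) is required.
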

%\fred{manque peut-etre le regime $J_\eps \simeq C/\eps \simeq |og p_\eps$ ?}

%\todo{Ajouter énoncés sur la loss fonction}.

\section{Interpretation and insights}\label{sec:insights}
This section is dedicated to the interpretation of main result of this work, namely the small-noise large-sample-size variance formulas~\eqref{eq:p_th} to \eqref{eq:loss} for the AMS algorithm presented in the introduction, and then stated rigorously in Theorem~\ref{th:maintheo} and Theorem~\ref{th:suff}.\medskip

Throughout this section we will use the notation $B=\set{\xi \geq \lmax}$.

\subsection{The variance formula~\ref{eq:p_th}}\label{sec:related_var}

A result already mentioned in the introduction (formula~\eqref{eq:p_th}) says that the relative variance of the AMS estimator of the probability of interest $p^\eps_{\lmax}$ is equivalent -- at large deviations regime and for the worst possible level $l=l_\ast \in ] \xi(x_0), \lmax [$ -- to the variance of the following unbiased (theoretical) estimator of $p^\eps_{\lmax}$: 
$$\Var \b{ p_{l}^\eps \, q^\eps_{\lmax} \p{X^\eps_{l}}} %= p_{l}^\eps \times p_{l}^\eps \mrm{var}_{\eta_{l}^\eps}(q^\eps_{\lmax}) 
,$$
where $X^\eps_{l} \sim \eta_{l}^\eps$ is distributed according to $\eta_{l}^\eps$, the distribution of the first hitting place of $\set{\xi =l}$ by a trajectory (conditioned to happen before reaching $A$).\medskip

The large deviations estimates obtained in this paper suggests a decomposition of the above variance into the product of $p_{l}^\eps$ on the one hand, and $\gamma_{l}^\eps \p{ (q^\eps_{\lmax})^2 }$ on the other hand -- where $\gamma^\eps_l = p^\eps_l \times \eta^\eps_l$ is the non-normalized version of the conditional distribution $\eta^\eps_l$. This will be discussed in Section~\ref{sec:loss}. We will rather now comment, quite informally, the behavior of the conditional distribution $\eta_{l}^\eps$, of the probability $p^\eps_l$ and their relations to the variance formula above.\medskip

The conditional distribution $\eta_{l}^\eps$ is by definition concentrated in areas of $\set{\xi=l}$ that are the most likely to be reached by trajectories (before $A$). Unfortunately, the importance function $\xi$ usually {\em misleads} (so to speak) trajectories, in the sense that those likely areas of $\set{\xi=l}$ may have a very small remaining probability to hit $B$. In that scenario, we can informally decompose trajectories into two types of events: i) those {\em typical but unuseful trajectories} that hit $\set{\xi=l}$ in the most likely areas but have a very small remaining probability $q^\eps_{\lmax}$ to reach the final level $B$ (before $A$), and ii) rare {\em lucky trajectories} that are outliers with a relatively large remaining probability $q^\eps_{\lmax}$ to reach the final level $B$ (before $A$). \medskip

% 
% 
% two terms $p^\eps_l$ and $\mrm{var}_{\eta_{l}^\eps}(q^\eps_{\lmax})$ and their relation to the AMS algorithm. \medskip 

By definition, the 'typical but unuseful trajectories' of $\eta_{l}^\eps$ are involved in \emph{underestimation} of the final probability
$$
\eta_l^\eps( q^\eps_{\lmax}  \mid \mrm{typical}) \ll \frac{p^\eps_{\lmax}}{p^\eps_l} ;
$$
while the 'lucky trajectories' ii) are involved in \emph{overestimation}
$$
\eta_l^\eps( q^\eps_{\lmax}  \mid \mrm{lucky}) \gg \frac{p^\eps_{\lmax}}{p^\eps_l} .
$$
Note that in an AMS algorithm, clones sampling 'typical but unuseful trajectories' will have little or no offspring, while clones sampling 'lucky' will have many offsprings and will chiefly contribute to the final estimation. \medskip

In that scenario, the quantity $\mrm{var}_{\eta_{l}^\eps}(q^\eps_{\lmax})$ which quantifies the fluctuations of the function $q^\eps_{\lmax}$ is dominated by 'lucky trajectories' since a large remaining probability will \emph{mainly} contribute to variance through the average square $\eta_{l}^\eps( (q^\eps_{\lmax} )^2)$. \medskip

On the other hand, the total mass of 'typical but unuseful trajectories' is related to $p^\eps_l$: indeed, the larger $p^\eps_l$ is, the easier it is to reach $\set{\xi =l}$, and the more 'typical but unuseful trajectories' will happen. This idea will be made rigorous using the 'underestimation' part of the loss function in Section~\ref{sec:loss} below. \medskip

This informally described phenomenon is somehow similar to what happens with a naive i.i.d. Monte-Carlo. Let $p$ denote the small target probability to be estimated. Assume one estimates $\eps \ll p$ (with probability $1-p_\eps$), or success $1$ (very rarely, with probability $p_\eps$); where $p_\eps = (p-\eps)/(1-\eps) \simeq p$. The overall relative variance is only driven by the rare but highly overestimating value $1$. \medskip 

It should also be noted that in those considerations, the precise value of the underestimation does not influence variance.  To fix ideas in the above simple i.i.d. example, $\eps$ does \emph{not} impact the order of the variance $\simeq p(1-p)$.\medskip

\subsection{A disclaimer about variance}

 As said before, it is well-known that an unbiased estimation of a rare event probability usually leads to a typical systematic \emph{underestimation} (an 'apparent bias'), as well as to a variance driven by rare \emph{overestimations} (where some erroneously large values of the estimator contribute strongly to the variance while being nonetheless rarely seen by the algorithm). This has been noted for instance in \cite{chatterjee2018sample,rolland2015statistical,guyader2020efficient}. %(cite Guyader Touchette Diaconis, Jorand). 
 This can be seen as a limitations of the present analysis for in some practical cases, if for instance one is only interested in understanding the \emph{ typical underestimation of an AMS algorithm} (as said in the end of the last section this information is lost in our variance analysis) when using the algorithm with limited number of particles. \medskip 
 
 However, our analysis is restricted to a regime where $N$ can be taken to infinity \emph{before} $\eps \to 0$. In that perspective, there are sufficiently many clones to obtain a Central Limit Theorem \cite{cdgr3}, and the variance is a legitimate quantity to discuss the fluctuations of the algorithm. This situation happens in practice when the importance function is sufficiently good to enable a consistent sampling of paths close enough to rate-funtion-optimal trajectories.\medskip 

\subsection{The loss function: discussion}\label{sec:loss}
The loss function can be interpreted using a decomposition into an \emph{underestimation} part and a \emph{overestimation} part, in the spirit of the discussion of Section~\ref{sec:related_var}. The underestimation part, denoted $\loss_{\mrm{U}}$, is associated with the large deviation equivalent of the factor $p^\eps_{l\ast}$ which satisfies
$$
\lim_{\eps \to 0} \eps \ln p^\eps_{l}= - U(x_0,\set{\xi = l}).
$$ 
The overestimation part, denoted $\loss_{\mrm{O}}$, is associated with the large deviation equivalent of the factor $\gamma_{l}^\eps\p{(q^\eps_{\lmax})^2}$ which satisfies
$$
\lim_{\eps \to 0} \eps \ln \gamma_{l}^\eps\p{(q^\eps_{\lmax})^2}
=  - \inf_{\set{\xi = l}} \b{U^{(l)}(x_0, \, . \, ) + 2 U( \, . \,,B) } .
$$ This decomposition is also motivated by the following to facts:
\begin{itemize}
 \item Each part is non-negative and identically $0$ when weak asymptotic efficiency is achieved.
 \item The conditions ensuring that each part is $0$ are mostly independent in terms of the importance function $\xi$, as both are defined by two different minimization problems.
\end{itemize}

In order to be more precise, we can consider for each $l$ a state $x_\ast(l) \in \set{\xi = l} $ such that
$$
U(x_0,B) = U(x_0,x_\ast(l)) + U(x_\ast(l),B). %= \inf_{\set{\xi = l}} U(x_0,\,.\,) + U(\,.\,,B).
$$
A continuous level-indexed path $l \mapsto x_\ast(l)$ satisfying the above condition is called an {\em instanton} in physics literature (\textit{e.g.} \cite{bouchet2014langevin}). % is usually uniquely defined when i) the trajectory minimizing the rate function defining the cost $U(x_0,B)$ is unique (the least unlikely trajectory from $x_0$ to $B$), and ii) this minimizing trajectory intersects $\set{\xi \geq l}$ at a single point $\xi_{\ast}(l)$. See Figure~\ref{fig:1} and~\ref{fig:2}. Such assumptions are not generically true and are \emph{not} required to prove our results. 
Then one can consider the decomposition
\[
 \loss(l) = \loss_{\mrm{U}}(l)+\loss_{\mrm{O}}(l),
\]
where we define
\[
 \loss_{\mrm{U}}(l) \eqdef U(x_0,x_\ast(l)) - U(x_0,\set{\xi=l}) \geq 0,
\]
as well as 
\[
 \loss_{\mrm{O}}(l) \eqdef U(x_0,x_\ast(l)) + 2 U(x_\ast(l),B) - \inf_{\set{\xi = l}} \b{U^{(l)}(x_0, \, . \, ) + 2 U( \, . \,,B) } \geq 0.
\]

The underestimation part of the loss function $\loss_{\mrm{U}}(l)$ for a given $l$ is obtained by the minimizing trajectories from $x_0$ to the set $\set{\xi = l}$ which yield the cost $U(x_0,\set{\xi \geq l})$. An example of such a trajectory (assuming it exists for simplicity) is denoted $\pa{x}_{\loss_{\mrm{U}}(l)}$ and is depicted in Figure~\ref{fig:1} and~\ref{fig:2}. A first result is that $\loss_{\mrm{U}}(l) = 0$ if and only if $U(x_0,\set{\xi \geq l}) = U(x_0,x_\ast(l))$, or equivalently, if and only if the level set ${\xi = l}$ is {\em above} the level set $\set{U(x_0, \, . \, ) = U(x_0,x_\ast(l))}$, see Figure~\ref{fig:2}. \medskip

The overestimation part of the loss $\loss_{\mrm{O}}(l)$ for a given $l$ is 
characterized by 
% the minimizing trajectory associated with the composed cost 
% $$
% \inf_{\set{\xi = l}} \b{U(x_0, \, . \, ) + 2 U( \, . \,,B) },
% $$
% that is, The latter is denoted $\pa{x}_{\loss_{\mrm{O}}(l)}$ and is
trajectories minimizing the sum of the rate function from $x_0$ to the first hitting time of the level $\set{\xi = l}$ plus \emph{twice} the rate function from the associated entrance point up to $\set{\xi \geq \lmax}$. An example of such a trajectory denoted $\pa{x}_{\loss_{\mrm{O}}(l)}$ is depicted in Figure~\ref{fig:1} and Figure~\ref{fig:2}. Note that $\loss_{\mrm{O}}(l) = 0$ if and only if the composed cost above is attained at the point $x_{\ast}(l)$, see again Figure~\ref{fig:1}. A \emph{sufficient} condition (but not necessary) ensuring $\loss_{\mrm{O}}(l) = 0$ is that the level set ${\xi = l}$ is {\em below} the level set $\set{ U(\, . \,, B ) = U(x_\ast(l),B) } $, see Figure~\ref{fig:2}. This condition is not necessary because contrary to the underestimation part, this overestimation part of the loss involves a competition between an initial cost from $x_0$ and a final cost up to $B$. \medskip

\begin{figure}
\begin{center}
\begin{tikzpicture}
 
% Include the image in a node
\node [
    above right,
    inner sep=0] (image) at (0,0) {\includegraphics[width=9cm]{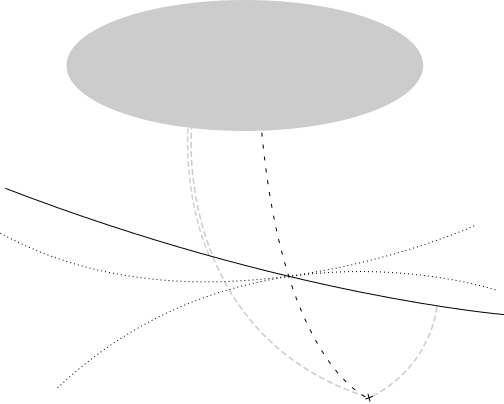}};
 
% Create scope with normalized axes
\begin{scope}[
x={($0.1*(image.south east)$)},
y={($0.1*(image.north west)$)}]
 
% % Grid
%     \draw[lightgray,step=1] (image.south west) grid (image.north east);
%  
% % Axes' labels
%     \foreach \x in {0,1,...,10} { \node [below] at (\x,0) {\x}; }
%     \foreach \y in {0,1,...,10} { \node [left] at (0,\y) {\y};}
 
% Labels
    \node[above right,black,fill=white] at (2.1,7.2){$B=\set{\xi \geq 1}$};
    
%     \draw[latex-,thick,gray] (1.5,4.15) -- ++(0.,+.35)
%         node[above,black,fill=white]{$\set{U(\, . \,,B)=u_\ast(l)}$};
    
    \draw[latex-,thick,gray] (8.5,4.0) -- ++(0.,+.45)
        node[above,black,fill=white]{ \begin{tabular}{l}
        $\Big \{ U(\, . \,,B)$ \\ $ \quad = U(x_\ast(l),B)\Big \}$
        \end{tabular}
        };
        
    \draw[latex-,thick,gray] (1.5,0.5) -- ++(0.,-.45)
        node[below,black,fill=white]{$\Big \{ U(x_0,\, . \,) = U(x_0,x_\ast(l)) \Big \}$};
        
    \draw[latex-,thick,gray] (1.8,4.5) -- ++(-0.35,0.)
        node[left,black,fill=white]{$\Big \{ \xi = l \Big \}$};
        
    \draw[latex-,thick,gray] (3.8,5.5) -- ++(-0.35,0.)
        node[left,black,fill=white]{$\set{x_{\loss_{\mrm{O}}(l)}}$};
        
    \node[circle,black,fill=white] at (7.5,0.2){$x_0$};
    
    \draw[latex-,thick,gray] (5.7,4.) -- ++(+0.35,0.)
        node[right,black,fill=white]{$\set{x_\ast}$};
        
    \draw[latex-,thick,gray] (8.3,1.) -- ++(+0.35,0.)
        node[right,black,fill=white]{$\set{x_{\loss_{\mrm{U}}(l)}}$};
        
%%%%%%%%% TIKZ EXAMPLE %%%%%%        
%     \draw[latex-, very thick,green] (2.5,1) -- ++(-0.5,0)
%         node[left,black,fill=white]{\small Voltage source};
        
%     \node[circle,fill=green] at (7.25,6.75){\small 2};

%     \draw[latex-, very thick,green] (2.5,1) -- ++(-0.5,0)
%         node[left,black,fill=white]{\small Voltage source};
 
% %     \draw[stealth-, very thick,green] (5,1.75) -- ++(0.5,-0.5)
%         node[right,black,fill=white]{\small Dspace card};
%  
%     \draw[very thick,green] (0.5,2.5) rectangle (4,9.5) 
%         node[below left,black,fill=green]{\small 1};
 
%     \draw[latex-, very thick,green] (5.5,4) edge (5.5,5.5)
%         (5.75,4.5) -- (5.5,5.5)
%         node[above,black,fill=white]{\small R-L load};
\end{scope}
\end{tikzpicture}
\end{center}
\caption{Graphical interpretation of the main small noise variance formula~\eqref{eq:loss}. The path $\set{x_\ast}$ is the minimizer defining the cost $U(x_0,B)$. The path $\set{x_{\loss_{\mrm{U}}(l)}}$ represents 'typical but unuseful' trajectories, and is the minimizer of the cost $U(x_0,\set{\xi = l})$, that defines the underestimation part of the loss function. The path $\set{x_{\loss_{\mrm{O}}(l)}}$ represents 'lucky' trajectories, and is the minimizer of the cost $\inf_{\set{\xi=l}} U(x_0,\, . \,) + 2 U(\, . \,, B   )$ that defines the overestimation part of the loss function.}\label{fig:1}
\end{figure}

\begin{figure}
\begin{center}
\begin{tikzpicture}
 
% Include the image in a node
\node [
    above right,
    inner sep=0] (image) at (0,0) {\includegraphics[width=9cm]{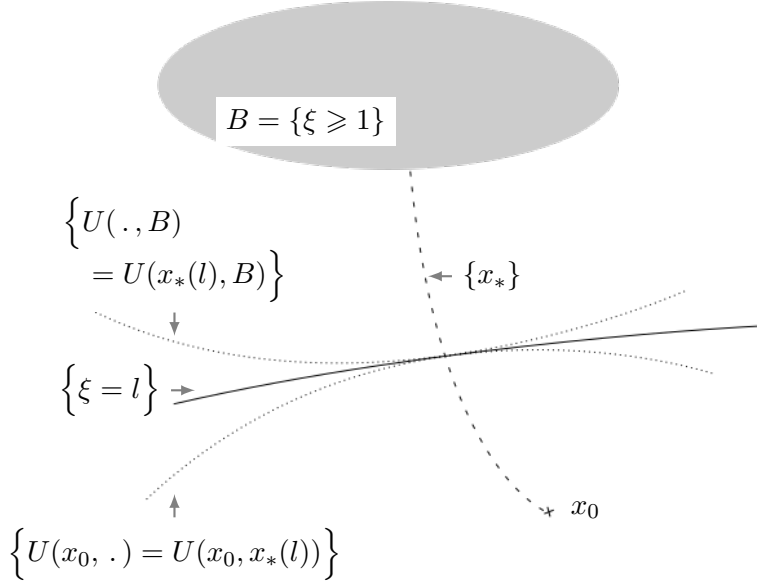}};
 
% Create scope with normalized axes
\begin{scope}[
x={($0.1*(image.south east)$)},
y={($0.1*(image.north west)$)}]
 
% % Grid
%     \draw[lightgray,step=1] (image.south west) grid (image.north east);
%  
% % Axes' labels
%     \foreach \x in {0,1,...,10} { \node [below] at (\x,0) {\x}; }
%     \foreach \y in {0,1,...,10} { \node [left] at (0,\y) {\y};}
 
% Labels
    \node[above right,black,fill=white] at (2.1,7.2){$B=\set{\xi \geq 1}$};
    
%     \draw[latex-,thick,gray] (1.5,4.15) -- ++(0.,+.35)
%         node[above,black,fill=white]{$\set{U(\, . \,,B)=u_\ast(l)}$};
    
    \draw[latex-,thick,gray] (1.5,3.55) -- ++(0.,+.45)
        node[above,black,fill=white]{ \begin{tabular}{l}
        $\Big \{ U(\, . \,,B)$ \\ $ \quad = U(x_\ast(l),B)\Big \}$
        \end{tabular}
        };
        
    \draw[latex-,thick,gray] (1.5,0.5) -- ++(0.,-.45)
        node[below,black,fill=white]{$\Big \{ U(x_0,\, . \,) = U(x_0,x_\ast(l)) \Big \}$};
        
    \draw[latex-,thick,gray] (1.8,2.5) -- ++(-0.35,0.)
        node[left,black,fill=white]{$\Big \{ \xi = l \Big \}$};
        
    \node[circle,black,fill=white] at (7.5,0.2){$x_0$};
    
    \draw[latex-,thick,gray] (5.2,4.7) -- ++(+0.35,0.)
        node[right,black,fill=white]{$\set{x_\ast}$};
        
%%%%%%%%% TIKZ EXAMPLE %%%%%%        
%     \draw[latex-, very thick,green] (2.5,1) -- ++(-0.5,0)
%         node[left,black,fill=white]{\small Voltage source};
        
%     \node[circle,fill=green] at (7.25,6.75){\small 2};

%     \draw[latex-, very thick,green] (2.5,1) -- ++(-0.5,0)
%         node[left,black,fill=white]{\small Voltage source};
 
% %     \draw[stealth-, very thick,green] (5,1.75) -- ++(0.5,-0.5)
%         node[right,black,fill=white]{\small Dspace card};
%  
%     \draw[very thick,green] (0.5,2.5) rectangle (4,9.5) 
%         node[below left,black,fill=green]{\small 1};
 
%     \draw[latex-, very thick,green] (5.5,4) edge (5.5,5.5)
%         (5.75,4.5) -- (5.5,5.5)
%         node[above,black,fill=white]{\small R-L load};
\end{scope}
\end{tikzpicture}
\end{center}
\caption{A sufficient condition for weak asymptotic efficiency, given by the simultaneous two conditions $\loss_{\mrm{U}}(l)=\loss_{\mrm{O}}(l)=0$ for all $l \in [\xi(x_0),l_{\mrm{max}]}$. Note that the level sets of the importance function $\xi$ is {\em  between} the level sets of the quasi-potential cost i) from the initial condition, and ii) up to the final set $B$. Mo rover, $\set{x_{\loss_{\mrm{O}}(l)}} = \set{x_{\loss_{\mrm{U}}(l)}} = \set{x_\ast }$ is a (non necessarily unique) instanton.}\label{fig:2}
\end{figure}

Note that the obtained sufficient condition involving $x_\ast(l)$ for weak asymptotic efficiency is exactly the one given in theorem~\ref{th:suff}. Under that condition, minimizer of either $\loss_{\mrm{U}}(l)$ or $\loss_{\mrm{O}}(l)$ can be identified with instantons minimizing the rate function among trajectories reaching $B$ before $A$.  \medskip 

Note also that the geometric conditions ensuring $\loss_{\mrm{O}} = \loss_{\mrm{U}} \equiv 0$ stated above are much weaker than the restriction that $\xi$ is defined by the limiting committor function $\xi = \xi_\ast$.
% ***CITE LELIEVRE*** :
% $$
% \xi(x) = \lim_{\eps \to 0} \eps \log \P_{x}(\tau_{B}(\pa{X^\eps}) < \tau_{A}(\pa{X^\eps}) ),
% $$
Note that, contrary to the latter, our conditions depends on the initial condition $x_0$, so that a choice of weakly asymptotic efficient $\xi$ for a given initial condition may not be so for a different initial condition. A related relaxed class of {\em optimal} $\xi$ is given by \emph{sub-solutions of the Hamilton-Jacobi equation} that underlies the rate function of the LDP satisfied by $(\pa{X}^\eps)_{\eps \geq 0}$, see \cite{dean-dupuis-09,dean-dupuis11,cai-dupuis13,budhi-dupuis-book}. This will discussed in a section below. \medskip

The probabilistic and algorithmic interpretation of $\loss_{\mrm{U}}(l)$ for a given $l$ is the following. $\loss_{\mrm{U}}(l)$ will be large when 
%by a large deviations argument (of Gibb's conditioning type), $\eta^\eps_l$ is concentrated when $\eps \to 0$ towards the minimizers associated with 
when the minimizers associated with the optimal cost $U(x_0,\set{\xi = l})$ do not correspond to $x_{\ast}(l)$ where $x_{\ast}$ is an instanton (a minimizer associated with the global cost $U(x_0,\set{\xi = \lmax})$). This means that the conditional distribution $\eta^\eps_{l}$ which is concentrated towards the minimizers associated with $U(x_0,\set{\xi = l})$ (by a standard Gibb's conditioning argument in large deviations theory). $\loss_{\mrm{U}}(l)$ thus quantifies the likelihood of 'typical but unuseful trajectories', as discussed in Section~\ref{sec:related_var}. The large deviation picture is depicted in Figure~\ref{fig:2}. Those trajectories are eventually associated with underestimation. We stress that the quantity $\loss_{\mrm{U}}$ is not related to the specific value of this underestimation (the latter is rather encoded by the function $U(\,.\,,B)$ evaluated at the minimizer associated with $U(x_0,\set{\xi = l})$, which does not appear in the definition of the loss function). In an AMS algorithm, $\loss_{\mrm{U}}(l)$ can be associated with the proportion of clones that will be quickly killed after having reached the level $l$.  

%and thus  The clones will thus compute the rare event probability starting from the minimizers associated with $U(x_0,\set{\xi = l})$ and eventually \emph{underestimate} the final target probability $p^\eps_\lmax$ because they come from sub-optimal states. $\loss_{\mrm{U}}$ is rather a quantification of the tail probability of non pathological outliers in $\eta^\eps_{l}$ which are not invovled in underestimation.\medskip

%$\loss_{\mrm{U}}$ rather quantifies through the quasi-potential the very high likelihood to have clones in those (eventually) sub-optimal states. The contribution to variance is simply due to the fact that    \medskip

% \OLD{
% In terms of the variance of the equivalent idealized estimator **ref**, $\loss_{\mrm{U}}$ is associated with \emph{typical errors} induced by the concentration of the distribution of $X^\eps(l)$ that yields the minimal cost $U(x_0,\set{\xi = l})$, which can happen far away from $x_\ast(l)$. The idealized estimator}

 %It can thus be conjectured that in some sense, this typica underestimation is related to: 
% where the algorithm picks a non-optimal (hence more rare) path from $x_0$ to $B$. \medskip

The algorithmic interpretation of $\loss_{\mrm{O}}(l)$ for a given $l$ is complementary. We have seen that %Using again the variance formula~\eqref{eq:p_th}, we know that 
it is associated with large values of the quantity $\gamma_l^\eps((q^\eps_{\lmax})^2)$. As discussed in Section~\ref{sec:related_var}, this term can be associated with specific {\em lucky} trajectories which turn out to be very contributive to variance in the end because they can reach the rare event set $\set{\xi \geq \lmax}$ with a relatively large probability $q^\eps_{\lmax}$. This is quantified in the loss function by the variational problem:
$$
 \inf_{\set{\xi = l}} \b{U^{(l)}(x_0, \, . \, ) + 2 U( \, . \,,B) }
$$
in which the final cost $U( \, . \,,B)$ counts \emph{twice} as compared to 
$$
 \inf_{\set{\xi = l}} \b{U^{(l)}(x_0, \, . \, ) + U( \, . \,,B) } = U(x_0,B)
$$
which is minimized by global minimzers $x_\ast$. This implies that states with lower final cost $U( \, . \,,B)$ are much preferred. These states defines the 'lucky outliers' discussed in Section~\ref{sec:related_var}. A typical lucky trajectory is depicted in Fig.\ref{fig:2}. In an AMS algorithm, $\loss_{\mrm{U}}$ is associated with the overestimation by the small fraction of clones that are the most likely to reach $B$ after having reached the level $l$. 
%the trajectories of those clones of iteration $I_l$ that will subsequently have. %having those {\em lucky} outliers then contribute to the overall estimation by \emph{overestimating} the target probability $p^\eps_{\lmax}$ through their many offsrpings.% Here again, the compensation of \emph{typical} underestimation by \emph{rare} overestimation is a classical phenomenon in unbiased rare event estimation. \medskip

% \OLD{-- in other words those are states $x \in {\xi = l}$ such that
% $$
% \P_{ \pa{X}^\eps_{\tau_l(\pa{X})} \in \d x \mid    \tau_l(\pa{X}) < \tau_A(\pa{X})}^2
% $$
% is relatively high. 
% }
% 
% \OLD{Remark that, in AMS, overestimation is a consequence of a large gap between the increasing of levels $L_1,L_2,\ldots$ which overall decrease the total number of iterations (see equation ***REF**). The 'lucky' outliers have an over-estimating contribution because they will eventually propose trajectories with unusually high maximum levels (above $l$), thanks to their 'lucky', untypical, initial states in $\set{\xi =l}$.} 
% 
% 
% \OLD{
% Using formula ***, This can be rephrased using the idealized estimator **: the estimator is lucky and the conditional distribution $X^\eps(l)$ is close to a specific area for which the rare event probability (starting from $X^\eps(l)$) is very large and thus contributes a lot to the variance.} 
% 

\subsection{Summary}

 To summarize our main results, we proved that there exists a \emph{critical level} $l_\ast$ that will contribute mostly to variance. This contribution is described by the distribution $\eta_{l_\ast}^\eps$ of trajectories at the first hitting times of $l_\ast$ and the associated probability $p_{l_\ast}^\eps$ to reach level ${l_\ast}$ (all before $A$). The logarithmic equivalent of the relative variance can also be decomposed into two independent non-negative terms. The first term, $\loss_{\mrm{U}}({l_\ast})$, quantifies the likelihood to have 'typical but unuseful' states in the distribution $\eta_{l_\ast}^\eps$. The second term, $\loss_{\mrm{O}}({l_\ast}) \geq 0 $, quantifies the overestimation by outliers in $\eta_{l_\ast}^\eps$ that are likely to eventually reach $B$. We also provide a simple geometric sufficient (resp. necessary and sufficient) condition on $\xi$ depicted in Figures~\ref{fig:1} and~\ref{fig:2} such that $\loss_{\mrm{O}} \equiv 0$ or $\loss_{\mrm{U}} \equiv 0$. %Loosely speaking, $\loss_{\mrm{U}} \equiv 0$  if and only if for each $l$ the level sets of the importance function $\xi$ are always 'above' the level sets of likelihood cost (the quasi-potential) from the initial condition  $\set{U(x_0, \, . \, ) = U(x_0,x_\ast(l))}$. $\loss_{\mrm{O}} \equiv 0$ if the level sets of the importance function $\xi$ are always 'below' the level sets of likelihood cost (the quasi-potential) up to the final rare event set  and the iso-cost set to $B$, $\set{ U(\, . \,, B ) = U(x_\ast(l),B) } $.
 
 \subsection{Interpretation as a Hamilton-Jacobi sub-solution}\label{sec:HJ}

% A sub-solution $f$ is a function that satisfies the inequality $f(x)-f(y) \leq U(x,y)$ for all  $x,y \in \set{\xi \leq \lmax}$. This is however a much more demanding condition than~\eqref{eq:suff_bis} because: i) our condition is invariant by monotone reparametrization of the importance function $\xi$: it is the same for $\xi$ and for $F(\xi)$, $F$ being a strictly increasing real valued function; and ii) the sub-solution inequality need to be true \emph{only} for initial points in the intial condition of $\pa{X}^\eps$, and for final points in $B$. More comparisons with known results in the literature will be found in Section {\red ****}.\medskip

We have thus obtained a simple geometric sufficient criteria for weak asymptotic efficiency ($\sup_l\loss(l) = 0$): for each $l$ the level set ${\xi = l}$ lies in between the iso-cost set from $x_0$, as depicted in Figure~\ref{fig:2}. Formally, this amounts to the the existence of an increasing function $F$ (realized by the cost along $x_\ast$) such that
\begin{equation}\label{eq:suff_bis}
%\eqref{eq:suff} \Leftrightarrow 
\begin{cases}
F(\xi(x_0)) - F(\xi(y)) \leq U(x_0,y), \, &\forall y \in \set{\xi \in [\xi(x_0),\lmax]} \\
F(\xi(x)) - F(\xi(y)) \leq U(x,y), \, &\forall (x,y) \in \set{\xi \in [\xi(x_0),\lmax]} \times \set{\xi \geq \lmax} 
 \end{cases}
\end{equation}
This expression is related to \emph{sub-solutions of the Hamilton-Jacobi equation} that underlies the rate function of the LDP satisfied by $(\pa{X}^\eps)_{\eps \geq 0}$ (see \textit{e.g.} \cite{dean-dupuis-09,dean-dupuis11,cai-dupuis13,budhi-dupuis-book}). A sub-solution $f$ is a function that satisfies the inequality $f(x)-f(y) \leq U(x,y)$ for all  $x,y \in \set{\xi \leq \lmax}$. This is however a much more demanding condition than~\eqref{eq:suff_bis} because: i) we do not need to compute the reparametrization $F$ which be given by $F(\xi(x_\ast(l))) = U(x_\ast(l),B) + \mrm{cte}$; and ii) the sub-solution inequality need to be true \emph{only} for initial points in the initial condition of $\pa{X}^\eps$, and final points in $B$.  See also Section~\ref{sec:related} for comments on related work.

%  \OLD{
%  It is   of the loss function in an overestimation and underestimation part above is particularly insightful when simplifying REF into a simple Bernoulli estimator of a small probability consisting of a typical underestimation and a rare over estimation. In short:
%  \begin{itemize}
%   \item 
%   \item 
%  \end{itemize}
%  partinformation on the variance can be used to tune the number of indepedent repetition of the algorithm required to obtain a reliable result. Indeed,  
% }

\subsection{Possible practical consequences}\label{sec:practice}
Finally, there are several consequences of our results for practical purposes. We list them below. All are left for future work. \medskip

The first question is about estimating the variance~\eqref{eq:p_th} or equivalently the loss~\eqref{eq:loss} which are the main result of this paper.

\begin{itemize}
 \item An idea is to try to estimate a non-asymptotic (with respect to $\eps$) form of the variance formula~\eqref{eq:p_th} after one realisation of the AMS algorithm. Variance estimation has recently been studied in \cite{chan2013general,lee2018variance,du2021variance}. The present work suggests that variance estimation can be considerably simplified at the cost of being accurate only asymptotically for large $N$ and small $eps$. For instance, one can first estimate on the states $x$ visited by the clones the probability $q_\eps(x)$ to reach the final set $\set{\xi \geq \lmax}$. This can be done using the genealogy of the clones and the formula~\eqref{eq:estim_cond}. One can then proceeds using the formula~\eqref{eq:p_th} by averaging those estimations over $\eta_l^{\eps}$ for each $l$, and then by minimizing on the level $l$.

 \item We will also remark in Section~\ref{sec:N=2} that the loss function can be expressed as a new, simulable, rare event probability. The latter is the probability that an AMS algorithm with $N=2$ clones succeeds in at most one iteration. This fact may be used to estimate $\sup_{l}\loss(l)$ using a secondary Monte Carlo rare event algorithm, more appropriate than the first one since it will purposely simulate the rare clones involved in the overestimation (that drives the variance) of the final probability.
 \end{itemize}
 
 \begin{itemize}
 \item The variance estimation proposed in the first item above can in fact estimate the variance obtained with various importance function $\xi$, using, say, an AMS algorithm performed with a given reference $\xi_0$. Indeed, the only quantity depending on $\xi$ is the first hitting place $\eta_l^\eps$ associated with set $\set{\xi \geq l}$, but this can be estimated using the full genealogical estimator~\eqref{eq:estim_cond}.
 
 \item There is currently a lot of effort in practical applications aimed at optimizing the importance function $\xi$ in order to obtain reliable results, see for instance the references in the review for molecular simulation applications~\cite{rogal2021reaction}, or the paper~\cite{lucente2022coupling} that uses a data-driven approach. Our analysis provides insights on the minimal conditions an importance function must satisfy in order to provide efficiency. In particular, instead of trying to exactly compute the committor function $\xi_\ast$, one may try, after a rare event simulation, to update $\xi$ by trying to minimize the rough estimation of the variance as discussed in the previous item. 
 
\end{itemize}

\subsection{Comparison to previous work}\label{sec:related}

The idea to analyse rare event (multi-level) splitting Monte Carlo simulation algorithms in a large deviation setting has been mainly developed by P. Dupuis and his co-authors. 

In~\cite{dean-dupuis-09} a fixed multilevel splitting method with varying number of clones is studied. The main difference from our study is that the clones do not interact with each other through the splitting mechanism and the rate of splitting is given by the variations of the values of the level function. The authors then show that a sufficient condition to obtain asymptotic efficiency in a large deviation small noise limit is that the level function must be a sub-solution of the Hamilton-Jacobi problem associated with the Lagrangian formulation of the rate function (see Section ...). They also argue that this condition should be necessary. This type of condition on the level function (or importance function) is reminiscent to the type of condition required on the importance function in importance sampling in order to achieve asymptotic efficiency (see \textit{e.g.}~\cite{dupuis-wang07,dupuis-wang09,dupuis-wang04}); note that the importance function must in addition be a smooth sub-solution which highlights the likely generic lack of robustness of importance sampling as compared to importance splitting. \medskip

As compared to our work, those result are more precise in the sens that the number of clones (although randomly varying) is finite, whereas our analysis is restricted to asymptotic (in terms of clones sample size) variance. However, our work suggest three (related with each other) improvements enabled by the adaptivity of levels and the fixed number of clones:
\begin{itemize}
 \item Weak asymptotic efficiency can be achieved if \emph{some parametrization} of the level function satisfies a certain weaker condition related to sub-solutions of the considered Hamilton-Jacobi problem. This condition is sufficient but not necessary.
 \item The notion of sub-solution is weaker: it only has to be one with respect to the support of the initial condition and the final target set (and not for every pair of points in space).
 \item We do not face the problem of explosion or implosion of the total number of clones. In~\cite{dean-dupuis-09} the splitting rate has to be tuned carefully -- close to the inverse of probability of transitions between levels -- to avoid such population size issues, even if the population size can still grow polynomially. 
\end{itemize}

Similarly~\cite{dean-dupuis11} studies a variant called RESTART which enables to reduce the trajectory length of most of the clones. The analysis is also improved (the notion of sub-solution is defined variationally instead of as a viscosity solution of a PDE) and then recapitulated in~\cite{budhi-dupuis-book}. In~\cite{cai-dupuis13}, the authors nonetheless studies a splitting algorithm with fixed number of clones, but in dimension one only (asymptotic efficiency is then conditionless).

\section{Large Deviations estimates and proof of the main result}\label{sec:proofs}
This Section is devoted to the the large deviations estimates that eventually lead to the main result theorem~\ref{th:maintheo}.
\subsection{Stopping times and topology}
We start that a key technical remark on the semi-continuity of stopping times.
\begin{Lem}\label{lem:lsc} Let $E$ be a Polish space, $T >0$ be given, and $A,B \subset E$. The map $\pa{x} \mapsto \tau_A^-(\pa{x})$ (resp. $\tau^+_A$) is lower (resp. upper) semi-continuous as a function of $C([0,T],E)$ with the topology of uniform convergence. In particular,
 $$
 \set{\pa{x} \mid \tau_A^+(\pa{x}) < \tau_B^-(\pa{x})}
 $$
 is an open subset of $C([0,T],E)$.
\end{Lem}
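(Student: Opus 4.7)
The overall approach is the standard hitting-time semi-continuity argument: lower semi-continuity of $\tau_A^-$ comes from the fact that $\overline A$ is closed, upper semi-continuity of $\tau_A^+$ comes from the fact that $\mathring A$ is open, and the openness of $\{\tau_A^+<\tau_B^-\}$ is a direct combination of the two. I would first establish the two semi-continuity statements separately, and then deduce the final statement.

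For the lower semi-continuity of $\tau_A^-$: I would fix $\pa{x}^n \to \pa{x}$ uniformly in $C([0,T],E)$, set $t_n \eqdef \tau_A^-(\pa{x}^n) \in [0,T]\cup\{+\infty\}$, and extract a subsequence (still denoted $t_n$) realizing $L \eqdef \liminf_n t_n$. If $L=+\infty$ there is nothing to prove. Otherwise one has $t_n < +\infty$ for $n$ large, so by right-continuity at the infimum combined with closedness of $\overline A$ one gets $\pa{x}^n_{t_n} \in \overline A$. Then the triangle inequality
\[
d\p{\pa{x}^n_{t_n},\pa{x}_L} \le d\p{\pa{x}^n_{t_n},\pa{x}_{t_n}} + d\p{\pa{x}_{t_n},\pa{x}_L},
\]
whose first term vanishes by uniform convergence and whose second vanishes by continuity of $\pa{x}$, yields $\pa{x}^n_{t_n}\to \pa{x}_L$. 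Closedness of $\overline A$ then gives $\pa{x}_L \in \overline A$, hence $\tau_A^-(\pa{x})\le L = \liminf_n \tau_A^-(\pa{x}^n)$.

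For the upper semi-continuity of $\tau_A^+$: writing $t \eqdef \tau_A^+(\pa{x})$, I would argue that the case $t=+\infty$ is trivial, and for $t<+\infty$ I would use the definition of the infimum to find, for any $\delta>0$, some $s \in [t,t+\delta)$ with $\pa{x}_s \in \mathring A$. Since $\mathring A$ is open and $\pa{x}^n_s \to \pa{x}_s$, for $n$ large one has $\pa{x}^n_s \in \mathring A$, hence $\tau_A^+(\pa{x}^n)\le s < t+\delta$. Letting $\delta\to 0$ proves $\limsup_n \tau_A^+(\pa{x}^n)\le \tau_A^+(\pa{x})$. The last statement is then immediate: if $\tau_A^+(\pa{x}) < \tau_B^-(\pa{x})$ and $\pa{x}^n\to \pa{x}$, then
\[
\limsup_n \tau_A^+(\pa{x}^n) \le \tau_A^+(\pa{x}) < \tau_B^-(\pa{x}) \le \liminf_n \tau_B^-(\pa{x}^n),
\]
so the strict inequality persists for all $n$ large enough, proving openness.

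I expect no real obstacle here: the argument is essentially routine topology, and the only minor subtlety is the bookkeeping around the convention $\inf\emptyset=+\infty$ and the verification that the infimum defining $\tau_A^-(\pa{x}^n)$ is actually attained when finite (which follows from continuity of $\pa{x}^n$ and closedness of $\overline A$).
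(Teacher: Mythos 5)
Your proof is correct, and it is a standard and clean hitting-time semi-continuity argument. The paper itself does not give a proof at this point: it simply cites Lemma~A.4 of an earlier AMS paper (the reference \texttt{[cdgr2]}). Your sketch fills in what that citation hides, and the route you take is exactly the one that the referenced lemma follows: lower semi-continuity of $\tau_A^-$ from closedness of $\overline A$ (the key step being that for continuous paths the set $\{t : \pa{x}^n_t \in \overline A\}$ is closed, hence contains its infimum when finite, which makes the sequential limit argument through $\pa{x}^n_{t_n} \to \pa{x}_L$ work), upper semi-continuity of $\tau_A^+$ from openness of $\mathring A$ (using that any $s \in [t,t+\delta)$ with $\pa{x}_s \in \mathring A$ propagates to $\pa{x}^n_s \in \mathring A$ for $n$ large), and openness of $\{\tau_A^+ < \tau_B^-\}$ by combining the two via the sequential characterization of openness in the metric space $C([0,T],E)$. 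All the edge cases you flag (the convention $\inf\emptyset = +\infty$, attainability of the infimum for $\tau_A^-$, the trivial case $t = +\infty$ for the upper bound) are genuinely the only subtleties, and you handle them correctly. Nothing is missing.
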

\begin{proof}
 See Lemma~$A.4$ in~\cite{cdgr2}. 
\end{proof}

\subsection{Continuity of the cost to reach level sets}
Let us prove Lemma~\ref{lem:cont:UL}. \medskip.

\underline{Right continuity.} Let $l_\ast$ be a given level. By definition of $U$, there is for each $\delta > 0$ a trajectory $\pa{x}^\delta$ with $U(x_0,\set{\xi \geq l_\ast}) + \delta \geq I_{[0,\tau^-_{l_\ast}(\pa{x}^\delta)]}[\pa{x}^\delta]$. By Assumption~\ref{ass:cont}, it is possible to strictly extend $\pa{x^\delta}$ so that $\tau^-_{l_\ast}(\pa{x}^\delta)=\tau^+_{l_\ast}(\pa{x}^\delta)$ and $U(x_0,\set{\xi \geq l_\ast}) + 2\delta \geq I_{[0,T]}[\pa{x}^\delta]$. This implies that there exists a $l = \max_t \xi(\pa{x}^\delta_t) > l_\ast$ with $U(x_0,\set{\xi \geq l_\ast}) + 2\delta \geq U(x_0,\set{\xi \geq l})$. The result follows since $\delta$ is arbitrary and the function non-decreasing.\medskip

\underline{Left continuity.}  Let $l$ be an arbitrary level,  $l_k$ be increasing with $k$ and converging to $l$, and $\delta > 0$ be given, arbitrary. By definition of $U$ and by Assumption~\ref{ass:time}, there is a time horizon $T>0$ and a sequence of paths $\pa{x}^k$ such that $I_{[0,T]}[\pa{x}^k] \leq U(x_0,\set{\xi \leq l_k}) + \delta$. Since $I$ is a good rate function in Assumption~\ref{ass:ldp} (lower semi-continuous with compact pull-back of closed bounded above intervals), one can extract a (uniformly) converging sub-sequence of paths such that $I_{[0,T]}[\pa{x}^\infty] \leq \liminf_k I_{[0,T]}[\pa{x}^k]$. By construction $\liminf_k I_{[0,T]}[\pa{x}^k] \leq \lim_k U(x_0,\set{\xi \leq l_k}) + \delta $, and by continuity of $\xi$, $\tau^-_l(\pa{x}^\infty \leq T)$ which implies $I_{[0,T]}[\pa{x}^\infty] \geq U(x_0,\set{\xi \geq l})$, hence the result.

\subsection{Small noise asymptotics of the rare event}\label{sec:unif_proba}

We can now turn to the proof of Lemma~\ref{lem:unif_proba}. We recall that the latter states that under~\Cref{ass:ldp,,ass:cont,,ass:boundA,,ass:time}, then
$
 \lim_{{\eps \to 0 ,\, x \to x_0}} \eps \ln \P_{x}\b{\tau_l(\pa{X}^\eps) < \tau_A(\pa{X}^\eps)} = - U(x_0,\set{\xi\geq l}) .
$

\begin{proof}[Proof of Lemma~\ref{lem:unif_proba}] {\,} \medskip

 \textbf{Upper bound.} Assume $T > t_\ast$ given, arbitrary large. We can then consider the upper bound
\begin{align*}
 \P_{x}\b{\tau_l(\pa{X}^\eps) < \tau_A(\pa{X}^\eps)} %&\geq \P_{x}\b{\tau_l(\pa{X}^\eps) < \tau_A(\pa{X}^\eps)} \\
 & \leq \P_{x}\b{\tau_l^-(\pa{X}^{\eps}) \leq \tau_A^+(\pa{X}^{\eps}) \wedge T } + \P\b{\tau_A^+(\pa{X}^{\eps}) \geq T } 
%  \\
%  & \leq \P_{x}\b{\tau_l^-(\pa{X}^{\eps,T}) \leq T } + \P\b{\tau_A^+(\pa{X}^{\eps,T}) \geq T } .
\end{align*}
 On the one hand, using Assumption~\ref{ass:time}, one has $\limsup_{\eps} \eps \ln \P\b{\tau_A^+(\pa{X}^{\eps}) \geq T }  \leq - C_T$, with $\lim_{T+\infty} C_T = +\infty$. 
 On the other hand Lemma~\ref{lem:lsc} ensures that the set $\set{\pa{x}: \tau_l^-(\pa{x}^{}) \leq \tau_A^+(\pa{X}^{\eps}) \wedge T}$ is closed in $C([0,T],E)$. 
 Thus, the uniform LDP upper bound yields
 \begin{align*}
 -\limsup_{\substack{\eps \to 0 \\ x \to x_0}} \eps \ln \P_{x}\b{\tau_l(\pa{X}^\eps) < \tau_A(\pa{X}^\eps)} &\geq \min \b{ \inf_{\substack{\pa{x}\in C([0,T],E) \\ \pa{x}(0)=x_0\\\tau_l^-(\pa{x}) \leq T}} I_{[0,T]}\b{\pa{x} } , C_T } \\
 &\geq \min \p{ U(x_0,\set{\xi=l}),C_T},
\end{align*}
 $C_T$ being arbitrary large.
 
 \paragraph{Lower bound} 
Let $l\geq 0$ be a given level, $\delta > 0$ be given, arbitrarily small, and let $x_0 \in \set{\xi \geq \lmin}$ be an initial condition. Using the continuity property of Lemma~\ref{lem:cont:UL}, there is a small enough $h > 0$ such that $U(x_0,\set{\xi=l+h}) \leq U(x_0,\set{\xi=l}) + \delta/2$. One can thus construct a minimizing continuous path $\pa{x}_\ast$ with $\pa{x}_\ast(0)=x_0$ and a time $t_\ast \geq 0$ such that
\begin{itemize}
 \item[i)] $I_{[0,t_\ast]}\b{\pa{x}_\ast} \in [U(x_0,\set{\xi=l}),U(x_0,\set{\xi=l})+\delta/2]$,
 \item[ii)] $\xi(\pa{x}_\ast(t_\ast)) > l$ so that $\tau_l^+(\pa{x}_\ast) < t_\ast$,
 \item[iii)] $\tau^-_A(\pa{x}_\ast) > t_\ast$.
\end{itemize}
Extending the trajectory $\pa{x}_\ast$ with a $\delta/2$ minimizer of the rate function $I_{[t_\ast,T]}\b{\pa{x}_{t_\ast}}$, we obtain a path which satisfies for any arbitrary large final time $T > t_\ast$, $I_{[0,T]}[\pa{x}_\ast] \in [U(x_0,\set{\xi=l}),U(x_0,\set{\xi=l})+\delta]$. \medskip

%We will denote in the above $\pa{X}^{\eps,T}$ the restriction of $\pa{X}^\eps$ on the time interval $[0,T]$.

We can then consider the lower bound
\begin{align*}
 \P_{x}\b{\tau_l(\pa{X}^\eps) < \tau_A(\pa{X}^\eps)} %&\geq \P_{x}\b{\tau_l(\pa{X}^\eps) < \tau_A(\pa{X}^\eps)} \\
 \geq \P_{x}\b{\tau_l^+(\pa{X}^{\eps}) < t_\ast  < \tau_A^-(\pa{X}^{\eps})\wedge T } .
\end{align*}
Applying the uniform LDP to the open (see Lemma~\ref{lem:lsc}) set $$\set{\pa{x} \in C([0,T],E)\mid \tau_l^+(\pa{x}^{}) < t_\ast  < \tau_A^-(\pa{x}^{}) } $$ it yields
\begin{align*}
 - \liminf_{\substack{\eps \to 0 \\ x \to x_0}} \eps \ln \P_{x}\b{\tau_l(\pa{X}^\eps) < \tau_A(\pa{X}^\eps)} &\leq \inf_{\substack{\pa{x}\in C([0,T],E) \\\pa{x}(0)=x_0\\  \tau_l^+(\pa{x}) < t_\ast < \tau_A^-(\pa{x})}} I_{[0,T]}\b{\pa{x} } \\
 &\leq I_{[0,T]}\b{\pa{x}_\ast } \leq U(x_0,\set{\xi=l})+\delta,
\end{align*}
$\delta$ being arbitrary small.

\end{proof}

\subsection{Varadhan lemmas}\label{sec:varh}
In this section, we state and prove minor variants of the classical Varadhan lemmas, in the case where the potential function is general ($\eps$-dependent and only measurable). \medskip

We start with the easier lower bound.
\begin{Lem}
\label{lem:var:lower}
 Assume $\p{\pa{X}^\eps}_{\eps > 0}$ satisfies a LDP with rate function $I$ on a Polish state space. Let $\p{V_\eps}_{\eps > 0}$ be a family of measurable functions in $]-\infty,+\infty]$. Define for each state $x$ the upper semi-continuous envelope $V_+(x)$ of the latter by:
 \begin{equation}\label{eq:up_V}
 V_+(x) \eqdef \limsup_{\substack{\eps \to 0 \\ y \to x}} V_\eps(y) .
 \end{equation}
 Then
 $$
 -\liminf_{\eps \to 0} \eps \ln \E\b{ \e^{- \frac1\eps V_\eps}(\pa{X}^\eps)} \leq \inf\p{ I + V_+ } .
 $$
\end{Lem}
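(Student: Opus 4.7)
The plan is to fix an arbitrary $x_0 \in E$ and establish the pointwise lower bound
\begin{equation*}
\liminf_{\eps \to 0} \eps \ln \E\b{\e^{-V_\eps(\pa{X}^\eps)/\eps}} \geq -I(x_0) - V_+(x_0),
\end{equation*}
after which taking the infimum over $x_0 \in E$ would yield the claim. The cases $I(x_0)=+\infty$ or $V_+(x_0)=+\infty$ make the right-hand side $-\infty$ and can be discarded, so both may be assumed finite.

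First, I would exploit the joint-$(\eps,y)$ nature of the $\limsup$ in~\eqref{eq:up_V}: for every $\delta>0$, there exist an open neighborhood $O$ of $x_0$ and a threshold $\eps_0 > 0$ such that $V_\eps(y) \leq V_+(x_0)+\delta$ for all $y \in O$ and all $\eps \leq \eps_0$. Restricting the expectation to the event $\set{\pa{X}^\eps \in O}$ and inserting this uniform upper bound would give, for $\eps \leq \eps_0$,
\begin{equation*}
\E\b{\e^{-V_\eps(\pa{X}^\eps)/\eps}} \geq \e^{-(V_+(x_0)+\delta)/\eps}\, \P\b{\pa{X}^\eps \in O}.
\end{equation*}

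Next, I would take $\eps \ln$ of both sides, pass to $\liminf_{\eps \to 0}$, and invoke the LDP lower bound on the open set $O$ to conclude
\begin{equation*}
\liminf_{\eps \to 0} \eps \ln \E\b{\e^{-V_\eps(\pa{X}^\eps)/\eps}} \geq -V_+(x_0) - \delta - \inf_O I \geq -V_+(x_0) - \delta - I(x_0).
\end{equation*}
Sending $\delta \to 0$ and then taking the infimum over $x_0 \in E$ would close the argument.

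The only mildly delicate point is the proper reading of the joint-$(\eps,y)$ $\limsup$ in~\eqref{eq:up_V}, which is exactly what allows one to extract a neighborhood of $x_0$ and an $\eps$-threshold \emph{simultaneously}. A purely pointwise-in-$y$ regularization of $\limsup_\eps V_\eps$ would not support the uniform truncation above; this asymmetry is precisely what separates this easy lower bound from the more delicate upper-bound companion. No further obstacle is anticipated.
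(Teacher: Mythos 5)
Your proof is correct and follows essentially the same approach as the paper's: extract from the joint $\limsup$ in~\eqref{eq:up_V} an open neighborhood on which $V_\eps$ is uniformly bounded above by $V_+(x_0)+\delta$ for small $\eps$, restrict the expectation to that neighborhood, and apply the LDP lower bound. The only cosmetic difference is that you prove the pointwise bound at an arbitrary $x_0$ and then take the infimum, whereas the paper first picks a near-minimizer $x_\delta$ of $I+V_+$ and centers the argument there; the two orderings are logically interchangeable.
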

\begin{proof}
Similar to the usual proof of Varadhan's lemma lower bound. \medskip

First note that by definition of the envelope $V_+$, for each state $x$ and each $\delta > 0$, one can find an open neighbourhood $U_{x,\delta}$ such that
$$
 \limsup_{\eps \to 0} \sup_{U_{x,\delta}} V_\eps \leq V_+(x) + \delta / 2.
$$

Let $\delta > 0$ be given, arbitrarily small. By definition of the infimum, there is a $x_\delta$ such that
$$
 I(x_\delta)+V_+(x_\delta) \leq \inf\b{I+V_+} + \delta/2. 
$$

We can then consider the lower bound
\begin{align*}
 \E\b{\e^{- \frac1\eps V_\eps}(\pa{X}^\eps)} & \geq \E\b{\one_{ U_{x_\delta,\delta}} \p{\pa{X}^\eps }\, \e^{- \frac1\eps V_\eps}(\pa{X}^\eps)} \\
 & \geq \P\b{\pa{X}^\eps \in U_{x_\delta,\delta}}\e^{- V_+(x_\delta)/ \eps - \delta / 2 \eps }, 
\end{align*}
and using the LDP lower bound
\begin{align*}
- \liminf_{\eps} \eps \ln \E\b{\e^{- \frac1\eps V_\eps}(\pa{X}^\eps)} & \leq \inf_{U_{x_\delta,\delta}} I + V_+(x_\delta) + \delta /2 \\
&  \leq I(x_\delta) + V_+(x_\delta) + \delta /2 \\
& \leq \inf\b{I+V_+} + \delta .
\end{align*}

\end{proof}

\begin{Lem}
\label{lem:var:upper}
 Assume $\p{\pa{X}^\eps}_{\eps > 0}$ satisfies a LDP with good rate function $I$ on a Polish state space. Let $\p{V_\eps}_{\eps > 0}$ be a family of measurable functions in $[0,+\infty]$. Define for each state $x$ the lower semi-continuous envelope of the latter by:
 \begin{equation}\label{eq:low_V}
 V_-(x) \eqdef \liminf_{\substack{\eps \to 0 \\ y \to x}} V_\eps(y) .
 \end{equation}
 Then
 $$
 -\limsup_{\eps \to 0} \eps \ln \E\b{ \e^{- \frac1\eps V_\eps}(\pa{X}^\eps)} \geq \inf\p{ I + V_- } .
 $$
\end{Lem}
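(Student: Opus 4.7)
The plan is to adapt the classical proof of the Varadhan upper bound, accounting for two additional difficulties: the potential $V_\eps$ depends on $\eps$, and only the lsc envelope $V_-$ controls it from below; and the fact that $V_\eps$ may be unbounded (possibly taking the value $+\infty$) forces us to rely on the goodness of $I$ to treat a suitable tail.

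First I would fix an arbitrary cutoff $M > 0$ and an arbitrary precision $\delta > 0$, and introduce the compact level set $K_M = \set{I \leq M}$, which is indeed compact since $I$ is assumed good. For every $x \in K_M$, two lower semi-continuity-type properties can be used to produce an open neighbourhood $U_x \ni x$ enjoying, for all $\eps$ small enough:
\begin{itemize}
\item a uniform lower bound on $V_\eps$ on $\overline{U_x}$, namely $V_\eps(y) \geq V_-(x) - \delta$ for $y \in \overline{U_x}$, obtained directly from the definition~\eqref{eq:low_V} of $V_-$ as the lsc envelope of the family $\p{V_\eps}_\eps$;
\item a lower bound on $I$ on $\overline{U_x}$, namely $I(y) \geq I(x) - \delta$, using lower semi-continuity of $I$ and shrinking $U_x$ if necessary.
\end{itemize}
Since $K_M$ is compact, I would extract a finite subcover $U_{x_1}, \ldots, U_{x_n}$ and then write $E = \bigcup_{i=1}^n U_{x_i} \cup C$ with $C \eqdef E \setminus \bigcup_i U_{x_i}$ closed.

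Next I would decompose
\[
\E\b{\e^{-\frac1\eps V_\eps(\pa{X}^\eps)}} \leq \sum_{i=1}^n \E\b{\one_{\overline{U_{x_i}}}(\pa{X}^\eps)\,\e^{-\frac1\eps V_\eps(\pa{X}^\eps)}} + \E\b{\one_{C}(\pa{X}^\eps)\,\e^{-\frac1\eps V_\eps(\pa{X}^\eps)}}.
\]
On each piece $\overline{U_{x_i}}$, the pointwise bound on $V_\eps$ gives
\[
\E\b{\one_{\overline{U_{x_i}}}(\pa{X}^\eps)\,\e^{-\frac1\eps V_\eps(\pa{X}^\eps)}} \leq \e^{-(V_-(x_i)-\delta)/\eps}\,\P\b{\pa{X}^\eps \in \overline{U_{x_i}}},
\]
and the LDP upper bound applied to the closed set $\overline{U_{x_i}}$ yields
\[
\limsup_{\eps \to 0} \eps \ln \E\b{\one_{\overline{U_{x_i}}}(\pa{X}^\eps)\,\e^{-\frac1\eps V_\eps(\pa{X}^\eps)}} \leq -(V_-(x_i) - \delta) - (I(x_i) - \delta) \leq -\inf\p{I + V_-} + 2\delta.
\]
For the tail piece, the key observation is that $C \cap K_M = \emptyset$ so $\inf_C I \geq M$ (by lsc of $I$ and compactness of $K_M$); combined with $V_\eps \geq 0$ and the LDP upper bound on the closed set $C$, this gives
\[
\limsup_{\eps \to 0} \eps \ln \E\b{\one_{C}(\pa{X}^\eps)\,\e^{-\frac1\eps V_\eps(\pa{X}^\eps)}} \leq -M.
\]

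Summing the finitely many contributions and using the elementary $\limsup \eps \ln(a_\eps + b_\eps) \leq \max(\limsup\eps \ln a_\eps, \limsup\eps \ln b_\eps)$, I obtain
\[
\limsup_{\eps \to 0} \eps \ln \E\b{\e^{-\frac1\eps V_\eps(\pa{X}^\eps)}} \leq \max\b{-\inf\p{I + V_-} + 2\delta,\,-M}.
\]
Letting $M \to +\infty$ and then $\delta \to 0$ concludes. The main obstacle is the tail: without the assumption $V_\eps \geq 0$, the crude replacement of $\e^{-V_\eps/\eps}$ by $1$ on $C$ would fail, and one would have to argue more delicately (for instance by assuming an exponential moment condition on $V_\eps$ as in the classical Varadhan lemma). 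The non-negativity hypothesis is precisely what makes the argument go through cleanly in our setting.
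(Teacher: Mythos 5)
Your proof is correct and follows essentially the same route as the paper: fix a precision $\delta$ and a cutoff (your $M$, the paper's $v_{\rm max}$), use goodness of $I$ to extract a finite subcover of the compact level set by neighbourhoods on which both $I$ and $V_\eps$ (for small $\eps$) are controlled from below via lower semi-continuity and the definition of $V_-$, apply the LDP upper bound to the closures, and dispose of the tail via $V_\eps \geq 0$ together with $\inf I \geq M$ off the cover. The paper's write-up is terser but every ingredient you list is there in the same role.
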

\begin{proof}

This is the classical proof of Varadhan upperbound based on the rate function goodness. \medskip

Let $\delta > 0$ be given, arbitrarily small. For each $x$, by lower semi-continuity of $I$ and definition of $V_-$, we can find an open neighborhood $U_{x,\delta}$ such that:
 $$
 \inf_{\overline{U_{x,\delta}}} I \geq I(x) - \delta/2 ,
 $$
 as well as
 $$
 \liminf_{\eps \to 0}\inf_{\overline{U_{x,\delta}}} V_\eps \geq V_-(x) - \delta/2 .
 $$
 
Using the compactness of the level sets of $I$, one can choose a finite covering $U_{x_i,\delta}$, $i = 1 \ldots I$ of the level set $\set{x \mid  I(x) \leq v_{\rm max}}$ with $v_{\rm max}$ arbitrarily large, and denote $U \eqdef \bigcup_{i=1 \ldots I} U_{x_i,\delta}$. Consider now the main upper bound:

\begin{align*}
 \E\b{\e^{- \frac1\eps V_\eps}(\pa{X}^\eps)} & \leq \sum_{i=1}^{I} \E\b{\one_{ \pa{X}^\eps \in U_{x_i,\delta} } \, \e^{- V_\eps(\pa{X}^\eps)/ \eps} } + \P\b{\pa{X}^\eps \in U^c} \\
 & \leq  \sum_{i=1}^{I} \P\b{ \pa{X}^\eps \in \overline{U_{x_i,\delta}}  } \, \e^{- \inf_{\overline{U}_{x_i,\delta}}\p{V_\eps}/ \eps}  + \P\b{\pa{X}^\eps \in U^c} .
\end{align*}
By construction of the neighbourhood $U_{x,\delta}$ and the LDP upper bound
$$
-\limsup_\eps \eps \ln \P\b{\pa{X}^\eps \in \overline{U_{x,\delta}}  } \, \e^{- \inf_{\overline{U}_{x,\delta}}\p{V_\eps}/ \eps} \geq I(x) + V_-(x) - \delta,
$$
so that 
\begin{align*}
 -\limsup_\eps \eps \ln \E\b{\e^{- \frac1\eps V_\eps}(\pa{X}^\eps)} &\geq
 \min \big ( V_-(x_1)+I(x_1), \ldots ,V_-(x_I)+I(x_I), \underbrace{\inf_{U^c} I}_{\geq v_{\rm max}}   \big ) - \delta \\
 &\geq \min(\inf \b{V_- + I},v_{\rm max}) - \delta .
 %\limsup_\eps \eps \ln \sum_{i=1}^I \P\b{\pa{X}^\eps \in U_{x_i,\delta}}\e^{-V_-(x_i)/\eps + \delta/2}.
\end{align*}

%  
%  
%  {\tiny 
%  
%  *** J'ai essayé d obtenir une preuve sans la goodness, pas réussit !!!! *********
%  
%  We first define the lower semi-continuous function
% $$
% V_{\eps_0,-}(x) \eqdef \liminf_{y \to x} \inf_{\eps \leq \eps_0} V_\eps(y),
% $$
% which by construction satisfies
% $$
% V_- = \sup_{\eps_0} V_{\eps_0,-},
% $$
% as well as 
% $$
% V_{\eps_0,-}(x)  \leq V_\eps(x) \qquad  \forall x, \eps \leq \eps_0 .
% $$
% 
% 
% Next, let $\delta, v_K> 0$ be given, $\delta$ aribtrarily small and $v_K$ arbitrary large, and choose a sequence $v_0 = 0 < v_1 < \ldots < v_K < + \infty$ with $v_k - v_{k-1} \leq \delta$ for all $k= 1 \ldots K$. \medskip
% 
% We can then focus on the main upper bound (for $\eps \leq \eps_0$):
% \begin{align*}
%  \E\b{\e^{- \frac1\eps V_\eps}(\pa{X}^\eps)} & \leq \sum_{k=1}^{K} \E\b{\one_{ V_\eps(\pa{X}^\eps)\in [v_{k-1},v_k[} \, \e^{- v_{k-1}/ \eps} }+ \e^{- v_K/\eps }\\
%  & \leq \sum_{k=1}^{K} \P\b{V_{\eps_0,-}(\pa{X}^\eps) \leq v_{k-1}} \e^{- v_{k-1}/ \eps}+ \e^{- v_K/\eps }, 
% \end{align*}
% 
%  }
\end{proof}

\subsection{A Large Deviations estimate} In order to analyze the (large sample size) variance of the AMS algorithm in the small noise regime, we will need precise estimates on the quantity $ \eps \ln \gamma^\eps_l(q^2_\eps)$ when $\eps \to 0$. The latter will yield the 'final' part $\loss_{\mrm{O}}(l)$ of the loss function. \medskip

% 
% \begin{Lem} Assume Assumptions~\Cref{ass:ldp,,ass:cont,,ass:time,,ass:noA} hold true. Then for all $l \in [\lmin, \lmax]$
% $$
%  -  \ol{w}(l) \leq  \liminf_{\eps \to 0} \eps \ln \gamma^\eps_l(q^2_\eps)
%  \leq \limsup_{\eps \to 0} \eps \ln \gamma^\eps_l(q^2_\eps) \leq - \ul{w}(l)
% $$ 
% \end{Lem}

The goal of the present section is to detail the proof of these results. The proof is based on the extended Varadhan lemmas of the previous section. \medskip

We start by defining the set $H_l(\pa{x})$
of excursions in $\set{\xi = z}$ before hitting the interior $\set{\xi > l}$:
\begin{Lem}\label{lem:lim_hit}Let $\pa{x} \in C([0,T],E)$ be given with $\tau_l^-(\pa{x}) < +\infty$ and $T$ arbitrarily large. Denote the closed set
 $$
 H_l(\pa{x}) \eqdef\set{\xi = l}  \cap \set{ \pa{x}_t, \,  t \in [\tau_l^-(\pa{x}), \tau_l^+(\pa{x})\wedge \tau_A^+(\pa{x}) \wedge T]}.
 $$
 Assume $\pa{x}^n \to_{n} \pa{x}$ in $C([0,+\infty[,E)$ for the uniform topology on bounded time intervals and that $H_l(\pa{x}) \neq\emptyset$. Let $t^n$, % \leq \tau_l^+(\pa{x}^n) \wedge \tau_A^+(\pa{x}^n) \wedge T$
 $n\geq 1$, denotes a sequence such that for all $n\geq 1$:
 $$
 t_n \in [\tau_l^-(\pa{x}^n),\tau_l^+(\pa{x}^n) \wedge \tau_A^+(\pa{x}^n) \wedge T] \,\, \text{and} \,\, \pa{x}^n_{t^n} \in \set{\xi = l}.
 $$ 
 Then up to extraction of a sub-sequence
 $$
 \lim_n \pa{x}^n_{t^n} \in H_l(\pa{x}).
 $$
 
%  with $\tau_l^+(\pa{x}) < +\infty$, then
%  $$
%  \lim_{n \to + \infty} \overline{\bigcup_{m \geq n} \set{\pa{x}^m_{ \tau_{l}(\pa{x}^m) } }} \subset H_l(\pa{x}),
%  $$
%  that is $H_l(\pa{x})$ contains all the limits of hitting places of $\set{\xi = l}$ by a converging sequence of paths.
\end{Lem}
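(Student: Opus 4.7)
The plan is to combine compactness of $[0,T]$ with the semi-continuity properties of hitting times provided by Lemma~\ref{lem:lsc}, together with uniform convergence of $\pa{x}^n$ to $\pa{x}$ on bounded time intervals. The argument is essentially routine once these ingredients are in place; no substantial obstacle is anticipated.

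Step one: since $t^n \in [0,T]$, by Bolzano--Weierstrass extract a subsequence (still denoted $t^n$) converging to some $t^\ast \in [0,T]$. Step two: by the triangle inequality,
$$
d(\pa{x}^n_{t^n}, \pa{x}_{t^\ast}) \leq d(\pa{x}^n_{t^n}, \pa{x}_{t^n}) + d(\pa{x}_{t^n}, \pa{x}_{t^\ast}),
$$
where the first term vanishes by uniform convergence of $\pa{x}^n \to \pa{x}$ on $[0,T]$, and the second by continuity of $\pa{x}$. Hence $\pa{x}^n_{t^n} \to \pa{x}_{t^\ast}$. Step three: since $\xi$ is continuous and $\xi(\pa{x}^n_{t^n}) = l$ for every $n$, passing to the limit gives $\xi(\pa{x}_{t^\ast}) = l$, so $\pa{x}_{t^\ast} \in \set{\xi=l}$.

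Step four (the only step requiring Lemma~\ref{lem:lsc}): one must show that $t^\ast$ lies in the closed interval $[\tau_l^-(\pa{x}),\tau_l^+(\pa{x}) \wedge \tau_A^+(\pa{x}) \wedge T]$. For the lower bound, lower semi-continuity of $\pa{y} \mapsto \tau_l^-(\pa{y})$ together with $t^n \geq \tau_l^-(\pa{x}^n)$ gives
$$
\tau_l^-(\pa{x}) \leq \liminf_n \tau_l^-(\pa{x}^n) \leq \liminf_n t^n = t^\ast.
$$
For the upper bound, using $t^n \leq \tau_l^+(\pa{x}^n) \wedge \tau_A^+(\pa{x}^n) \wedge T$, upper semi-continuity of $\tau_l^+$ and $\tau_A^+$, and the elementary inequality $\limsup_n (a_n \wedge b_n) \leq (\limsup_n a_n) \wedge (\limsup_n b_n)$, one gets
$$
t^\ast = \limsup_n t^n \leq \tau_l^+(\pa{x}) \wedge \tau_A^+(\pa{x}) \wedge T.
$$
Combining these inclusions yields $\pa{x}_{t^\ast} \in H_l(\pa{x})$, which is exactly the claimed conclusion.

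The only point that requires minor care is the combined upper semi-continuity bound (treating simultaneously $\tau_l^+$, $\tau_A^+$ and the deterministic cap $T$); the assumption $H_l(\pa{x}) \neq \emptyset$ is used implicitly here to guarantee that the interval on the right-hand side is non-empty so that the conclusion is meaningful.
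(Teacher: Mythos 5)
Your proof is correct and follows essentially the same route as the paper's: extract a convergent subsequence of $(t^n)$ by compactness of $[0,T]$, pass to the limit in $\pa{x}^n_{t^n}$ via uniform convergence, use continuity of $\xi$ to keep the limit in $\set{\xi=l}$, and sandwich $t^\ast$ between $\tau_l^-(\pa{x})$ and $\tau_l^+(\pa{x})\wedge\tau_A^+(\pa{x})\wedge T$ by semi-continuity. The only cosmetic difference is that you invoke Lemma~\ref{lem:lsc} directly for the semi-continuity bounds, while the paper re-derives those bounds inline by contradiction; the mathematical content is identical.
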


\begin{proof}
Let us denote $(y,s)=\lim_n (\pa{x}^n_{t^n},t^n)$, which always exists up to extraction by a compacity argument. We need to prove that $s\geq \tau_l^-(\pa{x})$, $s \leq \tau_l^+(\pa{x})\wedge \tau_A^+(\pa{x})\wedge T$ and $\pa{x}_s=y$ with $\xi(y)=l$ in order to complete the proof.\medskip

Trivially, $s \leq T$. Assume $\tau_l^+(\pa{x})\wedge \tau_A^+(\pa{x}) < T$. By definition, $\pa{x}^n$ which converges to $\pa{x}$ hits $\set{\xi > l} \cup \mathring{A}$ before $\tau_l^+(\pa{x})\wedge \tau_A^+(\pa{x})+\delta$ for all $n$ large enough and $\delta$ arbitrary small; hence $s \leq \tau_z^+(\pa{x})\wedge \tau_A^+(\pa{x}) $.\medskip

Similarly, assume $s< \tau_l^-(\pa{x})$. Then we can find a small $\delta$ and an infinite number of $\pa{x}^n$ such that $\tau_l^-(\pa{x}^n)<\tau_l^-(\pa{x})-\delta$, which contradicts the uniform convergence. \medskip

Finally, $\pa{x}^n_{t_n}$ converges towards $\pa{x}_{s}$ by uniform convergence, and since $\xi(\pa{x}^n_{t^n})=l$, $y$ also belongs to $\set{ \xi = l}$ by continuity of $\xi$.
\end{proof}

% {\tiny
%  In the end, $(y,s) \in H_z(x)$.
%  
%  
%  
% We start by defining the set of excursions in $\set{\xi = l}$ before hitting the interior $\set{\xi < l}$:
% \begin{Lem}
%  Denote the closed set
%  $$
%  H_z(\pa{x}) \eqdef \set{\xi = l} \cap \set{\pa{x}_t \mid t \in [\tau_l^-(\pa{x}),\tau_l^+(\pa{x})] } 
%  $$
%  of excursions in $\set{\xi = z}$ before hitting the interior. Then:
%  $$
%  \set{\lim_n (x^n_{\tau_{z^n}(x^n)},\tau_{z^n}(x^n)) |  x^n \to x, z^n \to z } \subset H_s(x),
%  $$
%  that is $H_z(x)$ contains the set of all possible approximate hitting time and place of $\set{\xi =z}$.
% \end{Lem}
% 
% \begin{proof}
% Let $(y,s)=\lim_n (x^n_{\tau_{z^n}(x^n)},\tau_{z^n}(x^n))$ (the latter always exists up to extraction by a compacity argument). $x^n_{\tau_{z^n}(x^n)}$ and thus its limit $x_s=y$ belong to $\set{ \xi = z}$ by continuity. Moreover $\tau_z^+(x) \geq s$, otherwise $x^n$ which converges to $x$ would absurdly hit ${\xi > z}$ before $( s + \tau_z^+(x) )/2$ for all $n$ large enough. In the end, $(y,s) \in H_z(x)$.
% \end{proof}
% }

One can then consider the (measurable) potential function defined on continuous trajectories restricted to $[0,T]$:

\begin{equation}
 V^T_{l,\eps}(\pa{x}) \eqdef - \eps \ln \b{ \one_{\tau_{l}(\pa{x}) < T \wedge \tau_A(\pa{x})} \,  q^\eps \p{ \pa{x}_{\tau_{l}(\pa{x})} } },
 %- \eps \ln \b{ \one_{\tau_{l}(\pa{x}) < T \wedge \tau_{A}(\pa{x})} \P_{\pa{x}_{\tau_{l}(\pa{x})}}(\tau_{\lmax}(\pa{X}^\eps) < \tau_A(\pa{X}^\eps)) },
\end{equation}
with the convention $\ln 0 = - \infty$. We will denote by $V_{l,\eps}(\pa{x}) \eqdef \lim_{T \to +\infty} V^{T}_{l,\eps}(\pa{x})$ the trivial extension to the time interval $\R_+$. \medskip

Before applying the extended Varadhan's lemmas, we need to estimate the lower and upper semi-continuous envelopes of $V_{l,\eps}$. For this purpose, we denote by 
$$
\pa{x}^{T \wedge  \tau_A^+}
$$
the restriction of $\pa{x}$ to the time interval $[0,T \wedge  \tau_A^+(\pa{x})]$ and define
\[
 \ul{V}^{T}_{l}(\pa{x}) \eqdef 
\begin{cases}
    & \inf_{ x \in H_l\big( \pa{x}^{T \wedge  \tau_A^+} \big) } U(x,\set{\xi = \lmax}), \qquad \text{if } \tau_l^-(\pa{x}) \leq T \wedge  \tau_A^+(\pa{x}), \\ 
    & + \infty, \qquad \text{else},
\end{cases}
\]
as well as 
\[
 \ol{V}^T_{l}(\pa{x}) \eqdef 
\begin{cases}
    & \sup_{ x \in H_l(\pa{x})} U(x,\set{\xi = \lmax}), \qquad \text{if } \tau_l^+(\pa{x}) < T \wedge \tau_A^-(\pa{x}) , \\ 
    & + \infty, \qquad \text{else},
\end{cases}
\]

and denote by $V_{l,\pm}(\pa{x}) \eqdef \lim_{T \to + \infty }V^{T}_{l,\pm}(\pa{x})$ their natural extensions on $\R_+$. 
% 
% 
% \OLD{
% We remark that the latter can be estimated using the rate function and extension of the considered trajectory as follows.
% \begin{Lem} Let $\pa{x}$ be a trajectory such that $\tau^-_l(\pa{x}) < +\infty$. A continuous trajectory $\pa{x}^{(l)}$ is called a \emph{$l$-modification} if and only if $\pa{x}^{(l)}_t=\pa{x}_t$ for all $t \leq \tau^-_l(\pa{x})$. \medskip
% 
% For all $\delta > 0$, there exists a $l$-modification with $\tau_{\lmax}(\pa{x}^{(l)}) < +\infty$ such that
% $$
% V_{l,-}(\pa{x}) \geq I_{[\tau^+_l(\pa{x}^{(l)}),\tau_{\lmax}(\pa{x}^{(l)})]}(\pa{x}^{(l)}) - \delta
% $$
% 
% For any $l$-modification with $\tau_{\lmax}(\pa{x}^{(l)}) < +\infty$ it holds
% $$
% V_{l,+}(\pa{x}) \leq I_{[\tau^-_l(\pa{x}^{(l)}),\tau_{\lmax}(\pa{x}^{(l)})]}(\pa{x}^{(l)}) 
% $$
% \end{Lem}
% }

\begin{Lem} 
\label{lem:V}
Let $\pa{x}$ denotes a path in $C([0,T],E)$. Under~\Cref{ass:ldp,,ass:cont,,ass:time,,ass:boundA}, one has:
 \begin{align*}
  &\limsup_{\substack{\pa{x}^n \to \pa{x} \\ \eps \to 0}} V^T_{l,\eps}(\pa{x}^n) \leq \ol{V}^T_{l}(\pa{x}). %\inf_{\tau_1^-(x) \leq \tau_A^+(x) \wedge T } I[x] .
 \end{align*} 
as well as
 \begin{align*}
  &\liminf_{\substack{\pa{x}^n \to \pa{x} \\ \eps \to 0}} V^T_{l,\eps}(\pa{x}^n) \geq \ul{V}^{T}_{l}(\pa{x}) . %\inf_{\tau_1^-(x) \leq \tau_A^+(x) \wedge T } I[x] .
 \end{align*} 
\end{Lem}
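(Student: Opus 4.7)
The plan is to prove the two inequalities separately by i) controlling, via the semi-continuity properties of hitting times (Lemma~\ref{lem:lsc}), when the indicator in the definition of $V^T_{l,\eps}$ vanishes, and ii) identifying the limit point of $\pa{x}^n_{\tau_l(\pa{x}^n)}$ through Lemma~\ref{lem:lim_hit}, so that Lemma~\ref{lem:unif_proba} applied to $q^\eps_{\lmax}$ at this point finishes the computation.

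For the upper bound $\limsup V^T_{l,\eps}(\pa{x}^n) \leq \ol V^T_l(\pa{x})$, the case $\tau_l^+(\pa{x}) \geq T \wedge \tau_A^-(\pa{x})$ is trivial because $\ol V^T_l(\pa{x}) = +\infty$. In the remaining case $\tau_l^+(\pa{x}) < T \wedge \tau_A^-(\pa{x})$, the open character of this condition (Lemma~\ref{lem:lsc}) gives, for $n$ large, $\tau_l(\pa{x}^n) \leq \tau_l^+(\pa{x}^n) < T \wedge \tau_A^-(\pa{x}^n) \leq T \wedge \tau_A(\pa{x}^n)$, hence $V^T_{l,\eps}(\pa{x}^n) = -\eps \log q^\eps(\pa{x}^n_{\tau_l(\pa{x}^n)})$. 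After extracting a subsequence with $\pa{x}^n_{\tau_l(\pa{x}^n)} \to y$ (possible since $E$ is locally compact around $\pa{x}_{\tau_l^-(\pa{x})}$, or by restriction to a compact neighbourhood), the times $t_n = \tau_l(\pa{x}^n)$ lie in the interval required by Lemma~\ref{lem:lim_hit} and $\xi(\pa{x}^n_{t_n}) = l$ by continuity of $\xi$, so Lemma~\ref{lem:lim_hit} places $y$ in $H_l(\pa{x})$. Then Lemma~\ref{lem:unif_proba} specialized to the target level $\lmax$ and initial condition $y$ yields $\lim -\eps \log q^\eps(\pa{x}^n_{\tau_l(\pa{x}^n)}) = U(y,\set{\xi = \lmax}) \leq \sup_{H_l(\pa{x})} U(\, .\, , \set{\xi = \lmax}) = \ol V^T_l(\pa{x})$.

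For the lower bound $\liminf V^T_{l,\eps}(\pa{x}^n) \geq \ul V^T_l(\pa{x})$ I would argue symmetrically. If $\tau_l^-(\pa{x}) > T \wedge \tau_A^+(\pa{x})$, then $\ul V^T_l(\pa{x}) = +\infty$, and the lower semi-continuity of $\tau_l^-$ together with the upper semi-continuity of $\tau_A^+$ force $\tau_l(\pa{x}^n) \geq \tau_l^-(\pa{x}^n) > T \wedge \tau_A^+(\pa{x}^n) \geq T \wedge \tau_A(\pa{x}^n)$ eventually, hence $V^T_{l,\eps}(\pa{x}^n) = +\infty$. Otherwise, pick a subsequence realizing the $\liminf$; if it equals $+\infty$ there is nothing to prove, else extract further so that $\tau_l(\pa{x}^n) < T \wedge \tau_A(\pa{x}^n)$ and $\pa{x}^n_{\tau_l(\pa{x}^n)} \to y$. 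Lemma~\ref{lem:lim_hit} again places $y$ in $H_l(\pa{x})$, and Lemma~\ref{lem:unif_proba} provides $\liminf V^T_{l,\eps}(\pa{x}^n) = U(y,\set{\xi = \lmax}) \geq \inf_{H_l(\pa{x})} U(\, .\, , \set{\xi = \lmax}) = \ul V^T_l(\pa{x})$.

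The main obstacle lies in the localization of the limit $y$ of $\pa{x}^n_{\tau_l(\pa{x}^n)}$ inside $H_l(\pa{x})$: this is delicate because $\tau_l$ is only semi-continuous in $\pa{x}$ and the set $H_l(\pa{x})$ implicitly restricts to times before both $\tau_l^+(\pa{x})$ and $\tau_A^+(\pa{x})$, which may not be consistent with the naive limit of $\tau_l(\pa{x}^n)$. Lemma~\ref{lem:lim_hit} is precisely designed to handle this; once it is invoked, together with the open/closed character of the conditioning sets provided by Lemma~\ref{lem:lsc}, the bound reduces to a direct application of Lemma~\ref{lem:unif_proba} at an arbitrary initial condition.
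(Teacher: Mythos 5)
Your argument follows exactly the same route as the paper's (quite terse) proof: use Lemma~\ref{lem:lsc} to determine when the indicator in $V^T_{l,\eps}$ is $0$ or $1$ along the sequence, then invoke Lemma~\ref{lem:lim_hit} to place the limit of $\pa{x}^n_{\tau_l(\pa{x}^n)}$ in $H_l(\pa{x})$, and finish with the uniform limit of $q^\eps$ from Lemma~\ref{lem:unif_proba}. The case analysis is correct and you have simply filled in the details that the paper leaves implicit.

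One small remark: the extraction of a convergent subsequence of $\pa{x}^n_{\tau_l(\pa{x}^n)}$ does not really need local compactness of $E$ around $\pa{x}_{\tau_l^-(\pa{x})}$; what makes the extraction possible is that $\{\pa{x}^n_t : t\in[0,T],\ n\in\N\}\cup\{\pa{x}_t : t\in[0,T]\}$ is compact (a uniformly convergent sequence of continuous paths on a compact interval has precompact range), and the time indices $t^n$ live in $[0,T]$. This is precisely the ``compacity argument'' already packaged inside Lemma~\ref{lem:lim_hit}, so you could drop the local compactness remark and simply cite that lemma for the extraction.
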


\begin{proof} 
Let $\pa{x}^n \to_n \pa{x}$ be a uniformly convergent sequence on $[0,T]$.
\paragraph{Upper bound}
The condition $\tau_l^+(\pa{x}) < \tau_A^-(\pa{x}) \wedge T $ defines an open subset of $C([0,T],E)$ according to Lemma~\ref{lem:lsc}, thus there is a $n_0$ above which the sequence satisfies $\tau_l(\pa{x}^n) < T \wedge \tau_A^-(\pa{x}) $. We can then use Lemma~\ref{lem:lim_hit}, together with Lemma~\ref{lem:unif_proba} giving the uniform limit of $q^\eps$, to obtain the claimed upper bound.
\paragraph{Lower bound}The condition $\tau_l^-(\pa{x}) > T \wedge \tau_A^+(\pa{x})$, defines again an open set according to Lemma~\ref{lem:lsc}, thus there is a $n_0$ above which the sequence satisfies $\tau_l(\pa{x}^n) > T$ and thus $\ul{V}^{T}_{l}(\pa{x}) = +\infty$. Otherwise, if $\pa{x}$ is outside that open set,  we use again Lemma~\ref{lem:lim_hit} , together with Lemma~\ref{lem:unif_proba} giving the uniform limit of $q^\eps$, to obtain the claimed lower bound.
\end{proof}

We can now proceed and estimate  $\limsup / \liminf_{\eps} \eps \ln  \gamma^\eps_l(q^2_\eps)$ using the extended Varadhan lemmas.

\begin{Lem}\label{lem:up} Let~\Cref{ass:ldp,,ass:cont,,ass:time,,ass:boundA} hold true. Then one has
 \begin{align*}
  &- \limsup_{\eps \to 0} \eps \ln \gamma^\eps_l(q^2_\eps) \\
  & \quad \geq \inf_{\pa{x}:  \, \tau^-_{\lmax}(\pa{x}) < \tau_A^+(\pa{x})} \, I_{[0,\tau_l^+(\pa{x})]}[\pa{x}] + 2 I_{[\tau_l^+(\pa{x}),\tau^-_{\lmax}(\pa{x})]}(\pa{x}) \p{\eqdef \ol{w}(l) }  . %\inf_{H_l(\pa{x})} U( \, . \, , \set{\xi = l_{\lmax}} ) } .
    %\substack{ T > 0 \\ \tau_z^-(x) \leq \tau_A^+(x) \wedge T}
 \end{align*} 
\end{Lem}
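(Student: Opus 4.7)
The key identity is
\[
\gamma^\eps_l(q^2_\eps) = \E_{x_0}\b{\one_{\tau_l(\pa{X}^\eps) < \tau_A(\pa{X}^\eps)} q^\eps_{\lmax}\p{\pa{X}^\eps_{\tau_l(\pa{X}^\eps)}}^2} = \E_{x_0}\b{\e^{-2V_{l,\eps}(\pa{X}^\eps)/\eps}},
\]
which brings the statement within reach of the extended upper-bound Varadhan lemma proved above (Lemma~\ref{lem:var:upper}). The plan is to truncate in time, apply this Varadhan upper bound with the potential $2V_{l,\eps}^T$, use the semicontinuity bound of Lemma~\ref{lem:V} for its lower envelope, then rearrange the resulting variational problem into the claimed form $\ol{w}(l)$.

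\paragraph{Step 1: Time truncation.} Fix an arbitrarily large $T$. Decompose
\[
\gamma^\eps_l(q^2_\eps) \leq \E_{x_0}\b{\e^{-2V^T_{l,\eps}(\pa{X}^\eps)/\eps}} + \P_{x_0}\b{\tau_A^+(\pa{X}^\eps) > T}.
\]
By Assumption~\ref{ass:time}, $\eps\ln \P_{x_0}\b{\tau^+_A(\pa{X}^\eps) > T} \leq - C_T$ with $C_T \to +\infty$ as $T \to +\infty$, so this remainder will disappear in the end.

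\paragraph{Step 2: Varadhan upper bound.} By Lemma~\ref{lem:V}, the lower semi-continuous envelope $(V^T_{l,\eps})_-$ is bounded below by $\ul{V}^T_{l}$, and Assumption~\ref{ass:ldp} gives the LDP for $\pa{X}^\eps$ in $C([0,T],E)$ with good rate $I_{[0,T]}$. Lemma~\ref{lem:var:upper}, applied to $2V^T_{l,\eps}$, yields
\[
-\limsup_{\eps \to 0} \eps \ln \E_{x_0}\b{\e^{-2V^T_{l,\eps}(\pa{X}^\eps)/\eps}} \,\geq\, \inf_{\substack{\pa{x} \in C([0,T],E) \\ \pa{x}_0 = x_0}} \set{ I_{[0,T]}(\pa{x}) + 2\,\ul{V}^T_{l}(\pa{x})}.
\]
Combining with Step~1, the left-hand side of the lemma is bounded below by $\min$ of this infimum and $C_T$.

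\paragraph{Step 3: Simplifying the infimum.} By definition of $\ul{V}^T_l$, only paths $\pa{x}$ with $\pa{x}_0 = x_0$ and $\tau_l^-(\pa{x}) \leq T \wedge \tau_A^+(\pa{x})$ contribute, and on those paths $\ul{V}^T_l(\pa{x}) = \inf_{y \in H_l(\pa{x}^{T \wedge \tau_A^+})} U(y, \set{\xi=\lmax})$. Since the cost $I_{[0,T]}$ is additive (cf.~\eqref{eq:add}) and since we may always truncate a minimizing path at $\tau_l^+(\pa{x})$ without increasing its cost, the infimum reduces (up to arbitrarily small $\delta$) to
\[
\inf_{\substack{\pa{x}:\, \pa{x}_0 = x_0,\\ \tau_l^+(\pa{x}) < \tau_A^+(\pa{x})}} \b{ I_{[0,\tau_l^+(\pa{x})]}(\pa{x}) + 2\, U\p{\pa{x}_{\tau_l^+(\pa{x})}, \set{\xi = \lmax}}}.
\]
Here Assumption~\ref{ass:cont} allows us to realise any trajectory whose minimum-cost representative hits $\{\xi = l\}$ tangentially, so that $H_l$ reduces to the single point $\pa{x}_{\tau_l^+(\pa{x})}$.

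\paragraph{Step 4: Concatenation into $\ol{w}(l)$.} For each admissible $y \in \set{\xi = l}$, the quasi-potential $U(y, \set{\xi=\lmax})$ is, by Assumption~\ref{ass:boundA} and definition, the infimum of $I_{[0,S]}(\pa{y})$ over paths with $\pa{y}_0 = y$, $\pa{y}_S \in \set{\xi = \lmax}$, avoiding $A$. Concatenating such a near-minimizer to $\pa{x}$ at time $\tau_l^+(\pa{x})$ and invoking~\eqref{eq:add} yields, for every $\delta>0$, a path $\pa{x}^\delta$ starting at $x_0$ with $\tau^-_{\lmax}(\pa{x}^\delta) < \tau^+_A(\pa{x}^\delta)$ such that
\[
I_{[0,\tau_l^+(\pa{x})]}(\pa{x}) + 2\,U\p{\pa{x}_{\tau_l^+(\pa{x})}, \set{\xi = \lmax}} \geq I_{[0,\tau_l^+(\pa{x}^\delta)]}(\pa{x}^\delta) + 2\, I_{[\tau_l^+(\pa{x}^\delta), \tau^-_{\lmax}(\pa{x}^\delta)]}(\pa{x}^\delta) - \delta,
\]
which, upon optimising over $\pa{x}$ and letting $\delta \to 0$, gives $\ol{w}(l)$. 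Finally, sending $T \to +\infty$ in the min of Step~2 and Step~1 removes the competing $C_T$ and concludes.

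\paragraph{Main obstacle.} The serious technical work is Step~3–4: one must verify that (i) the excursion set $H_l$ in the envelope $\ul{V}^T_l$ degenerates to a single tangential hitting point along a minimising sequence, and (ii) the continuation used to realise $U(\pa{x}_{\tau_l^+(\pa{x})}, \set{\xi=\lmax})$ can be chosen to avoid $A$ and to land in $\set{\xi=\lmax}$ without colliding with $\set{\xi \geq \lmax}$ before $\tau^-_{\lmax}$, so that the additivity~\eqref{eq:add} applies cleanly. Both points rest on Assumption~\ref{ass:cont} (absence of local cost to probe higher levels) and Assumption~\ref{ass:boundA} (the $\mathring A$/$\overline{A}$ equivalence), together with the semi-continuity content of Lemma~\ref{lem:lim_hit}.
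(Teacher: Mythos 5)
Your proposal is correct and follows essentially the same route as the paper: time truncation against Assumption~\ref{ass:time}, the extended Varadhan upper bound (Lemma~\ref{lem:var:upper}) fed with the lower envelope from Lemma~\ref{lem:V}, then identification of an approximate minimiser inside the excursion set $H_l$, followed by concatenation and additivity of the rate function to reach $\ol{w}(l)$. One small descriptive slip in Step~3: one should truncate the near-minimising path at the near-argmin time $t^\delta_\ast \in [\tau_l^-(\pa{x}),\tau_l^+(\pa{x})\wedge \tau_A^+(\pa{x})\wedge T]$ inside $H_l$ (not at $\tau_l^+(\pa{x})$ of the original path), and the $\tau_l^+$ appearing in the final variational problem refers to the concatenated path; you flag exactly this subtlety in your ``Main obstacle'' paragraph, so it is a presentational inaccuracy rather than a gap.
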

\begin{proof}  Assume $T > 0$ given, arbitrary large. 
By definition of $ \gamma^\eps$, and using $ q_\eps\leq 1$, we get the upper bound
\begin{align*}
\gamma^\eps_l(q^2_\eps) & = \E\b{\one_{\tau_l(\pa{X}^\eps) < \tau_A(\pa{X}^\eps)} q_\eps^2\p{ \pa{X}^{\eps}_{\tau_l(\pa{X}^\eps)}) }} \\
 & \leq \E\b{\one_{\tau_l(\pa{X}^\eps) < T \wedge \tau_A(\pa{X}^\eps) } q_\eps^2\p{\pa{X}^{\eps}_{\tau_l(\pa{X}^\eps)}} } + \P\b{\tau_A(\pa{X}^{\eps}) \geq T } \\
 & = \E\b{\e^{-\frac2\eps V_{l,\eps}(\pa{X}^\eps)}} + \P\b{\tau_A(\pa{X}^{\eps,T}) \geq T } .
\end{align*}
Using Varadhan's upper bound (Lemma~\ref{lem:var:upper}) and Lemma~\ref{lem:V}, 
with~\Cref{ass:time}, we get
\begin{align*}
  - \limsup_{\eps \to 0} \eps \ln \gamma^\eps_l(q^2_\eps) \geq \min \b{\inf_{\pa{x} %\substack{ T > 0 \\ \tau_z^-(x) \leq \tau_A^+(x) \wedge T}
  } I_{[0,T]}[\pa{x}] + 2\ul{V}^{T}_{l}[\pa{x}] , C_T},
\end{align*}
where $\lim_{T \to + \infty} C_T = + \infty$. \medskip

Let $\delta > 0$ be given. Without loss of generality one can assume that the lower bound above is finite. As a consequence there exists $\pa{x}^\delta \in C([0,T],E)$ such that 
$$
\inf I_{[0,T]} + 2 \ul{V}^{T}_{l} \geq I_{[0,T]}[\pa{x}^\delta] + 2 \ul{V}^{T}_{l}[\pa{x}^\delta] - \delta.
$$
By definition of $\ul{V}_{l}^T$, there exists $$ t^\delta_\ast \in [\tau^-_l(\pa{x}^\delta) , \tau^+_l(\pa{x}^\delta) \wedge \tau_A^{+}(\pa{x}^\delta) \wedge T ] $$
such that $\pa{x}^\delta_{t^\delta_\ast} \in \set{ \xi = l}$ and
$$
\ul{V}^{T}_{l}[\pa{x}^\delta] \geq U(\pa{x}^\delta_{t^\delta_\ast},\set{\xi=\lmax}) - \delta .
$$
By additivity of the rate function, one also have $I_{[0,T]}[\pa{x}^\delta] \geq I_{[0,t^\delta_\ast]}[\pa{x}^\delta]$.

We can pick a $\pa{x}^\delta,t^\delta_\ast,T^\delta$ with $\tau^-_l(\pa{x}^\delta)\leq t^\delta_\ast < T^\delta <+\infty$ such that for any $T \geq T^\delta$ it holds
$$
\inf I_{[0,T]} + 2\ul{V}^{T}_{l} \geq I_{[0,t^\delta_\ast]}[\pa{x}^\delta] + 2U(\pa{x}^\delta_{t^\delta_\ast},\set{\xi=\lmax}) - \delta.
$$
We can now modify $\pa{x}^\delta$, and extend it after $t^\delta_\ast$ by a trajectory $\delta$-close to the optimal trajectories defining the cost $U^{\overline{A}}(\pa{x}^\delta_{t^\delta_\ast},\set{\xi=\lmax})$, that is
$$
U(\pa{x}^\delta_{t^\delta_\ast},\set{\xi=\lmax}) \geq I_{[t^\delta_\ast,\tau_{\lmax}^-(\pa{x}^\delta)]}[\pa{x}^\delta] - \delta,
$$
and such that $\tau_A^+(\pa{x}^\delta) > \tau_{\lmax}^-(\pa{x}^\delta)$. Finally, one gets

\begin{align*}
\inf I_{[0,T]} + 2\ul{V}^{T}_{l} &\geq I_{[0,t^\delta_\ast]}[\pa{x}^\delta] + 2I_{[t^\delta_\ast,\tau_{\lmax}(\pa{x}^\delta)]}[\pa{x}^\delta] - 3\delta \\
&\geq I_{[0,\tau_l^+(\pa{x}^\delta)]}[\pa{x}^\delta] + 2I_{[\tau_l^+(\pa{x}^\delta),\tau_{\lmax}(\pa{x}^\delta)]}[\pa{x}^\delta] - 3\delta \\
&\geq \inf_{\pa{x}:  \, \tau^-_{\lmax}(\pa{x}) < \tau_A^+(\pa{x})} \, I_{[0,\tau_l^+(\pa{x})]}[\pa{x}] + 2 I_{[\tau_l^+(\pa{x}),\tau^-_{\lmax}(\pa{x})]}(\pa{x}) - 3 \delta,
\end{align*}
where in the second line of the above one has used again the additivity of the rate functions. The fact that $\delta$ small and $T$ large are arbitrary yields the result.
\end{proof}

% 
% 
% \OLD{
% We finally remark that $\ul{V}^{T}_{l}(\pa{x})$ depends on the trajectory $\pa{x}_{t}$ for $t \leq \tau_{l}^+(\pa{X})$ only.  We can thus single out in the infimum the trajectories for which $I_{[0,T]}[\pa{x}] = I_{[0,\tau_{l}^+(\pa{X})]}[\pa{x}]$. Taking $T \to +\infty$ yields the result.}

\begin{Lem}\label{lem:down}
 Let~\Cref{ass:ldp,,ass:cont,,ass:time,,ass:boundA} hold true. Then one has
 \begin{align*}
  &- \limsup_{\eps \to 0} \eps \ln \gamma^\eps_l(q^2_\eps) \\
  & \quad \leq \inf_{\pa{x}:  \, \tau_{\lmax}^-(\pa{x}) < \tau_{A}^-(\pa{x}) } \, I_{[0,\tau_l^-(\pa{x})]}[\pa{x}] + 2 I_{[\tau_l^-(\pa{x}),\tau^-_{\lmax}(\pa{x})]}(\pa{x}) \p{\eqdef \ol{w}(l) }  . \\
%   & \OLD{
%    \inf_{\pa{x}:  \, \tau_l^+(\pa{x}) < +\infty} \, I[\pa{x}] + 2 V_{l,+}(\pa{x})
%    }
  %\inf_{\pa{x}: \, \tau_A^+(\pa{x}) < +\infty} \b{ I_{[0,\tau_{l}^+(\pa{X})]}[\pa{x}] + 2 \sup_{H_l(\pa{x})} U( \, . \, , \set{\xi = l_{\lmax}} ) } .%\inf_{\substack{ T >0 \\ \pa{x}\in C([0,T],E) \\ \tau_A^+(\pa{x}) < T}} I_{[0,T]}[\pa{x}] + 2\ol{V}^T_{l}[\pa{x}] .
 \end{align*} 
\end{Lem}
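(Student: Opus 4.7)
The plan is to mirror the proof of Lemma~\ref{lem:up} but to invoke the Varadhan-type lower bound (Lemma~\ref{lem:var:lower}) in place of the upper one. I will in fact establish the slightly stronger inequality with $\liminf$ in place of $\limsup$, from which the stated bound follows automatically since $\liminf \leq \limsup$. Fix an arbitrarily large horizon $T > 0$. From
\begin{equation*}
 \gamma^\eps_l(q^2_\eps) \geq \E\b{\one_{\tau_l(\pa{X}^\eps) < T \wedge \tau_A(\pa{X}^\eps)} \, q^2_\eps\p{\pa{X}^\eps_{\tau_l}}} = \E\b{\e^{-2V^T_{l,\eps}(\pa{X}^\eps)/\eps}} ,
\end{equation*}
Lemma~\ref{lem:var:lower} applied to $2 V^T_{l,\eps}$, combined with the upper-envelope estimate $V^T_{l,+} \leq \ol{V}^T_l$ provided by Lemma~\ref{lem:V}, yields
\begin{equation*}
 -\liminf_{\eps \to 0} \eps \ln \gamma^\eps_l(q^2_\eps) \leq \inf_{\pa{x} \in C([0,T],E)} \p{I_{[0,T]}[\pa{x}] + 2 \ol{V}^T_l(\pa{x})} .
\end{equation*}

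For any $\delta > 0$, the definition of $\ol{w}(l)$ provides a path $\pa{x}^\delta$ with $\tau_{\lmax}^-(\pa{x}^\delta) < \tau_A^-(\pa{x}^\delta)$ and $I_{[0,\tau_l^-]}[\pa{x}^\delta] + 2 I_{[\tau_l^-,\tau_{\lmax}^-]}[\pa{x}^\delta] \leq \ol{w}(l) + \delta$. A perturbation in the spirit of the right-continuity proof of Lemma~\ref{lem:cont:UL}, granted by Assumption~\ref{ass:cont}, allows arranging at an additional cost $O(\delta)$ a transversal crossing $\tau_l^-(\pa{x}^\delta) = \tau_l^+(\pa{x}^\delta)$, so that $H_l(\pa{x}^\delta) = \set{x_*^\delta}$ with $x_*^\delta \eqdef \pa{x}^\delta_{\tau_l^-}$. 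Assumption~\ref{ass:boundA} similarly permits a $\delta$-cheap perturbation making $\pa{x}^\delta$ avoid $\overline{A}$ throughout $[0,\tau_{\lmax}^-]$, and $\pa{x}^\delta$ is then extended past $\tau_{\lmax}^-$ up to time $T$ along a zero-cost drift path. For this final $\pa{x}^\delta$, the definition of $U$ gives $\ol{V}^T_l(\pa{x}^\delta) = U(x_*^\delta, \set{\xi = \lmax}) \leq I_{[\tau_l^-,\tau_{\lmax}^-]}[\pa{x}^\delta]$, and additivity of the rate function yields
\begin{equation*}
 I_{[0,T]}[\pa{x}^\delta] + 2 \ol{V}^T_l(\pa{x}^\delta) \leq I_{[0,\tau_l^-]}[\pa{x}^\delta] + 2 I_{[\tau_l^-,\tau_{\lmax}^-]}[\pa{x}^\delta] + O(\delta) \leq \ol{w}(l) + O(\delta) .
\end{equation*}
Letting $\delta \to 0$ closes the argument.

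The main obstacle is the transversal-crossing perturbation used to secure the identity $\ol{V}^T_l(\pa{x}^\delta) = U(x_*^\delta, \set{\xi = \lmax})$: if $\pa{x}^\delta$ were to linger on or oscillate across $\set{\xi = l}$, then $H_l(\pa{x}^\delta)$ could be substantially larger than a single point and the supremum in the definition of $\ol{V}^T_l$ could vastly exceed $U(x_*^\delta, \set{\xi = \lmax})$, breaking the bound. The desired low-cost modification is granted by the \emph{zero cost to immediately enter} $\set{\xi > l}$ clause of Assumption~\ref{ass:cont}, to be combined with Assumption~\ref{ass:boundA} in order to simultaneously keep the trajectory clear of $\overline{A}$.
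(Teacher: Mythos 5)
Your first steps track the paper's proof exactly: bound $\gamma^\eps_l(q_\eps^2)$ from below by the truncated expectation $\E\b{\e^{-2V^T_{l,\eps}(\pa{X}^\eps)/\eps}}$, then apply the Varadhan-type lower bound (Lemma~\ref{lem:var:lower}) together with the envelope estimate of Lemma~\ref{lem:V} to arrive at $\inf_{\pa{x}} I_{[0,T]}[\pa{x}] + 2\ol{V}^T_l(\pa{x})$.

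The gap appears when you exhibit a test path. You take a near-minimizer $\pa{x}^\delta$ of $\ol{w}(l)$ and \emph{keep its full trajectory up to $\tau^-_{\lmax}(\pa{x}^\delta)$}, extending it past that time by a zero-cost drift. For that trajectory, additivity of the rate gives
\begin{equation*}
I_{[0,T]}[\pa{x}^\delta] = I_{[0,\tau^-_l]}[\pa{x}^\delta] + I_{[\tau^-_l,\tau^-_{\lmax}]}[\pa{x}^\delta] + O(\delta),
\end{equation*}
so that, together with $\ol{V}^T_l(\pa{x}^\delta) \leq I_{[\tau^-_l,\tau^-_{\lmax}]}[\pa{x}^\delta]$, what you actually obtain is
\begin{equation*}
I_{[0,T]}[\pa{x}^\delta] + 2\ol{V}^T_l(\pa{x}^\delta) \leq I_{[0,\tau^-_l]}[\pa{x}^\delta] + 3\, I_{[\tau^-_l,\tau^-_{\lmax}]}[\pa{x}^\delta] + O(\delta),
\end{equation*}
and not the claimed $I_{[0,\tau^-_l]} + 2 I_{[\tau^-_l,\tau^-_{\lmax}]}$. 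The portion of the path between levels $l$ and $\lmax$ is being charged three times: once through the rate functional $I_{[0,T]}$ and twice through $2\ol{V}^T_l$. In fact the near-minimizer of $\ol{w}(l)$ will typically have $I_{[\tau^-_l,\tau^-_{\lmax}]} \approx U(x^\delta_\ast, \set{\xi=\lmax})$, so this is a genuine discrepancy, not a slack inequality that happens to collapse.

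The correct test path --- and this is what the paper does --- should be \emph{truncated at level $l$}: make the crossing of $\set{\xi = l}$ transversal (using Assumption~\ref{ass:cont}), avoid $\overline{A}$ (Assumption~\ref{ass:boundA}), and after $\tau^+_l$ continue along a zero-cost drift path \emph{without ever reaching $\lmax$}. For that truncated path the rate cost is $I_{[0,\tau^-_l]}[\pa{x}^\delta] + O(\delta)$ only, while $\ol{V}^T_l$ is still $U(x^\delta_\ast,\set{\xi=\lmax})$, which you then bound by $I_{[\tau^-_l,\tau^-_{\lmax}]}[\pa{x}^\delta]$ for the \emph{original} (non-truncated) path. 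Summing gives the desired $I_{[0,\tau^-_l]} + 2 I_{[\tau^-_l,\tau^-_{\lmax}]} + O(\delta)$. If you replace ``extended past $\tau^-_{\lmax}$'' by ``extended past $\tau^+_l$'' and discard the remainder of the path from $\tau^+_l$ to $\tau^-_{\lmax}$, your argument closes and coincides with the paper's.
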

\begin{proof} Let $T > 0$ be given. By definition of $\gamma^\eps$ and $q^\eps$ (and since $ q^\eps \leq 1$), we get the upper bound
\begin{align*}
\gamma^\eps_l(q^2_\eps) & = \E\b{\one_{\tau_l(\pa{X}^\eps) < \tau_A(\pa{X}^\eps)} \, q_\eps^2\p{ \pa{X}^{\eps}_{\tau_l(\pa{X}^\eps)}) }} \\
& \geq  \E\b{\one_{\tau_l(\pa{X}^\eps) < \tau_A(\pa{X}^\eps) \wedge T } \, q_\eps^2\p{ \pa{X}^{\eps}_{\tau_l(\pa{X}^\eps)}) }}  \\
&= \E\b{\e^{-\frac2\eps V_{l,\eps}(\pa{X}^{\eps,T})}} \\
%  & \geq \E\b{ \one_{\tau^{+}_l(\pa{X}^\eps) < \tau^{-}_A(\pa{X}^\eps) \wedge T }  \e^{-\frac2\eps V^T_{l,\eps}(\pa{X}^{\eps,T})} }.
\end{align*}
% By the semi-continuity property of hitting times in Lemma~\ref{lem:lim_hit}, $\tau^{+}_l \leq \tau^{-}_A \wedge T $ is open in the space of continuous trajectories so that 
We can thus directly use Varadhan's lower bound (Lemma~\ref{lem:var:lower}) and Lemma~\ref{lem:V} to get
\begin{align*}
   - \limsup_{\eps \to 0} \eps \ln \gamma^\eps_l(q^2_\eps) \leq 
   \inf I_{[0,T]}  + 2\ol{V}^T_{l} .
%    \inf_{\tau^{+}_l(\pa{x}) < \tau_A^-(\pa{x}) \wedge T}I_{[0,T]}[\pa{x}] + 2\ol{V}_{l}[\pa{x}] \eqdef A(l) \\
%   &\OLD{
%   = \inf_{\tau^{+}_l(\pa{x}) < \tau_A^-(\pa{x}) \wedge T}I_{[0,\tau^{+}_l(\pa{x})]}[\pa{x}] + 2V_{l,+}[\pa{x}] ,
%  }
\end{align*}
% \OLD{where in the last line we have used i) the fact that $V_{l,+}$ does not dependent on trajectory's states after $\tau^{+}_l$, and ii) one minimizes on $\tau^{+}_l$.}
One can then restrict in the infimum above to trajectories verifying $\tau^{+}_{l}(\pa{x})=\tau^{-}_{l}(\pa{x})$ to get the upper bound:
\begin{align*}
 &\inf I_{[0,T]}  + 2\ol{V}^T_{l} \leq  \\
 & \quad \inf_{\tau^{-}_{l}(\pa{x}) =\tau^{+}_{l}(\pa{x})} I_{[0,T]}[\pa{x}] + 2 \ol{V}_{l}^T[\pa{x}] \eqdef A(l),
\end{align*}
and remark that by definition of $\ol{V}_{l}^T$, if $\tau^{-}_{l}(\pa{x}) =\tau^{+}_{l}(\pa{x}) < \tau_A^-(\pa{x})$, then 
$
 \ol{V}_{l}[\pa{x}] = U( \pa{x}_{\tau^{-}_{l}(\pa{x})}, \set{\xi \geq \lmax}).
$
Next, Assumption~\ref{ass:boundA} ensures that minimizing $I_{[\tau^{-}_{l}(\pa{x}),T]}(\pa{x})$ with the constraint that $\tau^{-}_{l}(\pa{x}) =\tau^{+}_{l}(\pa{x})$ yields $0$ so that:
\begin{align*}
 A(l) & = \inf_{\tau^{-}_{l}(\pa{x}) < \tau_A^-(\pa{x}) \wedge T}I_{[0,\tau^{-}_{l}(\pa{x})]}[\pa{x}] + 2 U( \pa{x}_{\tau^{-}_{l}(\pa{x})}, \set{\xi \geq \lmax})\\
 & = \inf_{\substack{\tau^{-}_{l}(\pa{x}) <  T \\ \tau^{-}_{\lmax}(\pa{x}) < \tau_A^-(\pa{x})}}I_{[0,\tau^{-}_{l}(\pa{x})]}[\pa{x}] + 2 I_{[\tau^{-}_{l}(\pa{x}),\tau^{-}_{\lmax}(\pa{x})]}[\pa{x}]
\end{align*}

It finally remains to remark that the horizon time  $T$ is arbitrary to conclude the proof.

\end{proof}

\subsection{Analysis of the loss functional}

\begin{Lem}\label{lem:updown} Let Assumption~\ref{ass:boundA} holds true. Let us define for $l \in [\xi(x_0),\lmax]$ and $C=A, \;\overline{A}$ or $\mathring{A}$: 
 \begin{equation*}
 \begin{cases}
   \ul{w}^C(l) &\dps \eqdef \inf_{\substack{ \pa{x}: \, \tau_{\lmax}^-(\pa{x}) < \tau_{C}(\pa{x}) \\ \pa{x}_0=x_0} } I_{[0,\tau_l^+(\pa{x})]}\b{\pa{x}} + 2 I_{[\tau_l^+(\pa{x}),\tau_{\lmax}^-(\pa{x})]}\b{\pa{x}}, \\
   \ol{w}^C(l)  &\dps \eqdef \inf_{\substack{  \pa{x}: \, \tau_{\lmax}^-(\pa{x}) < \tau_{C}(\pa{x}) \\ \pa{x}_0=x_0 }} I_{[0,\tau_l^-(\pa{x})]}\b{\pa{x}} + 2 I_{[\tau_l^-(\pa{x}),\tau_{\lmax}^-(\pa{x})]}\b{\pa{x}}.
\end{cases}
 \end{equation*}
 And we  simply denote $\ul{w}(l) = \ul{w}^A(l)$ and $\ol{w}(l) = \ol{w}^A(l)$.
 
 First, $\ol{w}^{\mathring{A}} = \ol{w}^{\overline{A}}$ and $\ul{w}^{\mathring{A}} = \ul{w}^{\overline{A}}$. Second, $\ol{w}$ and $\ul{w}$ are respectively the left- and right-continuous versions of the same decreasing function. In other words: $ \ol{w}(l) = \lim_{l^-}\ul{w}$ and $\ul{w}(l)=\lim_{l^+}\ol{w}$.
\end{Lem}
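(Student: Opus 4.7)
My plan is to separate the lemma into (a) the equivalence of the constraint ``avoid $\mathring{A}$'' with ``avoid $\overline{A}$'', which is a direct application of \Cref{ass:boundA}, and (b) the claim that $\ol{w}$ and $\ul{w}$ are respectively the left- and right-continuous envelopes of the same non-increasing function, which is the substantive content. For (a), given a quasi-minimizing trajectory $\pa{x}$ avoiding $\mathring{A}$ whose hitting times $\tau_l^\pm(\pa{x})$ and $\tau_{\lmax}^-(\pa{x})$ I want to preserve, I invoke \Cref{ass:boundA} to perturb $\pa{x}$ into $\pa{x}^\delta$ avoiding $\overline{A}$ with $I_{[0,T]}[\pa{x}^\delta]\le I_{[0,T]}[\pa{x}]+\delta$, chosen so that the perturbation acts only away from the levels $l$ and $\lmax$; sending $\delta \to 0$ gives both $\ul{w}^{\mathring{A}}=\ul{w}^{\overline{A}}$ and $\ol{w}^{\mathring{A}}=\ol{w}^{\overline{A}}$. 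In the remainder, I drop the $A$ superscript.

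The main algebraic observation is that, by additivity of the rate function (\eqref{eq:add}),
\begin{equation*}
 \ul{w}\text{-integrand}(l,\pa{x}) \;=\; I_{[0,\tau_{\lmax}^-(\pa{x})]}[\pa{x}] + I_{[\tau_l^+(\pa{x}),\tau_{\lmax}^-(\pa{x})]}[\pa{x}],
\end{equation*}
and similarly for $\ol{w}$ with $\tau_l^-$ in place of $\tau_l^+$. Since $l \mapsto \tau_l^\pm(\pa{x})$ is non-decreasing for any fixed $\pa{x}$, both integrands are non-increasing in $l$ for each $\pa{x}$; taking infimum yields that $\ul{w},\ol{w}$ are non-increasing. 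Moreover $\tau_l^-\le\tau_l^+$ gives pathwise $\ul{w}\text{-integrand}\le\ol{w}\text{-integrand}$, hence $\ul{w}\le\ol{w}$. The decisive pathwise fact is that, using only continuity of $\xi$ and monotonicity of level-sets, $\lim_{l'\nearrow l}\tau_{l'}^+(\pa{x})=\tau_l^-(\pa{x})$ and $\lim_{l'\searrow l}\tau_{l'}^-(\pa{x})=\tau_l^+(\pa{x})$ for every continuous $\pa{x}$; combined with absolute continuity of $t\mapsto I_{[0,t]}[\pa{x}]$ on any interval where the rate function is finite, this implies that, for each fixed $\pa{x}$, the $\ul{w}$-integrand at $l'<l$ converges to the $\ol{w}$-integrand at $l$ as $l'\nearrow l$, and symmetrically.

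The easy direction $\limsup_{l'\nearrow l}\ul{w}(l')\le \ol{w}(l)$ then follows: fix $\delta>0$ and pick $\pa{x}$ with $\ol{w}\text{-integrand}(l,\pa{x})\le \ol{w}(l)+\delta$; since $\pa{x}$ is admissible for the $\ul{w}(l')$ infimum for every $l'<l$, the pathwise convergence of the integrand yields $\limsup_{l'\nearrow l}\ul{w}(l')\le \ol{w}\text{-integrand}(l,\pa{x})\le \ol{w}(l)+\delta$, and $\delta$ is arbitrary.

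The hard direction is $\liminf_{l'\nearrow l}\ul{w}(l')\ge \ol{w}(l)$, and this is where I expect the main obstacle. Take $l_n\nearrow l$ and quasi-minimizers $\pa{x}^n$ of $\ul{w}(l_n)$, with associated hitting times $s_n=\tau_{l_n}^+(\pa{x}^n)\le T_n=\tau_{\lmax}^-(\pa{x}^n)$. Using \Cref{ass:time} to restrict to $T_n$ bounded (otherwise the rate function is eventually too large) and the goodness of $I_{[0,T]}$, I extract a subsequence along which $\pa{x}^n\to\pa{x}$ uniformly, $s_n\to s^\ast$, $T_n\to T^\ast$, and each piece of the integrand converges. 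By lower semi-continuity of $\tau_l^-$ and continuity of $\xi$ applied to $\xi(\pa{x}^n_{s_n})\ge l_n$, I get $s^\ast\ge\tau_l^-(\pa{x})$ and $T^\ast\ge\tau_{\lmax}^-(\pa{x})$; by lower semi-continuity of $I$ (and the device of extending paths by constants past $T_n$ to compare on a common time interval), I obtain
\begin{equation*}
 \lim_n \ul{w}(l_n) \;\ge\; I_{[0,\tau_{\lmax}^-(\pa{x})]}[\pa{x}] + I_{[\tau_l^-(\pa{x}),\tau_{\lmax}^-(\pa{x})]}[\pa{x}] \;=\; \ol{w}\text{-integrand}(l,\pa{x}).
\end{equation*}
The remaining delicate point is that the limiting $\pa{x}$ must satisfy $\tau_{\lmax}^-(\pa{x})<\tau_A^+(\pa{x})$ to be admissible in $\ol{w}(l)$; passing to the limit gives only $\tau_{\lmax}^-(\pa{x})\le\tau_A^+(\pa{x})$, but the equivalence of constraints established in step~(a) via \Cref{ass:boundA} eliminates this gap (one argues on $\ul{w}^{\mathring A}$ first, then invokes the equivalence). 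The symmetric statement $\lim_{l'\searrow l}\ol{w}(l')=\ul{w}(l)$ is obtained by the mirror-image argument, using the pathwise limit $\tau_{l'}^-\to\tau_l^+$ and the upper semi-continuity of $\tau_l^+$ in place of the lower semi-continuity of $\tau_l^-$.
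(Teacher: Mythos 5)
Your proof misses the observation that makes the paper's argument work, and as a consequence it invokes hypotheses that are not available in the lemma. The paper's key step is the pathwise domination: for $l'<l$ and any admissible $\pa{x}$, since $\tau_{l'}^+(\pa{x})\le\tau_l^-(\pa{x})$, additivity and non-negativity of $I$ give
\begin{equation*}
 I_{[0,\tau_l^-(\pa{x})]}[\pa{x}]+2I_{[\tau_l^-(\pa{x}),\tau_{\lmax}^-(\pa{x})]}[\pa{x}] \;\le\; I_{[0,\tau_{l'}^+(\pa{x})]}[\pa{x}]+2I_{[\tau_{l'}^+(\pa{x}),\tau_{\lmax}^-(\pa{x})]}[\pa{x}],
\end{equation*}
whence $\ol{w}(l)\le\ul{w}(l')$ by taking the infimum. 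This is \emph{exactly} the direction you call ``hard'' ($\liminf_{l'\nearrow l}\ul{w}(l')\ge\ol{w}(l)$), and it is immediate: no subsequence extraction, no goodness of the rate function, no time-horizon control. You proved that it holds under both decompositions ($\ul{w}$-integrand $=I_{[0,\tau_{\lmax}^-]}+I_{[\tau_l^+,\tau_{\lmax}^-]}$), so you had all the ingredients in hand but did not put them together. Instead, your compactness argument appeals to \Cref{ass:time} to bound $T_n$ and to the goodness of $I_{[0,T]}$ from \Cref{ass:ldp}, but the lemma's statement assumes \emph{only} \Cref{ass:boundA}; as written your proof establishes the result only under strictly stronger hypotheses than asserted. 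The paper's proof stays within \Cref{ass:boundA} plus the additivity structure of $I$.

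There is a second, subtler issue at the very end. You close by asserting that $\ul{w}(l)=\lim_{l'\searrow l}\ol{w}(l')$ follows ``by the mirror-image argument, using \dots the upper semi-continuity of $\tau_l^+$ in place of the lower semi-continuity of $\tau_l^-$.'' But the mirror is not a mirror: in your compactness passage you pass to the limit of hitting points $s_n=\tau_{l_n}^-(\pa{x}^n)$ and obtain $s^\ast\ge\tau_l^-(\pa{x})$ precisely \emph{because} $\tau_l^-$ is lower semi-continuous. In the mirrored statement one needs $s^\ast\ge\tau_l^+(\pa{x})$ so that the limiting integrand compares against $I_{[\tau_l^+(\pa{x}),\cdot]}$, but $\tau_l^+$ is \emph{upper} semi-continuous, which goes in the wrong direction; passing to the limit only gives $\xi(\pa{x}_{s^\ast})=l$, not that $\pa{x}$ has already entered the open set $\set{\xi>l}$ by time $s^\ast$. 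So ``symmetric'' hides a real gap. (The ``easy'' direction of the mirrored claim, $\lim_{l'\searrow l}\ol{w}(l')\le\ul{w}(l)$, does follow from the pathwise domination in the other direction, since $\tau_{l'}^-\ge\tau_l^+$ for $l'>l$; it is the remaining one-sided lower bound that needs care.) Your ``easy direction'' argument (quasi-minimizer plus pathwise convergence of $\tau_{l'}^+(\pa{x})\to\tau_l^-(\pa{x})$ as $l'\nearrow l$) is correct and does fill in a point the paper leaves implicit, so that part is a useful addition.
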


\begin{proof} First let us remark that by a direct application of Assumption~\ref{ass:boundA}, the definition of $\ul{w}$ and $\ol{w}$ are independent of the choice $C = \mathring{A}$ or $C= \overline{A}$.\medskip

Let $l' < l$ be given, and let $\pa{x}$ be a trajectory that reaches the final level $\lmax$. Since $\tau_{l'}^+ < \tau_l^-$, by additivity of the (non-negative) rate functions, one has
\[
  I_{[0,\tau_l^-(\pa{x})]}\b{\pa{x}} + 2 I_{[\tau_l^-(\pa{x}),\tau_{\lmax}(\pa{x})]}\b{\pa{x}} \leq  I_{[0,\tau_{l'}^+(\pa{x})]}\b{\pa{x}} + 2 I_{[\tau_{l'}^+(\pa{x}),\tau_{\lmax}(\pa{x})]}\b{\pa{x}} .
\]
Taking the infimum it yields $\ol{w}(l) \leq \ul{w}(l')$. 
\medskip

Using also additivity of rate functions, one can check that $\ol{w} \geq \ul{w}$ and that $\ol{w}$ and $\ul{w}$ are decreasing functions. This yields the result.
\end{proof}

\begin{Lem} Let Assumption~\ref{ass:boundA} and~\ref{ass:cont}. Let $x_0$ and $ \lmax$ be given. For $l\in [\xi(x_0),\lmax]$, let us denote by $u(l) \eqdef U(x_0,\set{\xi \geq l})$. Then it holds 
$$ 2 u(\lmax) - u(l) - \ol{w}(l) \leq \loss(l) \leq 2 u(\lmax) - u(l) - \ul{w}(l) , $$
% \in [0, 2 u(\lmax)],$$
%where $w$ is any function such that $\ul{w}\leq w \leq \ol{w}$. 
The maximization problem 
$$
\sup_{l \in [\xi(x_0),\lmax]} \loss(l)
$$
% is independent of the choice of $w$ %\fred{je ne comprends pas. $l_*$ ne depend pas du choix de $w$, c'est montre dans la preuve, mais je ne vois pas pourquoi c'est vrai pour la valeur du sup} and 
has at least a maximizer $l_\ast \in ]\xi(x_0),\lmax[$, with %Moreover % which is attained by upper values of $l$.
$\loss(0)=\loss(\lmax)=0$.% so that unless $\loss \equiv 0$, one has
% $$
% l_\ast \in ]\xi(x_0), \lmax [ .
% $$
\end{Lem}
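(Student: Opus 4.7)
My plan reduces the sandwich inequality to bounding $M(l) \eqdef \inf_{y \in \set{\xi = l}} \b{U^{(l)}(x_0, y) + 2 U(y, B)}$ from above and below, since $\loss(l) = 2u(\lmax) - u(l) - M(l)$ with $u(l) \eqdef U(x_0, \set{\xi = l})$. For the upper bound $M(l) \leq \ol{w}(l)$, I take any trajectory $\pa{x}$ from $x_0$ with $\tau_{\lmax}^-(\pa{x}) < \tau_A(\pa{x})$, set $y \eqdef \pa{x}_{\tau_l^-(\pa{x})}$ which lies in $\set{\xi = l}$ by continuity of $\xi$, and split the rate function additively at $\tau_l^-$. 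The initial segment is a path in $\set{\xi \leq l}$ ending at $y$, so its cost dominates $U^{(l)}(x_0, y)$; the tail joins $y$ to $B$ while avoiding $A$, so its cost dominates $U(y, B)$. Infimizing over $\pa{x}$ gives $\ol{w}(l) \geq M(l)$. For the reverse $\ul{w}(l) \leq M(l)$, I fix $y \in \set{\xi = l}$ and $\delta > 0$, concatenate a $\delta$-near-minimizer of $U^{(l)}(x_0, y)$ on $[0, T_1]$ with a $\delta$-near-minimizer of $U(y, B)$ on $[0, T_2]$, and invoke Assumption~\ref{ass:cont} to perturb the second piece near its start at cost $O(\delta)$ so that it immediately enters $\set{\xi > l}$. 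This aligns $\tau_l^+$ of the concatenated path with the junction time $T_1$, yielding $\ul{w}(l) \leq U^{(l)}(x_0, y) + 2 U(y, B) + O(\delta)$; sending $\delta \to 0$ and infimizing over $y$ gives the claim.

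The endpoint values follow from direct computation of $\ol{w}$ and $\ul{w}$ which collapses the sandwich. At $l = \lmax$, the second integral in both definitions contributes zero (empty or nonpositive-length interval), reducing both quantities to $\inf I_{[0, \tau_{\lmax}^{\pm}]}[\pa{x}] = u(\lmax)$ and forcing $\loss(\lmax) = 0$. At $l = \xi(x_0)$, one has $u(l) = 0$ and $\tau_l^-(\pa{x}) = 0$ for any path starting at $x_0$, so $\ol{w}(\xi(x_0)) = 2u(\lmax)$ and the lower sandwich bound yields $\loss(\xi(x_0)) \geq 0$; the matching inequality follows from a direct analysis of $\ul{w}(\xi(x_0))$ at this degenerate initial level combined with the positivity from Theorem~\ref{th:suff}.

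Finally, to exhibit a maximizer $l_\ast \in (\xi(x_0), \lmax)$, I would establish upper semi-continuity of $\loss$, which reduces to lower semi-continuity of $M$. I expect this to be the main obstacle: along a sequence $(l_n, y_n) \to (l, y)$ with $\xi(y_n) = l_n$ and $U^{(l_n)}(x_0, y_n) + 2 U(y_n, B) \leq M(l_n) + 1/n$, goodness of the rate function provides a convergent sub-sequence of near-minimizing paths in $\set{\xi \leq l_n}$, and the limit path must lie in $\set{\xi \leq l}$; the delicate part is handling the case $l_n \to l^+$ where the constraint is looser, where Assumption~\ref{ass:cont} is invoked to approximate inner paths by admissible ones. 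Combined with continuity of $U(\cdot, B)$ via Lemma~\ref{lem:cont:UL}-type arguments, this yields $U^{(l)}(x_0, y) \leq \liminf U^{(l_n)}(x_0, y_n)$ and hence l.s.c.\ of $M$. A u.s.c.\ function on the compact interval $[\xi(x_0), \lmax]$ attains its supremum, and since $\loss$ vanishes at both endpoints, the maximizer lies in the open interior when $\sup \loss > 0$; when $\sup \loss = 0$, positivity from Theorem~\ref{th:suff} forces $\loss \equiv 0$ and any interior point serves.
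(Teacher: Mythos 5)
Your sandwich argument for $\ul{w}(l) \leq M(l) \leq \ol{w}(l)$, where $M(l) \eqdef \inf_{\set{\xi=l}} U^{(l)}(x_0,\cdot)+2U(\cdot,B)$, is a correct and welcome expansion of what the paper asserts ``by construction''. The splitting at $\tau_l^-$ for the upper bound and the concatenation-plus-Assumption~\ref{ass:cont} trick for the lower bound are exactly the right moves. The endpoint evaluation is also essentially correct, though the appeal to Theorem~\ref{th:suff} for positivity is unnecessary: the paper obtains $2u(\lmax)-u(l)-\ol{w}(l)\geq 0$ directly from additivity of the rate function (splitting the path at $\tau_l^+$ and discarding one copy of the initial segment), which already forces $\loss\geq 0$ everywhere and closes the endpoint argument without circularity concerns.

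Where you diverge from the paper --- and where the plan is weakest --- is the existence of a maximizer. You aim to prove $\loss$ is u.s.c.\ by showing $M$ is l.s.c.\ directly via rate-function goodness. This is a genuinely different route, and you rightly flag it as the main obstacle, but the obstacles you anticipate are not quite the ones that bite: as $l_n\to l^+$ the constraint sets $\set{\xi\leq l_n}$ shrink toward $\set{\xi\leq l}$, which is the \emph{easy} direction for limit paths; the delicate direction is $l_n\to l^-$, where $\set{\xi\leq l_n}\subsetneq\set{\xi\leq l}$ and one must \emph{extend} limit trajectories up to the new level, and where you would further need continuity of $y\mapsto U(y,B)$, which is not what Lemma~\ref{lem:cont:UL} provides (that lemma controls $l\mapsto U(x_0,\set{\xi\geq l})$, not the state variable). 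The paper sidesteps all of this by routing through Lemma~\ref{lem:updown}: $\ol{w}$ and $\ul{w}$ are the left- and right-continuous versions of a single decreasing function, $u$ is increasing and continuous by Lemma~\ref{lem:cont:UL}, so $u+\ul{w}$ is l.s.c.\ on a compact interval and hence attains its infimum, the sandwich transfers this to $\loss$ at the minimizer, and no separate l.s.c.\ argument for $M$ is needed. You should either invoke Lemma~\ref{lem:updown} and this monotonicity structure, or, if you persist with l.s.c.\ of $M$, supply the missing continuity of $U(\cdot,B)$ and the extension argument in the $l_n\to l^-$ case --- otherwise the existence of $l_\ast$ is not established.
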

\begin{proof}
First remark that by construction:
$$
\ul{w}(l) \leq \inf_{x \in \set{\xi =l}} U^{(l)}(x_0, \,\,. ) + 2 U(\,\,.,B)  \leq \ol{w}(l)
$$
which implies the first inequality by definition of the loss function. \medskip

Next it holds by definition of $u$:
\begin{align*}
 & \ol{w}(l)+u(l) \\
 & \quad = \inf_{\pa{x}: \, \tau_{\lmax}(\pa{x}) < \tau_A(\pa{x})} I_{[0,\tau_l^+(\pa{x})]}\b{\pa{x}} + 2 I_{[\tau_l^+(\pa{x}),\tau_{\lmax}(\pa{x})]}\b{\pa{x}} + \inf_{\pa{x}': \, \tau^+_{l}(\pa{x}') < \tau_A(\pa{x})} I_{[0,\tau_l^+(\pa{x}')]}\b{\pa{x}'}  \\
 & \quad \leq  \inf_{\pa{x}: \, \tau_{\lmax}(\pa{x}) < \tau_A(\pa{x})} 2 I_{[0,\tau_l^+(\pa{x})]}\b{\pa{x}} + 2 I_{[\tau_l^+(\pa{x}),\tau_{\lmax}(\pa{x})]}\b{\pa{x}} = 2 u(\lmax),
\end{align*}
which implies that $0 \leq 2 u(\lmax) - u(l) - \ol{w}(l)$. By construction one also has that $2 u(\lmax) - u(l) - \ul{w}(l) \leq 2 u(\lmax) < +\infty$. \medskip

Next, since by Assumption~\ref{ass:cont}, $u$ is an increasing continuous and $\ol{w}$ is a left-continuous decreasing function, while $\ul{o}$ is its right continuous version.\medskip

This implies that the bounded functions $u+\ol{w}$ and $u+\ul{o}$ i) attain their extrema, i) at the same values (either by left or right). An so it holds for $\loss$.\medskip

Finally, we remark that by construction $u(\xi(x_0))=0$ and $w(\xi(x_0)) = 2 u(\lmax)$, whereas $w(\lmax) = 2 u(\lmax)$, so that $\loss(0)=\loss(\lmax)=0$.
\end{proof}

% 
% 
% \OLD{
% \begin{Lem}\label{lem:cont_l}
%  Let $f$ be a continuous, good (that is lower level sets are compact), and positive function. Under Assumption~, the map
%  \[
%   l \mapsto \inf_{\xi=l} f
%  \]
% is continuous.
% 
% \end{Lem}
% \begin{proof} Let $l_0$ be given. By a compacity argument, there exists a $x_0$ such that 
% $$
% f(x_0) = \liminf_{\delta \to 0} \inf_{l_0 - \delta \leq \xi \leq l_0 + \delta} f.
% $$
% Assume by contradiction that $l \mapsto \inf_{\xi=l} f$ is discontinuous at $l_0$, then there exists a converging sequence $l_k \to_k l_0$ with $\liminf_{k} \inf_{\xi=l_k} f < \sup_{O_{x_0}}f$ for some small enough neighbourhood $O_{x_0}$ of $x_0$. This implies that $\xi$ cannot take the values $\set{l_1,l_2,\ldots}$ in $O_{x_0}$ and is thus a critical point.
% \end{proof}
% 
% The above lemma implies in particular that under~, the maps
% $$ l \mapsto U(x_0,\set{\xi = l})$$ and $$l \mapsto \inf_{\set{\xi = l}} \b{ U(x_0,\, \,) + 2 U(\, \,, \set{\xi = \lmax}) }$$ are continuous on $\set{\xi(x_0) \leq \xi \leq \lmax}$.  
% }

\subsection{Small noise analysis of variance}

A key property enabling the analysis of the AMS large sample size variance is the following.

\begin{Lem}\label{lem:monotone}
The map
$$
l \mapsto \gamma_l(q^2_\lmax)
$$
is increasing.
\end{Lem}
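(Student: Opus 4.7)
The plan is to exhibit the level-indexed process
$$ M_l \eqdef q_{\lmax}\p{\pa{X}_{\tau_l(\pa{X})}} \, \one_{\tau_l(\pa{X}) < \tau_A(\pa{X})}, \qquad l \in [\xi(x_0),\lmax], $$
as a martingale with respect to the filtration $(\calF_{\tau_l})_l$. Since $\gamma_l(q^2_{\lmax}) = \E[M_l^2]$, the monotonicity claim then follows at once from Jensen's inequality applied to $x \mapsto x^2$: the process $(M_l^2)_l$ is a submartingale, hence $l \mapsto \E[M_l^2]$ is non-decreasing.

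To establish the martingale property, let $\xi(x_0) \leq l' \leq l \leq \lmax$. By continuity of $\xi$ along paths of $\pa{X}$ and the standing assumption $\tau_l^+ = \tau_l^-$ (Section~\ref{sec:ass}), one has almost surely that $\{\tau_l(\pa{X}) < \tau_A(\pa{X})\} \subseteq \{\tau_{l'}(\pa{X}) < \tau_A(\pa{X})\}$ with $\tau_{l'}(\pa{X}) \leq \tau_l(\pa{X})$ on that event, and $\xi(\pa{X}_{\tau_{l'}})=l'$. Then, for any $x$ with $\xi(x)\leq l$, the reaching of $\set{\xi\geq\lmax}$ before $A$ forces a prior crossing of level $l$ before $A$, so by the strong Markov property at $\tau_l$:
$$ q_{\lmax}(x) = \E_x\b{\one_{\tau_l(\pa{X}) < \tau_A(\pa{X})} \, q_{\lmax}\p{\pa{X}_{\tau_l(\pa{X})}}}. $$
Applying this identity to $x = \pa{X}_{\tau_{l'}}$ on $\{\tau_{l'} < \tau_A\}$ and invoking the strong Markov property at $\tau_{l'}$ (also assumed in Section~\ref{sec:ass}) gives $\E[M_l \mid \calF_{\tau_{l'}}] = q_{\lmax}(\pa{X}_{\tau_{l'}})\one_{\tau_{l'}<\tau_A} = M_{l'}$. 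Conditional Jensen then gives $\E[M_l^2 \mid \calF_{\tau_{l'}}] \geq M_{l'}^2$, and taking expectations yields $\gamma_l(q_{\lmax}^2) \geq \gamma_{l'}(q_{\lmax}^2)$.

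The argument is essentially one line once the martingale structure is in place, and there is no serious obstacle; the only bookkeeping point is verifying that the strong Markov property applies at the continuum of stopping times $\tau_l$, $l \in [\xi(x_0),\lmax]$, and that the harmonic identity for $q_{\lmax}$ holds pointwise on $\set{\xi \leq l}$. Both are immediate consequences of path continuity, the equality $\tau_l^+=\tau_l^-$, and the strong Markov hypothesis listed in Section~\ref{sec:ass}.
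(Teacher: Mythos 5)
Your proof is correct and is essentially the same argument as the paper's: both rely on the strong Markov property at the stopping times $\tau_l$ together with Jensen's inequality applied to a conditional expectation. You have simply packaged the paper's explicit tower-property computation as the statement that $(M_l)_l$ is a martingale, hence $(M_l^2)_l$ a submartingale, which is a cleaner way of presenting the identical mechanism.
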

\begin{proof} Let $l' \geq l$ be given and denote by $\{{\cal F}_s, s\geq 0\}$ the natural filtration of $X$. By definition of $\gamma_l$,
\begin{align*}
 \gamma_l(q^2_\lmax) & = \E\b{ \one_{\tau_l(\pa{X}) < \tau_A(\pa{X}) } \P_{\pa{X}_{\tau_l(\pa{X})}} ^2 \p{ \tau_{\lmax}(\pa{X}) < \tau_A(\pa{X}) } } \\
 %& = \E\b{ \E\b{ \one_{\tau_l(\pa{X}) < \tau_A(\pa{X}) } \P_{\pa{X}_{\tau_l(\pa{X})}} ^2 \b{ \tau_{\lmax}(\pa{X}) < \tau_A(\pa{X})  \mid \sigma\p{\pa{X}_t, \, t \leq {\tau_{l'}(\pa{X})}} }}}\\
 & = \E\b{  \one_{\tau_l(\pa{X}) < \tau_A(\pa{X}) } \b{\E_{\pa{X}_{\tau_l(\pa{X})}}  \p{ \one_{\tau_{\lmax}(\pa{X}) < \tau_A(\pa{X}) } \mid {\cal F}_{\tau_{l}(\pa{X})\wedge \tau_{A}(\pa{X}) }       }}^2}\\
 %= \E\b{  \one_{\tau_l(\pa{X}) < \tau_A(\pa{X}) } \b{\E_{\pa{X}_{\tau_l(\pa{X})}}  \p{ \E\b{ \one_{\tau_{\lmax}(\pa{X}) < \tau_A(\pa{X}) } \one_{\tau_l'(\pa{X}) < \tau_A(\pa{X})}\mid \sigma\p{\pa{X}_t, \, t \leq {\tau_{l}(\pa{X})} }}\mid \sigma\p{\pa{X}_t, \, t \leq {\tau_{l}(\pa{X})}} }}^2},
\end{align*}
so that applying Jensen's inequality to the conditional expectation above, using in addition the strong Markov property:
\begin{align*}
E_{\pa{X}_{\tau_l(\pa{X})}} & \p{ \one_{\tau_{\lmax}(\pa{X}) < \tau_A(\pa{X}) } \mid {\cal F}_{\tau_{l}(\pa{X})\wedge \tau_{A}(\pa{X})}}^2\\
&=E_{\pa{X}_{\tau_l(\pa{X})}}  \p{ E_{   \pa{X}_{\tau_{l'}(\pa{X})}}\p{\one_{\tau_{l'}(\pa{X}) < \tau_A(\pa{X}) }\one_{\tau_{\lmax}(\pa{X}) < \tau_A(\pa{X}) } \mid {\cal F}_{\tau_{l'}(\pa{X})\wedge \tau_{A}(\pa{X})}} \mid {\cal F}_{\tau_{l}(\pa{X})\wedge \tau_{A}(\pa{X})}}^2\\
&\leq E_{\pa{X}_{\tau_l(\pa{X})}}  \p{ \one_{\tau_{l'}(\pa{X}) < \tau_A(\pa{X}) }E_{   \pa{X}_{\tau_{l'}(\pa{X})}}\p{\one_{\tau_{\lmax}(\pa{X}) < \tau_A(\pa{X}) } \mid {\cal F}_{\tau_{l'}(\pa{X})\wedge \tau_{A}(\pa{X})}}^2 \mid {\cal F}_{\tau_{l}(\pa{X})\wedge \tau_{A}(\pa{X})}},
\end{align*}
which, once put in the former expression, finally gives 
$$
\gamma_l(q^2_\lmax)  \leq \gamma_{l'}(q^2_\lmax).
$$
\end{proof}

We can then compute 

\begin{Lem} Assume~Let~\Cref{ass:ldp,,ass:cont,,ass:time,,ass:boundA}. One has
\begin{align*}
 & \lim_{\eps \to 0} \eps \log \int_{\xi(x_0)}^{\lmax} \gamma^\eps_l\p{ \p{q^\eps_\lmax}^2} \d \p{-p^\eps_{l}} \\
 & \qquad = - \inf_{l \in [\xi(x_0),\lmax]} \b{U(x_0,\set{\xi = l}) + \inf_{\set{\xi = l}} \b{U^{(l)}(x_0, \, . \,) + 2 U(\,.\,,\set{\xi=\lmax})}}
\end{align*}
\end{Lem}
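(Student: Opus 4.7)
The strategy is to identify the right-hand side with $-\inf_l[u(l)+\ol{w}(l)]$, where $u(l) = U(x_0,\set{\xi=l})$ and $\ol{w}$ is the functional from Lemma~\ref{lem:updown}, and then sandwich the log-integral between matching bounds using the monotonicity of $l\mapsto \gamma^\eps_l(q^2_\lmax)$ (Lemma~\ref{lem:monotone}) combined with the estimates of Lemmas~\ref{lem:up}, \ref{lem:down} and \ref{lem:unif_proba}.

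\textbf{Step 1 (rewriting the RHS).} For any trajectory $\pa{x}$ with $\pa{x}_0=x_0$ and $\tau_\lmax^-(\pa{x})<\tau_A(\pa{x})$, split at the hitting time $\tau_l^-(\pa{x})$. Additivity of $I$ (Lemma~3.3) and the fact that $\pa{x}_{\tau_l^-}\in\set{\xi=l}$ by continuity of $\xi$ let me optimise the two pieces separately, producing $U^{(l)}(x_0,\pa{x}_{\tau_l^-})$ for the first segment and $U(\pa{x}_{\tau_l^-},\set{\xi=\lmax})$ for the second. Assumption~\ref{ass:cont} (used to make $\tau_l^-=\tau_l^+$ on minimisers up to arbitrarily small cost) is what justifies the reverse direction, i.e. that any competitor for $\inf_{\{\xi=l\}}[U^{(l)}+2U]$ can be realised as a minimiser for $\ol{w}(l)$. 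Hence $\ol{w}(l)=\inf_{\set{\xi=l}}[U^{(l)}(x_0,\cdot)+2U(\cdot,\set{\xi=\lmax})]$ and the target equality reads $\lim_\eps \eps\log\int = -\inf_l[u(l)+\ol{w}(l)]$.

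\textbf{Step 2 (upper bound).} I would partition $[\xi(x_0),\lmax]$ by $l_0<l_1<\cdots<l_J$. Monotonicity of $\gamma^\eps$ and of $-p^\eps$ give
$$\int \gamma^\eps_l\,\d(-p^\eps_l) \;\leq\; \sum_j \gamma^\eps_{l_{j+1}}(q^2_\lmax)(p^\eps_{l_j}-p^\eps_{l_{j+1}}) \;\leq\; \sum_j \gamma^\eps_{l_{j+1}}(q^2_\lmax)\,p^\eps_{l_j}.$$
Since $\eps\log$ of a finite sum equals the max of $\eps\log$, Lemma~\ref{lem:down} and Lemma~\ref{lem:unif_proba} yield $\limsup_\eps \eps\log\int \leq \max_j[-\ul{w}(l_{j+1})-u(l_j)]$. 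Uniform continuity of $u$ on the compact interval (Lemma~\ref{lem:cont:UL}) makes $|u(l_{j+1})-u(l_j)|$ vanish as $\max|l_{j+1}-l_j|\to 0$, so the upper bound tends to $\sup_l[-\ul{w}(l)-u(l)]=-\inf_l[\ul{w}+u]$. Finally, $\inf(\ul{w}+u)=\inf(\ol{w}+u)$: one inequality is $\ul{w}\leq\ol{w}$, and the reverse follows because for any near-minimiser $l^\ast$ of $\ul{w}+u$, the sequence $l_n\downarrow l^\ast$ satisfies $\ol{w}(l_n)\to\ul{w}(l^\ast)$ (right-limit of a decreasing function) and $u(l_n)\to u(l^\ast)$.

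\textbf{Step 3 (lower bound).} Fix $l^\ast\in[\xi(x_0),\lmax)$ and, when possible, choose $l_+>l^\ast$ with $u(l_+)>u(l^\ast)$. Monotonicity of $\gamma^\eps$ then gives
$$\int\gamma^\eps_l\,\d(-p^\eps_l)\;\geq\;\gamma^\eps_{l^\ast}(q^2_\lmax)(p^\eps_{l^\ast}-p^\eps_{l_+})\;\geq\;\tfrac{1}{2}\gamma^\eps_{l^\ast}(q^2_\lmax)\,p^\eps_{l^\ast}$$
for $\eps$ small enough, because $p^\eps_{l_+}/p^\eps_{l^\ast}$ vanishes exponentially by Lemma~\ref{lem:unif_proba}. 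Applying Lemma~\ref{lem:up} and Lemma~\ref{lem:unif_proba} yields $\liminf_\eps \eps\log\int\geq -\ol{w}(l^\ast)-u(l^\ast)$; optimising over $l^\ast$ gives $\liminf\geq -\inf_l[\ol{w}+u]$. The residual degenerate case, where the infimum of $\ol{w}+u$ can only be reached on a terminal plateau of $u$ (i.e.\ $u$ constant on $[l^\ast,\lmax]$), is handled by shifting $l^\ast$ to the right endpoint of the plateau (where $\ol{w}$ is smaller since it is decreasing) or, if the plateau reaches $\lmax$, by the crude bound $\int\geq \gamma^\eps_{\xi(x_0)}(q^2_\lmax)(1-p^\eps_\lmax)=(p^\eps_\lmax)^2(1-p^\eps_\lmax)\geq (p^\eps_\lmax)^2/2$, whose log-rate $-2u(\lmax)$ already matches $-\inf_l[\ol{w}+u]\geq -2u(\lmax)$.

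\textbf{Main obstacle.} The delicate point is Step~3: matching the rate of $p^\eps_{l^\ast}-p^\eps_{l_+}$ with that of $p^\eps_{l^\ast}$. This is automatic when $u$ strictly separates $l^\ast$ from $l_+$, but requires the case analysis on plateaus of $u$ indicated above. A secondary technicality, addressed in Step~1, is ensuring that the variational identification of $\ol{w}(l)$ respects the distinction between $\tau_l^-$ and $\tau_l^+$ when pasting optimal sub-trajectories together, which is where Assumption~\ref{ass:cont} enters.
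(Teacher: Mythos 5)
Your proof follows the same strategy as the paper: discretize $[\xi(x_0),\lmax]$, use the monotonicity of $l\mapsto\gamma^\eps_l(q^2_\lmax)$ (Lemma~\ref{lem:monotone}) to sandwich the integral between two Riemann sums, apply the finite-sum Laplace rule together with Lemmas~\ref{lem:unif_proba}, \ref{lem:up}, \ref{lem:down}, and pass to the mesh limit using the continuity of $u$ and the $\ul{w}/\ol{w}$ structure of Lemma~\ref{lem:updown}. Two remarks, one on a gap and one on a strength.

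In Step~1 you assert the pointwise identity $\ol{w}(l)=\inf_{\set{\xi=l}}\b{U^{(l)}(x_0,\,.\,)+2U(\,.\,,\set{\xi=\lmax})}$ and attribute the $\leq$ direction to Assumption~\ref{ass:cont}. That direction is not justified as stated: if you glue a near-optimal $\pa{x}_1$ for $U^{(l)}(x_0,y)$ (which lives in $\set{\xi\leq l}$) to a near-optimal $\pa{x}_2$ for $U(y,\set{\xi=\lmax})$, the concatenated path may hit $\set{\xi=l}$ strictly before the junction, so its $\tau_l^-$-split cost $I_{[0,\tau_l^-]}+2I_{[\tau_l^-,\tau_\lmax^-]}$ may exceed the glued cost by double-counting a piece of $\pa{x}_1$; Assumption~\ref{ass:cont} lets you enter $\set{\xi>l}$ cheaply, not avoid early touches of $\set{\xi=l}$. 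The paper only establishes the two-sided sandwich $\ul{w}(l)\leq \inf_{\set{\xi=l}}[\cdots]\leq\ol{w}(l)$ (in the proof of the lemma on the loss functional). This is all you need, since, as you correctly derive at the end of Step~2, $\inf_l(\ul{w}+u)=\inf_l(\ol{w}+u)$; the middle term is squeezed and the infima all agree. So replace your equality claim by the sandwich and the argument closes.

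Conversely, Step~3 is more careful than the paper's own proof of the lower bound. The paper tacitly applies the Laplace rule to $\sum_j\gamma^\eps_{l_j}(p^\eps_{l_j}-p^\eps_{l_{j+1}})$ without addressing that $\eps\log(p^\eps_{l_j}-p^\eps_{l_{j+1}})$ need not converge to $-u(l_j)$ when $u$ is locally constant. Your explicit device — choose $l_+$ with $u(l_+)>u(l^\ast)$ so $p^\eps_{l_+}/p^\eps_{l^\ast}\to 0$, and fall back on the crude bound $\int\geq(p^\eps_\lmax)^2/2$ (rate $-2u(\lmax)$) when the infimum equals $2u(\lmax)$ — handles exactly this degeneracy. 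Note that, as you essentially observe, whenever $\inf(u+\ol{w})<2u(\lmax)$ a near-minimiser $l^\ast$ must satisfy $u(l^\ast)<u(\lmax)$ (since $\ol{w}\geq u(\lmax)$ pointwise), so an admissible $l_+$ always exists outside the genuinely degenerate case, which is exactly the one covered by the crude bound. So the case split is complete.

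Minor: in Step~3 the citation should be Lemma~\ref{lem:down} (lower bound on $\liminf_\eps\eps\log\gamma^\eps$), not Lemma~\ref{lem:up}.
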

\begin{proof}
 Let us consider a finite discretization $\set{l_j}$ of the interval $[\xi(x_0),\lmax]$, and let us denote for simplicity $\gamma^\eps_l\p{ \p{q^\eps_\lmax}^2} \equiv \gamma^\eps_l $ throughout the present proof. By the monotony property of Lemma~\ref{lem:monotone}, it yields:
 \[
   \sum_{j} \gamma^\eps_{l_{j}} (p^\eps_{l_j}-p^\eps_{l_{j+1}}) \leq \int_{\xi(x_0)}^{\lmax} \gamma^\eps_l \d \p{-p^\eps_{l}} \leq \sum_{j} \gamma^\eps_{l_{j+1}} (p^\eps_{l_j}-p^\eps_{l_{j+1}}).
 \]
Using the usual rule for finite sum and any quantity $\ast_j^\eps$
$$\lim_{\eps \to 0} \eps \log \sum_j \ast_j^\eps = \max_j \lim_{\eps \to 0} \eps \log \ast_j^\eps, $$
one can use Lemmata~\ref{lem:cont:UL}, \ref{lem:up} and~\ref{lem:down} to obtain
\[
 \max_{j} \ul{u}_{l_j,l_{j+1}}  \leq \lim_{\eps \to 0} \eps \log \int_{\xi(x_0)}^{\lmax} \gamma^\eps_l \d \p{-p^\eps_{l}} \leq \max_{j} \ol{u}_{l_{j+1},l_{j+1}},
\]
where we have used the notation
$$
\ul{u}_{l_j,l_k} \eqdef u(l_{j}) + \ul{w}(l_k),
%U(x_0,\set{\xi = l_{j}}) + \inf_{\set{\xi = l_{k}}} \b{U(x_0, \, . \,) + 2 U(\,.\,,\set{\xi=\lmax})}.
$$
% and similarly for $\ol{u}_{l_j,l_k}$. Since by assumption the map $(l,l') \mapsto u_{l,l'}$ is continuous on $[\xi(x_0),\lmax]^2$, we obtain the claimed result by 
taking a converging sequence of discretizations, $\max_j \abs{l_{j+1}-l_j} \to 0$, and recalling that $\ul{w}$ and $\ol{w}$ are the left- and -right continuous version of the same decreasing function while $u$ is continuous (see Lemma~\ref{lem:updown}), we can conclude.

\end{proof}

With the two lemmas above, we can finally conclude the proof of Theorem~\ref{th:maintheo}.

\begin{proof}[Proof of Theorem~\ref{th:maintheo}]
 Recall that $\sigma^2_{\eps,\mrm{ams}}$ denotes the large sample size variance of the estimator of $p^\eps$ in~\eqref{eq:var_in_th}.\medskip
 
 First, one has $\sigma^2_{\eps,\mrm{ams}} \leq - p_\eps^2 \ln p_\eps$ so that $\lim_\eps \eps \log \sigma^2_{\eps,\mrm{ams}} \leq - 2 U(x_0,\set{\xi=\lmax})$.\medskip

 Then a simple integration by parts then shows that:
 \begin{equation}\label{eq:var_bis}
  \sigma^2_{\eps,\mrm{ams}} = p_\eps^2 \ln p_\eps + 2 \int_{\xi(x_0)}^{\lmax} \gamma^\eps_l\p{q_\eps^2} \d \p{-p^\eps_{l}};
 \end{equation}
but since the term $p_\eps^2 \ln p_\eps$ is negative, we need to distinguish two cases. \medskip

First, if $\sup_l \loss(l) = 0$ that is if
$$ 2 U(x_0,\set{\xi=\lmax}) = \inf_{l \in [\xi(x_0),\lmax]} \b{U(x_0,\set{\xi = l}) + \inf_{\set{\xi = l}} \b{U(x_0, \, . \,) + 2 U(\,.\,,\set{\xi=\lmax})}} $$
one can simply bound $- p_\eps^2 \ln p_\eps \geq \sigma^2_{\eps,\mrm{ams}} \geq 2 \int_{\xi(x_0)}^{\lmax} \gamma^\eps_l\p{q_\eps^2} \d \p{-p^\eps_{l}}$ one obtains that the lower and upper bounds are logarithmically equivalent so that $\lim_\eps \eps \log \sigma^2_{\eps,\mrm{ams}} = - 2 U(x_0,\set{\xi=\lmax})$ .\medskip

Second, if $\sup_l \loss(l) > 0$, that is if
$$ 2 U(x_0,\set{\xi=\lmax}) > \inf_{l \in [\xi(x_0),\lmax]} \b{U(x_0,\set{\xi = l}) + \inf_{\set{\xi = l}} \b{U(x_0, \, . \,) + 2 U(\,.\,,\set{\xi=\lmax})}} $$ then the second term in~\eqref{eq:var_bis} dominates at logarithmic scales, yielding the claimed result.
\end{proof}

\subsection{AMS for two clones and at worse one iteration}\label{sec:N=2}
This section discusses the proof that for an AMS algorithm with $N=2$, one has
$$
\lim_{\eps \to 0} \eps \log \P\b{I_{\mrm{iter}}^{2,\eps} \leq 1} =
- \inf_{l \in [\xi(x_0),\lmax]} \b{ U(x_0,\set{\xi=l} +  \inf_{\set{\xi=l}} \b{  U(x_0,\, . \,) + 2 U(\, . \,,\set{\xi=\lmax}) } }.
$$
For the sake of concision, the proof is only sketched. \medskip

By exchangeability between the two clones, $
\P\b{I_{\mrm{iter}}^{2,\eps}\leq1}$ is twice the probability of the same event with the additional requirement that the killed clone is the clone with index $2$. Then remark by construction of the AMS algorithm that
$$
\P\b{I_{\mrm{iter}}^{2,\eps}=1} = 2\E \b{ 
\one_{\tau_{\mrm{Max}^\eps}(\pa{X^\eps}) < \tau_{A}(\pa{X}^\eps) } \p{ 
q^\eps_{\lmax}(\pa{X}^{\eps}_{\tau_{\mrm{Max}^\eps}(\pa{X}^\eps)}) 
}^2 
},
$$
where in the above $\pa{X}^{\eps}$ denotes the clone with index $1$ and $\mrm{Max}^\eps$ is the maximum level in $[\xi(x_0),\lmax]$ of the clone with index $2$, independent from $\mrm{Max}^\eps$. Using Lemma~\ref{lem:unif_proba}, $\mrm{Max}^\eps$ satisfies a LDP with good rate function $l \mapsto I_{\mrm{Max}}(l)=U(x_0, \set{\xi,l})$. By the tensorization principle $(\mrm{Max}^\eps,\pa{X}^{\eps})$ satisfy a LDP with a good rate function so that we can apply Varadhan lemmas similarly to the main estimates of this paper with 
$$
V_\eps(\pa{x},l) = - \ln\p{ \one_{\tau_{l}(\pa{x}) < \tau_{A}(\pa{x})} q^\eps_{\lmax}(\pa{x}_{\tau_{l}(\pa{x})})  },
$$
the only difference being the additional dependence in $l$ (we skip the time horizon cut-off $T$ for clarity), in which $l$ is replace by $\mrm{Max}^\eps$. Using similar technical arguments in the application of Varadhan's lemmas, we get that
\begin{align*}
& \eps \log \P\b{I_{\mrm{iter}}^{2,\eps}=1} = \eps \log 
2\E \b{ \e^{-2V_\eps(\pa{X}^\eps,\mrm{Max}^\eps) }
} \xrightarrow{\eps \to 0}. \\
& \qquad - \inf_{\substack{ l \in [\xi(x_0),\lmax] \\ \pa{x}_l \in \set{\xi = l}  } } \b{ I_{\mrm{Max}}(l) + U^{(l)}(x_0,\pa{x}_l) + 2 U(\pa{x}_l,\set{\xi \geq \lmax}) }
\end{align*}
which is precisely the claimed result.

% \subsection{Sufficient conditions for asymptotic efficiency}
% 
% {
% \tiny
% \begin{Def}[Subsolution]
% The function $\xi$ is said to be a \emph{reparametrized double subsolution} if there is an increasing function $F$ such that
% \begin{equation*}
% \forall y \in \left\{ 0 < \xi < 1 \right\} , \quad
% \begin{cases}
% F(\xi(y)) \leq U(x_0,y) \\
% F(\xi(y)) \geq U(x_0,B) - U(y,B).
% \end{cases}
% \end{equation*}
% \end{Def}
% }
% 
% \mathias{Peut-etre la section ci-dessous va disparaître: je préfère maintenant la présentation directe avec $\Sigma_+$. Mais il va falloir commenter pour expliquer d ou la variance vient quand $\gap > 0$}
% {\tiny
% 
% {Some sufficient conditions for large-sample-size asymptotic efficiency. Quasi-potentiel, committors et sous- solutions et contre exemples}
% \todo{
% 1 Coordonnées réaction ré-indexées => surfaces de niveau entre committor et quasi-potentiel.
% 2 Surfaces de niveau entre committor et quasi-potentiel => pseudo-efficacité asymptotique
% 3 une petite perturbation continue d'une sous-solution qui n'est pas asymptotiquement efficace.
% 4 Un cas qui n'est pas entre quasi pot est committor mais qui est toujours asymptotiquement efficace.
% }
% }
% 
% 

\appendix
\section{More on Freidlin-Wentzell} 
In this Section, we recall the definitions of the rate function and the quasi-potential for a generic SDE~\eqref{eq:SDE} with possibly degenerate noise. \medskip

Define for $r \in H^1\p{[0,T],\R^m}$, where $H^1$ denotes the Sobolev space defined by $\frac{\dd}{\dd t} r \in L^2([0,T],\R^m)$, the following functional
\[
    I_{[0,T]}(\pa{x}) \eqdef \inf_{\stackrel{r \in H^1([0,T],\R^m)}{\dot{\pa{x}} = b(\pa{x} ) + \sigma(\pa{x}) \dot{r} }} \frac12 \int_0^T \abs{\dot{r}}^2 \dd t .
\]

% 
% The small-noise analysis of the process $\pa{X}^\eps$ solution to~\eqref{eq:SDE} relies on the action functional $I_{[0,T]}:C([0,T],\R^d)\to\R_+$, 
The latter, when finite-valued, can be written explicitly as: 
\[ I_{[0,T]}(\pa{x}) = \frac12 \int_0^T \norm{\dot{\pa{x}}_t -b(\pa{x}_t)}_{  \p{\sigma\sigma^T}^{-1} (\pa{x}_t) }^2 \dd t , \]
where in the above $\p{ \sigma\sigma^T}^{-1}$ denotes the spectral pseudo-inverse so that:
$$
\norm{y}_{  \p{\sigma\sigma^T}^{-1}}^2 \eqdef \inf_{\lambda: y = \sigma \lambda } \abs{\lambda}^2 .
$$
% and $I_{[0,T]}(\pa{x}):=\infty$ if $\pa{x}$ is not in $H^1([0,T],\R^m)$.
The associated quasi potential is then given by:

\[
 U(x,y) = \inf_{ \pa{x}_0=x,\pa{x}_1=y} \int_{0}^1 \inf_{\lambda \in \R_\ast}\norm{\lambda \frac{\d\pa{x}_\theta}{\d \theta} - \frac{1}{\lambda}b(\pa{x}_\theta)}_{  \p{\sigma\sigma^T}^{-1} (\pa{x}_\theta) }^2  \dd \theta .
\]

\section*{Acknowledgement} This work has been partially supported by ANR SINEQ, ANR-21-CE40-0006.

\bibliographystyle{plain}
\bibliography{biblio}

\end{document}